\newtheorem{theorem}{Theorem}[section]
\newtheorem{lemma}[theorem]{Lemma}
\newtheorem{proposition}[theorem]{Proposition} 
\newtheorem{corollary}[theorem]{Corollary}
\newtheorem{remark}[theorem]{Remark}
\newtheorem{definition}{Definition}[section]
\newtheorem{example}[theorem]{Example}
\numberwithin{equation}{section}
\begin{document}

\newcommand{\tmname}[1]{\textsc{#1}}
\newcommand{\tmop}[1]{\operatorname{#1}}
\newcommand{\tmsamp}[1]{\textsf{#1}}
\newenvironment{enumerateroman}{\begin{enumerate}[i.]}{\end{enumerate}}
\newenvironment{enumerateromancap}{\begin{enumerate}[I.]}{\end{enumerate}}

\newcounter{problemnr}
\setcounter{problemnr}{0}
\newenvironment{problem}{\medskip
  \refstepcounter{problemnr}\small{\bf\noindent Problem~\arabic{problemnr}\ }}{\normalsize}
\newenvironment{enumeratealphacap}{\begin{enumerate}[A.]}{\end{enumerate}}
\newcommand{\tmmathbf}[1]{\boldsymbol{#1}}

\def\paral{/\kern-0.55ex/}
\def\parals_#1{/\kern-0.55ex/_{\!#1}}
\def\bparals_#1{\breve{/\kern-0.55ex/_{\!#1}}}
\def\n#1{|\kern-0.24em|\kern-0.24em|#1|\kern-0.24em|\kern-0.24em|}

\newcommand{\A}{{\bf \mathcal A}}
\newcommand{\B}{{\bf \mathcal B}}
\def\C{\mathbb C}
\newcommand{\D}{{\rm I \! D}}
\newcommand{\dom}{{\mathcal D}om}
\newcommand{\pathR}{{\mathcal{\rm I\!R}}}
\newcommand{\Nabla}{{\bf \nabla}}
\newcommand{\E}{{\mathbf E}}
\newcommand{\Epsilon}{{\mathcal E}}
\newcommand{\F}{{\mathcal F}}
\newcommand{\G}{{\mathcal G}}
\def\g{{\mathfrak g}}
\newcommand{\HH}{{\mathbb H}}
\def\h{{\mathfrak h}}
\def\k{{\mathfrak k}}
\newcommand{\I}{{\mathcal I}}
\def\LL{{\mathbb L}}
\def\law{\mathop{\mathrm{ Law}}}
\def\m{{\mathfrak m}}
\newcommand{\K}{{\mathcal K}}
\newcommand{\p}{{\mathfrak p}}
\newcommand{\R}{{\mathbb R}}
\newcommand{\Rc}{{\mathcal R}}
\def\T{{\mathcal T}}
\def\M{{\mathcal M}}
\def\N{{\mathcal N}}
\newcommand{\pnabla}{{\nabla\!\!\!\!\!\!\nabla}}
\def\X{{\mathbb X}}
\def\Y{{\mathbb Y}}
\def\L{{\mathcal L}}
\def\1{{\mathbf 1}}
\def\half{{ \frac{1}{2} }}
\def\f{\frac}
\def\vol{{\mathop {\rm vol}}}

\def\ad{{\mathop {\rm ad}}}
\def\Conj{{\mathop {\rm Ad}}}
\def\Ad{{\mathop {\rm Ad}}}
\newcommand{\const}{\rm {const.}}
\newcommand{\eg}{\textit{e.g. }}
\newcommand{\as}{\textit{a.s. }}
\newcommand{\ie}{\textit{i.e. }}
\def\s.t.{\mathop {\rm s.t.}}
\def\esssup{\mathop{\rm ess\; sup}}
\def\Ric{\mathop{\rm Ric}}
\def\div{\mathop{\rm div}}
\def\grad{\mathop{\rm grad}}
\def\ker{\mathop{\rm ker}}
\def\Hess{\mathop{\rm Hess}}
\def\Image{\mathop{\rm Image}}
\def\Dom{\mathop{\rm Dom}}
\def\id{\mathop {\rm Id}}
\def\Image{\mathop{\rm Image}}
\def\Cyl{\mathop {\rm Cyl}}
\def\Conj{\mathop {\rm Conj}}
\def\Span{\mathop {\rm Span}}
\def\trace{\mathop{\rm trace}}
\def\ev{\mathop {\rm ev}}
\def\Ent{\mathop {\rm Ent}}
\def\tr{\mathop {\rm tr}}
\def\graph{\mathop {\rm graph}}
\def\loc{\mathop{\rm loc}}
\def\so{{\mathfrak {so}}}
\def\su{{\mathfrak {su}}}
\def\u{{\mathfrak {u}}}
\def\o{{\mathfrak {o}}}
\def\pp{{\mathfrak p}}
\def\gl{{\mathfrak gl}}
\def\hol{{\mathfrak hol}}
\def\z{{\mathfrak z}}
\def\t{{\mathfrak t}}
\def\<{\langle}
\def\>{\rangle}
\def\span{{\mathop{\rm span}}}
\def\diam{\mathrm {diam}}
\def\inj{\mathrm {inj}}
\def\Lip{\mathrm {Lip}}
\def\Iso{\mathrm {Iso}}
\def\Osc{\mathop{\rm Osc}}
\renewcommand{\thefootnote}{}

\def\paral{/\kern-0.55ex/}
\def\parals_#1{/\kern-0.55ex/_{\!#1}}
\def\bparals_#1{\breve{/\kern-0.55ex/_{\!#1}}}
\def\n#1{|\kern-0.24em|\kern-0.24em|#1|\kern-0.24em|\kern-0.24em|}
\newenvironment{proof}{
 \noindent\textbf{Proof}\ }{\hspace*{\fill}
  \begin{math}\Box\end{math}\medskip}

\author{Xue-Mei Li}
\date{}
\title{Homogenisation on Homogeneous Spaces}
\institute{
Department of Mathematics, Imperial College London, London SW7, 2AZ, U.K.}
 
\maketitle

\begin{abstract}

Motivated by collapsing of Riemannian manifolds and inhomogeneous scaling of left invariant Riemannian metrics on a real Lie group~$G$ with a sub-group~$H$,
we introduce a family of interpolation equations on $G$ with  a parameter $\epsilon>0$, interpolating  
 hypo-elliptic diffusions on $H$ and translates of exponential maps on $G$ and examine the dynamics as $\epsilon\to 0$.  When  $H$ is compact, we use
the reductive homogeneous structure of Nomizu to extract a converging family of stochastic processes (converging on the time scale~$\f 1 \epsilon$), proving  the convergence of the  stochastic dynamics on the orbit spaces $G/H$ and their parallel translations, providing also an estimate on the rate of the convergence in the Wasserstein distance. 
  Their limits are   not necessarily Brownian motions and are classified algebraically by a Peter Weyl's theorem for real Lie groups and geometrically using a weak notion of the
 naturally reductive property; the classifications allow to conclude the Markov property of the limit process. This can be considered
as ``taking the adiabatic limit'' of the differential operators $\L^\epsilon=\f 1 {\epsilon} \sum_k (A_k)^2+ \f 1{\epsilon} A_0+ Y_0$  where $Y_0, A_k$ are left invariant vector fields  and $\{A_k\}$ generate the Lie-algebra of~$H$.
\end{abstract}
\\[1em]

{\small AMS Mathematics Subject Classification : 60Gxx, 60Hxx, 58J65, 58J70} 
\\
{Key words: stochastic averaging, diffusion creation, adiabatic  limit, H\"ormander's conditions, classification of  effective dynamics.

\tableofcontents

\section{Introduction} 

 By deforming the fibres of the Hopf fibration, the canonical round metric on the Lie group $S^3$ gives rise to a family of left invariant Berger's metrics which we denote by $m_\epsilon$. As $\epsilon$ approaches zero, the Riemannian manifolds
 $(S^3, m_\epsilon)$ converge to the lower dimensional two sphere $S^2({1\over 2})$ of radius ${1\over 2}$, while keeping the sectional curvatures  bounded.   See  \cite[J. Cheeger, M. Gromov]{Cheeger-Gromov},  \cite[K. Fukaya]{Fukaya87}, and  \cite[A. Kasue, H. Kumura]{Kasue-Kumura} for other types of convergences of Riemannian manifolds.
  Let us denote by 
 $\Delta_{S^3}^\epsilon$ and $\Delta_{S^1}$ the Laplacians on $(S^3, m_\epsilon)$ and on $S^1$ respectively, and also
 denote by $\Delta^h$ the horizontal Laplacian identified with the Laplacian on $S^2(\f 12)=S^3/S^1$.
These operators commute and $\Delta^\epsilon _{S^3} ={1\over \epsilon}\Delta_{S^1} + \Delta^h$. If $\{X_1, X_2, X_3\}$ are the Pauli matrices, identified with left invariant vector fields, then $\Delta_{S^1}=(X_1)^2$, $\Delta^h=(X_2)^2+(X_3)^2$.
 As $\epsilon$ approaches $0$, any eigenvalues of the Laplacian $\Delta^\epsilon _{S^3} $ coming from a non-zero eigenvalue of $\f 1 \epsilon  \Delta_{S^1} $ is pushed to the back of the spectrum and an eigenfunction of $\Delta^\epsilon_{S^3}$, not constant in the fibre, 
flies away. In other words the spectrums of ${S^3}$ converge to that of $S^2$,  see \cite[S. Tanno]{Tanno-79},  \cite[L. B\'erard-Bergery and J. -P. Bourghignon]{Berard-Bergery-Bourguignon} \cite[H. Urakawa]{Urakawa86} for discussions on the spectrum of Laplacians on spheres, on homogeneous Riemannian manifolds and on Riemannian submersions with totally geodesic fibres. 

Another interesting family of operators is  $\{\gamma \Delta_{S^1}+\delta Y_0: \gamma>0, \delta>0\}$
where $Y_0=aX_2+bX_3$ is a non-zero unit length left invariant vector field.  If $\gamma=0$, the time evolutions associated with the operators
are geodesics on $S^2$; if $\delta=0$ the time evolutions with initial values in $H$ are scaled Brownian motions on $H$.
Let us take $\gamma\to \infty$ and keep $\delta=1$. In other words we consider
$\L^\epsilon=\f 1 {\epsilon}\Delta_{S^1}+Y_0$ where $\epsilon>0$. Unlike  $\Delta^\epsilon _{S^3}$, the summands of $L^\epsilon$
do not commute, it is nevertheless easy to see that the orbits of the corresponding random evolutions exhibit interesting asymptotics.
For this observe that the Hopf map on the Hopf fibration is a conservation law,  this allows us to construct a converging family of first order random linear differential operators $\tilde\L^\epsilon$ ( the evaluations of the Hopf map of  the evolutions of $L^\epsilon$ and $\tilde\L^\epsilon$ agree)
such that their time evolutions converge to a Markov process with effective limit  $\lambda \Delta^h$ for an explicit constant $\lambda$. See Example \ref{Hopf-theorem}, and also see \cite[Li]{Li-OM-1} where a related family of operators were studied.   We study such phenomena on  homogeneous manifolds.  

Let $G$ be a smooth connected real Lie group, not necessarily compact, with a non-trivial closed proper subgroup $H$. Denote by $\g$ and $\h$ their respective Lie algebras.  We  assume that $H$ is connected, and compact for certain type of results and in this introduction. 
 If $H$ is compact, there exists an $\Ad_H$-invariant inner product on $\g$ which descends to $T_oM$ and induces a $G$-invariant Riemmanian metric on $M$. 
 
 For a positive real number $\epsilon$, let us scale the inner product on $\h$ by a number $\epsilon>0$. Let $\{X_1, \dots, X_p, X_{p+1}, \dots, X_n\}$ be an orthonormal basis of $\g$ where $\{X_j, j\le p\}$ is a basis of $\h$.
  By declaring the left invariant vector fields $\{\f 1 {\sqrt{\epsilon}} X_1, \dots, \f 1 {\sqrt{\epsilon}} X_p, X_{p+1}, \dots, X_n\}$ an orthonormal frame we obtain a family of Riemannian metrics. Let us consider the sum of the squares of vector fields operators, arising from the non-homogeneous scaling,
 $$ \L^\epsilon=\f 1 {\epsilon}\L_0+ Y_0, \quad \text{where} \quad  \L_0= \f 12\sum (A_k)^2+ A_0,$$
 $\{A_k\in \h\}$ are bracket generating left invariant vector fields, and $Y_0$ is a left invariant vector field to be specified later. Similar to the Hopf fibration, a family of interpolation equations can be associated with $\L_0$ and $Y_0$. We study the asymptotics of  the family of operators $ \L^\epsilon$ on $[0,1]$ and on $[0, \f 1\epsilon]$. Observe that $\L_0$ is not necessarily symmetric nor hypoelliptic in $G$.

 We first consider $\L^\epsilon$ as a perturbation to the hypoelliptic diffusions induced by $\f 1 {\epsilon}\L_0$.  The hypoelliptic diffusion processes stay in the orbits/fibres of their initial values while the perturbation in $Y_0$ direction induces motions transversal to the fibres, describing a conservation law.
Our first task is to use it to understand the nature of the perturbation and to extract from them a family of first order random differential operators, $ \tilde \L^\epsilon$,   called the `slow motions'. If the slow motions converge to a fixed point, we study the dynamics on the longer time scale $[0, \f 1 \epsilon]$. On the Riemannian homogeneous manifold, for $Y_0$ appropriately chosen, the `effective limit' on $[0, 1]$ is either a non-degenerate ODE or a fixed point. In the latter case and on the scale of $\f 1 \epsilon$, we would however treat the term involving $\L_0$ as a perturbation. It is counter intuitive to consider the dominate part as perturbation to the smaller part, but the perturbation, although very large in magnitude, is fast oscillating. The large oscillating motion will be averaged out, leaving an effective stochastic motion corresponding to a second order differential operator on~$G$.

The singular perturbation problem described above can  be considered as a stochastic homoganization problem for the following family of stochastic differential equations (SDEs) on~$G$,
\begin{equs}
\label{1}dg_t^\epsilon ={1\over \sqrt \epsilon} \sum_{k=1}^N   A_k(g_t^\epsilon) \circ db_t^k +{1\over \epsilon} A_0(g_t^\epsilon)dt
+Y_0(g_t^\epsilon) dt, \quad g_0^\epsilon =g_0,
\end{equs}
where $\circ$ denotes Stratonovich integrals.
These SDEs  belong to a family of equations, see \S \ref{section-interpolation},  which interpolate between translates of  one parameter subgroups of $G$ and hypoelliptic diffusion on  $H$. Scaled by $1/\epsilon$, the Markov generator of  $(g_{t/\epsilon}^\epsilon)$ is precisely~$\f 1\epsilon \L^{\epsilon}$.   

One of our tools is a {\it reductive structure}, in the sense of Nomizu \cite{Nomizu54},  i.e. an $\Ad_H$-invariant sub-space $\m$ of $\g$ complementing $\h$, not in the sense of  having a completely irreducible adjoint representation.  Such a complement $\m$ exists if $H$ is compact,  see  \S\ref{section2} for further discussion where a non-Riemannian example is also given. If $Y_0$ belongs to
 a non-trivial  irreducible invariant subspace $\m_k$ of $\m$,  the time scale for taking the limit is determined by $\m_k$, see \S\ref{section-reduction}-\S \ref{sectionConditions}.  Indeed, the effective limits  can  be classified using an orthogonal decomposition of $\m$ into invariant subspaces, see \S \ref{section5}-\S\ref{section8}.

   The reductive structure allows us to use the projection $\pi:G\to G/H$ as a `conservation law', the projected process is a slow variable on the orbit space, and 
  $(g_t^\epsilon)$ is a {\it perturbation to the conservation law} and we expect that the variable in the orbit space converge over a long time scale (equation (\ref{1})  is already on this large time scale.  To separate the slow and the fast variables, we `horizontally lift back' the projected curves and obtain a slow variable on the total space $G$, the fast variable is a diffusion process on the subgroup $H$. The `horizontal lifts' of the projected processes are random smooth curves driven by random ODEs whose generator will be denoted by $\tilde \L^\epsilon$, for almost surely all sample paths they are parallel translations along the projection. It is easy to see that they converge to the solution of an ODE: this is the averaging principle. We then identify those $Y_0$ for which the limiting ODE is trivial and in particular the projections of their solution curves are fixed points in the orbit space $G/H$. For such $Y_0$ we study the dynamics in a longer time interval  and conclude that 
   the family of random differentiable curves, determined by the family of random ODEs, converge to the sample paths of a non-trivial Markov process, i.e. diffusion reaction, the proof for this convergence is  based on a theorem in \cite{Li-limits}; we do however need to take care of the non-compactness of $G$ which is not entirely covered by  \cite{Li-limits}. In terms of the stochastic processes on the orbit manifolds, we prove their convergence on the interval of size $[0, \f \1 \epsilon]$  together with the convergence of their horizontal lifts, i.e.  parallel translations along them converge to the stochastic parallel translation along the limiting diffusion process. The question  whether or not the latter is a Markov process, on its own, will also be studied.
The convergence will be in terms of the probability distributions induced by $\f \1 \epsilon \tilde \L_\epsilon$ in the weak topology
and Wasserstein distance on the space $C([0,1]; G)$ of continuous paths over $G$ and over $G/H$, see \S\ref{section-convergence}, where the rate of convergence in the Wasserstein distance for their evaluations at specific times is  given.

A heuristic argument based on multi-scale analysis, which we give shortly, appears to suggest the following  centring condition:
 $\int_H Y_0F_0 dh=0$ where $F_0$ is a solution to the effective parabolic equation and where $dh$ is the right invariant Haar measure on $H$. This is not quite the right assumption, we will assume instead that 
 $Y_0\not =0$,  $\bar Y_0\equiv \int_H \Ad(h)(Y_0)dh=0$. 
 Indeed, we show that  $\f \1 \epsilon \tilde \L_\epsilon=\Ad(h_{\f t\epsilon}(\omega)) (Y_0)$, where $(h_t)$ is
a stochastic process in $H$; and that $\bar Y_0=0$ precisely when  $Y_0$ is orthogonal to $\h\oplus \m_0$, where $\m_0$ is the set of $\Ad_H$-invariant vectors in $\m$ (\S\ref{sectionConditions}).

{\it The heuristic Argument.}  Let us give the heuristics argument, made rigorous in \S\ref{section-reduction}-\S\ref{section-convergence}.  
For simplicity take $A_0=0$ and denote by $dh$ the normalised Haar measure on a compact $H$.
 Let $\L_0 ={1\over 2}\sum_{k=1}^p (A_k)^2$ and let $F: \R_+\times G\to \R$ be a solution to the evolution equation $\f {\partial F}{\partial t}  =\L^\epsilon F$. 
Expand $F$ in $\epsilon$, $F(t)=F_0(t)+ \epsilon F_1(t)+\epsilon^2 F_2(t)+ o(\epsilon^2)$,
and plug this into the parabolic equation. Equating coefficients of $\epsilon^0$ and $\epsilon^{-1}$ 
we obtain the following equations:
  $$  \L_0F_1+{Y_0}F_0=0, \quad \f {\partial F_0}{\partial t}  ={Y_0} F_1+\L_0F_2.$$
   If $ {Y_0} F_0$ averages to zero, the formal solution to the first equation is  $F_1=-\L_0^{-1}({Y_0}F_0)$.
We should interpret this as an equation on $H$, and will solve this equation explicitly.
The second equation reduces to
$\f {\partial F_0}{\partial t} =-{Y_0}(\L_0^{-1}({Y_0}F_0))+\L_0F_2$ which we integrate with respect to $dh$. Since $\L_0$ is symmetric 
by Lemma \ref{le-symmetric} and $\L_0F_2$ averages to zero.  We obtain the equation for the effective motion:
$${d\over dt} \int_H F_0 dh=-\int_H {Y_0} \L_0^{-1} ({Y_0}F_0) dh.$$ 
The computations are formal.  Firstly we neglected higher powers of $\f 1 \epsilon$, which are very large for $\epsilon$ small.  Secondly we assumed that the Poisson equation $\L_0 ={Y_0}F_0$ is solvable. Finally we assumed that
$\int_H \L_0 F_2 dh=0$. Observing that $\L_0$ is not hypoelliptic on $G$, we must reduce the system to a space where it is, to justify these assertions.

{\it Main result.}   We assume that $\m$ is a reductive structure in the sense of Nomizu, a list of useful reductive decompositions are given in \S \ref{section2}. Denote by $\Ad_H: H \to \LL(\g;\g)$ the restriction of the adjoint of $G$ to $H$ and also the restricted representation $\Ad_H: H \to \LL(\m;\m)$. Denote by $\m_0\subset \m$  the space of invariant vectors of $\Ad_H$. Let $\m=\m_0\oplus \m_1\oplus \dots \oplus \m_l$ be an invariant decomposition for $\Ad_H$ where, for each $l\not =0$,  $\m_l$ is an irreducible invariant space. If $H$ is compact we may and will assume that the decomposition is orthogonal.
  The main results are in three parts. 
\begin{enumerate}
\item [(1)]   {\it Separation of slow and fast variables and Reduction.} Use the Ehresman connection on the principal bundle  $\pi: G\to G/H$, determined by $\m$,  and horizontal lifts of curves from $G/H$ to $G$ we deduce stochastic  equations
for the horizontal lifts $\tilde x_t^\epsilon$ of $x_t^\epsilon=\pi(g_t^\epsilon)$, the slow motions, 
 and for the fast motions $h_t^\epsilon$ on $H$. % Corollary~\ref{corollary3.5}.

\item  [(2)]  {\it Convergence of the slow components.} 
If $Y_0$ is orthogonal to $\m_0$ and $\{A_0, A_1, \dots, A_N\}$ generates $\h$, the stochastic processes
$ (\tilde x_{t\over \epsilon}^\epsilon, t\in [0,T])$ and $(x_{t\over \epsilon}^\epsilon, t\in [0,T])$ converge weakly, as
 $\epsilon\to 0$, to $\bar u_t $ and $\bar x_t$ respectively, in the weak topology on the path space over $G$ and in Wasserstein distance. A rate of convergence is given. %Theorem \ref{convergence-and-limit}

\item [(3)] {\it Effective Process.} Assume $A_0=0$ and $Y_0\in \m_l$. The effective process on $G$ is a Markov process
with generator  $c\Delta_{m_l}$. If furthermore $M$ is isotropy irreducible and if $M$ is naturally reductive 
(and more generally  ) then $\bar x_t$ is a scaled Brownian motion whose scale is computable.
\end{enumerate}

We indicate the problems pertinent to part (3).
The reduced first oder random differential operators give rise to second order differential operators by the action of the Lie brackets.
 If the Lie bracket between $A_k$ and $Y_0$ is not trivial, randomness is generated in the $[A_k,Y_0]$ direction and transmitted from the vertical  
to the horizontal directions. We ask the question whether the limiting operators are scaled horizontal Laplacians.
 This is so for the Hopf fibration.
However it only takes a moment to figure out this cannot be always true. 
The noise cannot be transmitted to directions in an irreducible invariant subspace of $\m$ not containing $Y_0$.
If $\m_l$ is the irreducible invariant subspace containing $Y_0$,
the action of $\Ad_H$  generates directions in $\m_l$,  not any direction in a component complementary to $\m_l$ and so the rank of $\bar \L$ is at most $\dim(\m_l)$. 
Within an irreducible $\Ad_H$-invariant subspace $\m_l$, the transmission of the noise should be `homogeneous'
and we might expect that there is essentially only one, up to scalar multiplication,  candidate effective second order differential operator: the generalised Laplacian' operator' $\Delta_{\m_l}:=\sum L_{Y_i}L_{Y_i}$ where $\{Y_i\}$ is an orthonormal basis of $\m_l$.
The generalised Laplacian $\Delta_{\m_l}$ is independent of the choice of the basis.

 If $\{A_1, \dots, A_p\}$ is a basis of $\h$, we solve a Poisson equation and prove that $\bar \L=\sum_{i,j}a_{i,j}(Y_0)L_{Y_i}L_{Y_j}$, where $\{Y_i\}$ is a basis of the irreducible $\Ad_H$ invariant space $\m_l$ containing $Y_0$ and $a_{i,j}(Y_0)$ trigonometric functions of the adjoint sub-representation in this basis. It  can be written in terms of eigenvalues of $\L_0$, computable from the `Casimir', and has an $\Ad_H$-invariant form. If $Y_0\in \m_l$ then  by the real Peter-Weyl theorem, a proof of which by Dmitriy Rumynin is appended at the end of the paper, we see that $\bar \L={1\over \dim(\m_l)\lambda_l}|Y_0|^2\Delta_{\m_l}$.  In case $G$ is uni-modular and $\m$ is irreducible this is the `horizontal Laplacian'. 
    If $\{A_k\}$ is only a set of generators of the Lie algebra $\h$,  a similar formula holds with the constant $\lambda_l$ replaced by a constant $\lambda(Y_0)$, depending on $Y_0$ in general.  

 The above descriptions are algebraic, in \S\ref{section8} we discuss their differential geometric counterpart. Let $G$ be endowed with
a left invariant and $\Ad_H$-invariant  Riemannian metric and let $M=G/H$ be given the induced $G$-invariant Riemannian metric. The translates of the one parameter subgroups of $G$ are not necessarily geodesics for the Levi-Civita connection on $G$. Their projections to the Riemannian homogeneous manifold are not necessarily geodesics either. In general $\bar \L$ needs not be the Laplace-Beltrami operator even if it is elliptic for it may have a nontrivial drift. The more symmetries there are, the closer is the effective diffusion to a Brownian motion. A maximally symmetric and non-degenerate effective process ought to be a scaled Brownian motion.
   In other words we like to see the convergence of the random smooth curves in the orbit manifold to a scaled Brownian motion, which will be studied under the condition on the trace of $\ad(Z)$, under which
   $\trace_{\m_l} \nabla^L d=\trace_{\m_l} \nabla d$ and $\trace_{\m_l} \nabla^c d=\trace_{\m_l} \nabla d$ on $M$, where $\nabla$ denotes the Levi-Civita connections on $G$ and on $M$ and $\nabla^c$ denotes the canonical connection on $M$.

\medskip

{\it The context.} The study of parabolic differential equation of the type 
${\partial \over \partial t} =Y_0+\f 1 {\epsilon} \L_0$ where $\L_0$ is an elliptic operator and $Y_0$ a vector field, in the non-geometric settings goes back to S. Smoluchowski (1916) and to H. Kramers (1940) \cite{Kramers} and are known as  Smoluchowski-Kramers limits. This was taken up in \cite[E. Nelson]{Nelson} for unifying Einstein's theory and Ornstein-Uhlenbeck's theory of Brownian motions.  The further scaling by $1/\epsilon$, leading to the asymptotic problems for $\f 1 {\epsilon}Y_0+ \f 1 {\epsilon^2} \L_0$, are known by the a number of terminologies in a great many  subjects:  averaging principle, stochastic homogenization, multi-scale analysis, singular perturbation problem, or taking the adiabatic limit. They also appear in the study of interacting particle systems. We treat our problem as a perturbation to a non conventional conserved system and use techniques from multi-scale analysis and stochastic homogenization. 
 For perturbation to Hamiltonian systems see M. Freidlin \cite{Freidlin}, see also \cite[M. Freidlin and M. Weber ]{Freidlin-Weber}.  
The use of  `Hamiltonian'  is quite liberal, by which we merely mean that they are conserved quantities, here they are manifold valued functions or orbits. 
  Some ideas in this paper were developed from  \cite[X.-M. Li ]{Li-OM-1}. Convergences of Riemannian metrics from the probabilistic point of view, have been studied by N. Ikeda and Y. Ogura \cite{Ikeda-Ogura} and Y. Ogura and S. Taniguchi \cite{Ogura-Taniguchi96}. %See also \cite[M. Pinsky]{Pinsy-parallel}. 

Our reduced equations are ODEs on manifolds with random coefficients, the study of random ODEs on $\R^n$ goes back to
\cite[R.L. Stratonovich]{Stratonovich-rhs}, \cite[R.Z. Khasminski]{Khasminskii66},  \cite[W. Kohler and G. C. Papanicolaou]{Kohler-Papanicolaou74} and   \cite[G. C. Papanicolaou and S. R. S. Varadhan]{Papanicolaou-Varadhan73},
 \cite[A. N. Borodin and M. Freidlin]{Borodin-Freidlin}.  The convergence from operator semigroup point of view is studied in \cite[T. Kurtz]{Kurtz70}.
A collection of limit theorems, some of which are used here, together with a set of more complete references can be found in  \cite[X.-M. Li ]{Li-limits}.  %See \cite[H. Enriquez, J. Franchi and Y. LeJan]{Enriquez-Franchi-LeJan} for central Limit Theorems.  
For stochastic averaging with geometric structures, we refer to  \cite[X.-M. Li]{Li-averaging}  for a stochastic integrable Hamiltonian systems in symplectic manifolds, and to \cite[P. Gonzales-Gargate and  P. Ruffino]{Gonzales-Gargate-Ruffino} and \cite[M. Hogele and P. Ruffino]{Hogele-Ruffino} for studies of stochastic flows on foliated manifolds. 

 We also use  multi-scale analysis to separate the slow and fast variables and draw from the work in \cite[K. D. Elworthy, Y. LeJan and X.-M. Li]{Elworthy-LeJan-Li-book-2}. Our reduction in complexity bears resemblance to aspects of geometric mechanics and Hamiltonian reduction, see \cite[L-C. Joan-Andreu and O. Juan-Pablo]{Joan-Andreu-Juan-Pablo}, also \cite[D. Holm, J. Marsden, T. Ratiu, A. Weinstein]{Holm-Marsden-Ratiu-Weinstein}, and  \cite[J. Marsden, G. Misiolek,  J.-P. Ortega, M. Perlmutter and T. Ratiu]{M-M-O-P-R}.
The problem of identifying limit operators is reminiscent of that in M. Freidlin and A. D. Wentzell \cite{Freidlin-Wentzell-93} where  an explicit Markov process on a graph was obtained from the level sets of a Hamiltonian function in $\R^2$, but they are obviously  very different in~nature.

  The asymptotic of linear operators of the form $\f 1 \epsilon \L^\epsilon:=\f 1 {\epsilon^2}\L_0+\f 1 \epsilon \L_1$   are often referred as taking the adiabatic limits in some contexts, see for example see \cite[R.R. Mazzeo-R. B. Melrose]{Mazzeo-Melrose} and
  \cite[N. Berline, E. Getzler, and  M. Vergne]{Berline-Getzler-Vergne}.    These operators act not just on scalar functions and their objectives are motivated by problems in topology.  
     Motivated by analysis of loop spaces J.-M. Bismut  \cite{Bismut-Lie-group, Bismut-hypoelliptic-Laplacian}   studied `limits' of a family of hypoelliptic differential operators 
 on the tangent bundle  or the cotangent bundle of a compact Riemannian manifold $M$,
 see also  \cite[J.-M. Bismut and G. Lebeau]{Bismut-Lebeau} for an earlier work.  They are  devoted to study  the convergence of probability distributions of the projections of  hypoelliptic diffusions to Brownian motion. 
 Unlike in these studies  our state space is not necessarily compact,  the operators $\L^\epsilon$ are not necessarily hypoelliptic, 
 the projections of the effective limiting diffusions are not always  Brownian motions.
Classify the limits are one of our objectives.
In another direction, E. Nelson \cite{Nelson}, proved that the physical particles whose velocity are one  dimensional Ornstein-Uhlenbeck processes approximates, in high temperature,  one dimensional Brownian motions. This was generalised to manifolds in \cite[R. M. Dowell]{Dowell}.  
This study bears much resemblance to the above mentioned articles; however the model  and the objectives in \cite[Li]{Li-geodesic} are much closer: SDEs on the orthonormal frame bundle are used to show that  Brownian motions are generated by `geodesics' with rapidly changing directions.

\section{Notation,  preliminaries, and examples}
\label{section2}

If $g\in G$ we denote by $L_g$ and $R_g$ the left and right multiplications on $G$. Denote by $dL_g$ and $dR_g$  their differentials. The  differentials are also denoted by $TL_g$ and $TR_g$ where these are traditionally used. 
Denote by
 $\Ad$ the adjoint representation of $G$ with $\Ad_H$ its restriction to $H$ and $\ad$ the adjoint representation on $\g$ with restriction $\ad_\h$. Both the notation $\ad_XY$ and $[X,Y]$ are used.
If $\m$ is a subspace of $\g$, $g\m$ denotes 
the set of left translates of elements of $\m$ to $T_gG$. We identify the
set of left invariant vector fields with elements of the Lie algebra, denote them by the same letters or with 
the superscript~$*$. If $V$ is a vector field and $f$ function, both  $L_V f$  and $Vf$ are use to denote Lie differentiation in the direction $V$, the latter for simplicity, the first to avoid confusing with operators denoted by $\L$. 
Also, $\Delta_H$ denotes the Laplacian for a bi-invariant metric on the Lie group $H$.

A manifold $M$ is homogeneous if there is a transitive action by a Lie group $G$. It can be represented as the coset space $G/H$ where $H$ is the isotropy group at a point $o$. 
The coset space is given the 
unique manifold structure such that  $g H\in G/H\mapsto \bar L_g o \in M$ is a diffeomorphism where $\bar L_g$ denotes the action of $g$. If $\pi: G\to M$ denotes the projection taking $g$ to the coset $gH$ and $1\in G$ the identity, the sub-Lie algebra $\h$ is the kernel of $(d\pi)_1$ and $T_oM$ is isomorphic to a complement of $\h$ in $\g$.

 %A complement $\m$ of a sub-lie algebra $\h$ in $\g$ is said to be a {\it reductive structure}
 The homogeneous space $G/H$ is reductive if in the Lie algebra $\g$ there exists a subspace $\m$ such that
 $\Ad(H)(\m)\subset \m$ and $\g=\h\oplus \m$ is a vector space direct sum. By $\Ad_H(\m)$ we mean the image of $H$ under the adjoint map,  treated as linear maps restricted to $\m$. 
We say that $\g=\h\oplus \m$ is a {\it reductive decomposition} and $(\g, \h)$ a reductive pair. This implies that $[\h,\m]\subset \m$ and vice versa if $H$ is connected. 
The homogeneous space is also called reductive, a reductive property in the sense of Nomizu, a concept different from a Lie group being reductive.
In particular, the Lie group $G$ is not necessarily reductive, by which we mean its adjoint representation is completely reducible. 
%A smooth connected algebraic group is  reductive if its  its geometric unipotent radical is trivial.

We discuss briefly the connectedness of the Lie group. Firstly
 if $H$ and $G/H$ are connected, so is $G$. This follows from the fact that a topological space $X$  is connected if and only if every continuous function from $X$ to $\{0,1\}$ is constant. 
 The identity component  $G^0$ of $G$ is a normal subgroup of $G$, and any other
component of $G$ is a coset of $G^0$.  The component group $G/G^0$ is discrete.
If a Lie group $G$ acts transitively on a connected smooth manifold $M$, so does $G^0$. See \cite{Gorbatsevich-Onishchik-Vinberg}.
%Furthermore $G/G^0$ is isomorphic to $G_x/(G_x\cap G^0)$.
Our stochastic processes are continuous in time, and hence we may and will assume that both $G$ and $H$ are connected.

The existence of an $\Ad_H$invariant inner product is much easier than requesting an $\Ad$-invariant inner product
on $\g$ which is equivalent to $G$ is of compact type.  
If $H$ is compact, by the unitary trick,  there exists an $\Ad_H$-invariant inner product on $\g$ and a reductive structure by setting $\m=\h^\perp$. %see  Besse \cite[Sec.~7.22]{Besse}. 
The compactness of $H$ is not a restriction for a Riemannian homogeneous manifold.  If $G/H$ is a connected Riemannian homogeneous space and $G$ is connected then by a
  theorem of van Danzig and van der Waerden (1928), the isotropy isometry groups at every point is compact,  \cite[Kobayashi]{Kobayashi}. If $H$ is connected and its Lie algebra is reductive in $\g$, in the sense that $\ad(\h)$ in $\g$ is completely reducible, then $G/H$ is reductive.
The Euclidean space example below will cover the averaging model used in \cite[M. Liao and L. Wang ]{Liao-Wang-07}.
Let $G=E(n)$ be the space of rigid motions on $\R^n$,
$$E(n)=\left\{\left( \begin{matrix} R &v\\ 0&1\end{matrix}  \right): R\in SO(n), v\in \R^n\right\}$$
 and $H$ its subgroup of rotations.  Elements of $H$ fix the point $o=(0,1)^T$,  $E(n)/H=\{(x,1)^T, x\in \R^n\}$ and a matrix in $E(n)$ projects to its last column. We may take
$$ \h=\left\{\left( \begin{matrix} A &0\\ 0&0\end{matrix}  \right), A\in \so(n)\right\}, \;
 \m=\left\{\left( \begin{matrix} 0 &v\\ 0&0\end{matrix}  \right),  v\in \R^n \right\}.$$

A reductive structure may not be unique. For example, let $G$ be  a connected compact Lie group, $H=\{(g,g): g\in G\}$ and $\h=\{(X, X): X\in \g\}$. 
   Then $G=(G\times G)/H$ is a reductive homogeneous space in three ways:  $\m^0=\{(X,-X), X\in \g\}$,
$$ \m^+=\{(0,X), X\in \g\}, \quad     \quad \m^-=\{(X,0), X\in \g\}.$$
The first one,  $\h\oplus \m^0$,  is the symmetric decomposition. 

A definite metric is not necessary either.  If $\g$ admits an $\Ad_H$ invariant non-degenerate bilinear form such that its restriction to $\h$ is non-degenerate,  let
$\m=\{ Y\in \g: B(X, Y)=0, \forall X\in \h\}$.
In   \cite[Chap.~10]{Kobayashi-NomizuI}, Kobayashi and Nomizu considered the case where $B$ is $\Ad(G)$ invariant.  
Their proof can be modified to work here.

  \subsection{The Motivating Example}
\label{Bergers}

As a motivating example, we take $G=SU(2)$, $H=U(1)$ and the bi-invariant 
Riemannian metric such that
$\<A,B\>={1\over 2}\trace AB^*$ where $A, B\in \g$.  Let
\begin{equation}\label{pauli}
X_1=\left(\begin{matrix} i &0\\0&-i
\end{matrix}\right), \quad X_2=\left(\begin{matrix} 0 &1\\-1&0
\end{matrix}\right),
 \quad X_3=\left(\begin{matrix} 0 &i\\i&0
\end{matrix}\right)
\end{equation}
be the Pauli matrices. They form an orthonormal basis of the Lie algebra $\g$ with respect to the bi-invariant metric and $G$ is the canonical three sphere in $\R^4$.
The Lie algebra $\h$ is generated by $X_1$ and we take $\m$ to be the vector space generated by the remaining two Pauli matrices and obtain the family of Berger's metric, which we denote as before by $\{m^\epsilon, \epsilon>0\}$. %The manifolds $(G, 

 Let us first take the Brownian motions on Bergers' spheres. 
 A Brownian motion on $(S^3, m^\epsilon)$ is determined by the H\"ormander type operator
 $\Delta^\epsilon={1\over \epsilon} (X_1)^2+ \sum_{i=2}^3 (X_i)^2$.  Although no longer associated with the round metric,   there are many symmetries in the following SDEs, 
 \begin{equation}
 \label{sphere-1}
 dg_t={1\over \sqrt \epsilon} X_1(g_t)\circ db_t^1+X_2(g_t)\circ db_t^2
+X_3(g_t)\circ db_t^3.
\end{equation}
 In particular the probability distributions of its slow components at time $t$
 are independent of $\epsilon$, see Example \ref{Berger's-sphere} in Section \ref{section-reduction}. Breaking up the symmetry we may consider the equation,
  $$dg_t={1\over \sqrt \epsilon} X_1(g_t)\circ db_t^1+X_2(g_t)\circ db_t^2,$$
  in which the incoming noise is 2-dimensional and its Markov generator satisfies the strong H\"ormander's conditions.  We will go one step further and use a one dimensional noise.
Let $(b_t)$ be a real valued Brownian motion. 
 If $Y_0=c_2X_2+c_3X_3$, where $c_2, c_3$ are numbers not simultaneously zero, the infinitesimal generator of the equation
 \begin{equation}
\label{sde0}
dg_t={1\over \sqrt \epsilon} X_1(g_t)\circ db_t+(Y_0)(g_t) dt,
\end{equation}
satisfies weak H\"ormander's conditions. Indeed by the structural equations, $[Y_0, X_1]=2c_2X_3-2c_3X_2$, and the matrix   
   $$\left(
\begin{array}{ccc}
{1\over \sqrt \epsilon}&0&0\\
0  &c_2 &-2c_3\\ 0&c_3&2c_2
\end{array}	\right).$$
 is not degenerate. In general, equation (\ref{1}) need not  satisfy H\"ormander's condition. 
This example is concluded in Example \ref{Hopf-theorem}, using Corollary \ref{corollary-symmetric}.

    \section{The interpolation equations}
    \label{section-interpolation}

Given a left invariant Riemannian metric on $G$, we consider 
 a family of non-homogeneously scaled Riemannian metrics.  To define these let $\{X_1, \dots, X_n\}$ be an orthonormal basis of $\g$ extending an orthonormal basis $\{X_1, \dots, X_p\}$ of $\h$ and let $$E^\epsilon=\left\{{1\over \sqrt \epsilon} X_1^*, \dots, {1\over \sqrt \epsilon} X_p^*, X_{p+1}^*, \dots X_n^*\right\}.$$ 
 The superscript $*$ above a letter denotes the corresponding left invariant vector field which we will omit, from time to time,  in favour of simplicity. The dual frame of $E^\epsilon$ defines a  family of left invariant Riemannian metrics on $G$ which are denoted by $m^\epsilon$, then $E^\epsilon$ is an orthonormal frame.  
  In this article we are not concerned with
 the problem of keeping the sectional curvatures bounded, and $G$ needs not be compact.

  Let $(\Omega, \F, \F_t, P)$ be a filtered probability space satisfying the usual assumptions 
 and  $(b_t^k, k=1,\dots, N,)$ a family of independent real valued Brownian motions. Let  $\gamma,   \delta$ be positive real numbers, $X_k \in \h$ as above, and  $Y_0\in \g$. 
 We study the stochastic dynamics associated with the above inhomogeneous scalings of Riemannian metrics and propose the following interpolation equation that,  in the limit of $\epsilon\to 0$, describes the `effective motion'  across the `orbits'.
 \begin{equation}\label{1-2}
dg_t =\sum_{k=1}^p  \gamma X_k(g_t) \circ db_t^k +
\delta Y_0(g_t) dt,
\end{equation}
with a given initial value $g_0$. Here $\circ$ denotes Stratonovich integration.  Their solutions are Markov processes whose probability laws are determined by the fundamental solutions to the following parabolic equation 
$\f \partial {\partial t}=\f 1 2\gamma^2\sum_{k=1}^p(X_k)^2+\delta Y_0$. 

For $\gamma=1$ and ${\delta }={1\over |Y_0|}$,
  these equations are driven by unit length vector fields on the Riemannian manifold $(G,m^\epsilon)$. 
If $\delta=0$ the solution with its initial value the identity of the group is a scaled Brownian motion on the subgroup $H$. 
If $\gamma=0$, the solutions are translates of the one parameter family subgroup generated by $Y_0$. We denote the vector space generated by $\{ X_{p+1}, \dots, X_n\}$ by $\m$. We assume in addition that $\m$ is $\Ad_H$-invariant. If $Y_0\in \m$, these one parameter families, $\exp(\delta tY_0)$, are horizontal curves for the horizontal distribution determined by $\m$. 
The solutions of  (\ref{1-2}) interpolate between translates of the one parameter group, generated by $\delta Y_0$, on $G$ and Brownian motion on $H$. The question is:  if we take $\gamma\to \infty$ while keeping $\delta$ fixed,  what can we say about the solutions of these equations?

We will work on more general operators, allowing $\{A_k\}$ to be a Lie algebra generating subset of $\h$ instead of assuming ellipticity. Take $Y_0\in\m$. We study the following family of~SDEs, where $\epsilon>0$ is a small parameter,
 \begin{equation*}
dg_t^\epsilon ={1\over \sqrt \epsilon} \sum_{k=1}^N   A_k(g_t^\epsilon) \circ db_t^k +{1\over \epsilon} A_0(g_0^\epsilon)dt
+Y_0(g_t^\epsilon) dt, \quad g_0^\epsilon =g_0.
\end{equation*}
The condition $g_0^\epsilon=g_0$, independent of $\epsilon$, is assumed only for the simplicity of the statements.
Let $\L_0=\f 12 \sum (A_k)^2+ A_0$.
The corresponding parabolic problems are:
$$\f \partial {\partial t}=\f 1 \epsilon  \L_0+Y_0, \quad \quad \f \partial {\partial t}=\f 1 {\epsilon^2} \L_0+\f 1 {\epsilon}Y_0.$$

For intuition, let us  review the theory for randomly perturbed Hamiltonian dynamics.
If $H$ is a function on $R^{2n}$, it is a first integral for the following Hamiltonian system: $\dot q=-{\partial H \over \partial p},  \dot p={\partial H \over \partial q}$. Let $x^\epsilon(t):=(q^\epsilon(t), p^\epsilon(t))$ denote solutions to a perturbed Hamiltonian system, which we do not specify, then $H^\epsilon(t):=H(p^\epsilon(t), q^\epsilon(t))$ varies slowly with $t$ on $[0,1]$. Under suitable mixing conditions on the perturbation the stochastic processes $H^\epsilon(t/\epsilon)$ converge, see e.g. 
 \cite[M. Freidlin and A. D. Wentzel]{Freidlin-Wentzell-93} and    \cite[V. I. Arnold]{Arnold89}.
 Although we do not have a Hamiltonian system, the projection $\pi:G\to G/H$ is a {\it conservation law}
  for an `unperturbed' dynamical system, which is the key for the reduction to the slow varying dynamics.  If
$(y_t^\epsilon)$ are solutions to the equations $dy_t^\epsilon ={1\over \sqrt \epsilon} \sum_{k=1}^N   A_k(y_t^\epsilon) \circ db_t^k+{1\over \epsilon} A_0(y_t^\epsilon) $, where $A_k\in \h$, then $\pi(y_t^\epsilon)=\pi(y_0^\epsilon)$ for all $t$.
 The orbits of $(g^\epsilon_t)$ are perturbations to the orbits of the vertical motions $(y_t^\epsilon)$. The `constant of motion' for the latter takes its value in the orbit manifold $G/H$.  There will be of course extra difficulties, which is mainly due to the fact that our slow motions are not necessarily local functions of $(g_t^\epsilon)$, they depend not simply on $(g_t^\epsilon)$ but also on its whole trajectory.  

For $\epsilon$ small, we would expect $\pi(g_t^\epsilon)$ to measure its deviation from the `orbit' containing $g_0^\epsilon$. 
The inherited non-linearity from $\pi$ causes some technical problems, for example the homogeneous manifold is in general not prarallelisable, so it is not easy to work directly with $x_t^\epsilon=\pi(g_t^\epsilon)$. 
The operator ${1\over \epsilon }\L_0+ Y_0$
does not satisfy H\"ormander's conditions, so we do not 
wish to work directly with $(g_t^\epsilon)$ either.
To overcome these difficulties we use an $Ad_H$-invariant decomposition of $\g$, with which we construct a stochastic process $(u_t^\epsilon)$ on $G$, having the same projection as $(g_t^\epsilon)$. 
Since $\m$ is $\Ad_H$ invariant,  $G$ is a principal bundle over $M$ with structure group 
$H$. We lift $(x_t^\epsilon)$ to $G$ to obtain a `horizontal' stochastic process $(\tilde x_t^\epsilon)$ covering $(x_t^\epsilon)$. 
The horizontality is with respect to the Ehresmann connection determined by $\m$. Then $(u_t^\epsilon)$ is the horizontal lift process. % See Corollary \ref{corollary3.5}.
 As  `perturbations' to the random motions on the fibres, the horizontal lifts $(\tilde x_t^\epsilon)$ describe transverse motions across the fibres.

 This consideration has a bonus:
in addition to asymptotic analysis of the $x$ processes, we also obtain information on the asymptotic properties of their horizontal lifts. This is more striking if we get out of the picture of homogenisation for a moment, and consider instead the three dimensional Heisenberg group
 as a fibre bundle over $\R^2$. The horizontal lift of an $\R^2$-valued Brownian motion to the Heisenberg group is their stochastic L\'evy area, c.f. \cite{Li-OM-1}. 
 Horizontal lifts of stochastic processes are standard tools in the study of Malliavin calculus in association with the study of the space of continuous paths over a Riemannian manifold. In \S\ref{section-reduction} below we deduce an explicit equation for the horizontal lift in terms of the vertical component of $(g_t^\epsilon)$, making further analysis possible.

    \section{Reduction, separation of slow and fast variables}
   \label{section-reduction}
   The sub-group $H$ acts on $G$ on the right by group multiplication and we have a principal bundle structure $P(G,H,\pi)$ with base space $M$ and structure group $H$. Each fibres $\pi^{-1}(x)$ is diffeomorphic to $H$ and the kernel of $d\pi_g$, the differential at $g$, is $g\h=\{ TL_g( X): X\in \h\}$
For $a\in G$, denote by $\bar L_a$ the left action on $M$:  $\bar L_a( gH)= agH$.  Then    $\pi\circ L_a=\bar L_a \circ\pi$
and  $(d\pi)_adL_a|_{T_1G}=(d\bar L_a)(d\pi)_1$ where $1$ denotes the unit element of $G$. 

In this section we assume that $H$ contains no normal subgroup of $G$, which is equivalent to $G$ acts on $G/H$ effectively, i.e. an element of $G$ acting as the identity transformation on $G/H$ is the identity element of $g$. This is not a restriction on the homogeneous space. If $G$ does not act transitively on $M=G/H$, then there exists a normal subgroup $H^0$ of $H$ such that
$G/H^0$ acts transitively on $G/H=(G/H^0)/(H/H^0)$. 
We identify $\m$ with $T_oM$ as below:
 $$X\mapsto  (d\pi)_1 (X)= {d\over dt}\big|_{t=0} \, d\bar L_{\exp(tX)}o.$$
 An Ehresmann connection is a choice of  a set of complements of $g\h$ that is right invariant by the action of $H$.   Our basic assumption is that $\g=\h\oplus \m$ is a {\it  reductive decomposition}, in which case $(gh)\m=TR_h (g\m)$ and
$T_gG=g\h\oplus g\m$
is an Ehresmann connection. See  \cite[S. Kobayashi and K. Nomizu]{Kobayashi-NomizuI}. 

 An Ehresmann connection determines and is uniquely determined by
horizontal lifting maps $\h_{u}: T_{gH} M\to T_uG$ where $u\in \pi^{-1}(gH)$. Furthermore, to every
  piecewise $C^1$ curve $c$ on $M$ and every  initial value $\tilde c(0)\in \pi^{-1}(c(0))$, 
  there is a unique curve $\tilde c$ covering $c$ with the property that
${d\over dt} {\tilde c(t)}\in \tilde c(t)\m$.  See Besse \cite{Besse}.
 We can also horizontally lift a sample continuous  semi-martingale.
The case of the linear frame bundle is specially well known, see
J. Eells and D. Elworthy \cite{Eells-Elworthy76}, P. Malliavin \cite{Malliavin78}. See also  M. Arnaudon \cite{Arnaudon-homogeneous}, M. Emery \cite{Emery} and D. Elworthy \cite{Elworthy-book}, and N. Ikeda, S. Watanabe \cite{Ikeda-Watanabe}. This study has been taken further in D. Elworthy, Y. LeJan, X.-M. Li \cite{Elworthy-LeJan-Li-book-2}, in connection
with horizontal lifts of intertwined diffusions. 
A continuous time Markov process, whose infinitesimal generator is in the form of the sum of squares
of vector fields, is said to be horizontal if the vector fields are horizontal vector fields.

 Let $\{b_t^l, w_t^k,   k=1, \dots,  N_1, l= 1, \dots, N_2\}$ be 
real valued, not necessarily independent, Brownian motions.  Let $\g=\h\oplus \m$ be a reductive structure .
 Let $\{A_i, 1\le i\le p\}$ be a basis of $\h$, $\{X_j, p+1\le j\le n\}$  a basis of $\m$, and $\{c_k^i, c_l^j\}$
is a family of real valued smooth functions on $G$.  
  Let $\varpi$ be the canonical {\it connection 1-form} on the principal bundle $P(G, H, \pi)$, determined by
$\varpi (A_k^*)=A_k$ whenever $A_k\in \h$ and $\varpi(X_j^*)=0$ whenever $X_j\in \m$.
A left invariant vector field corresponding to a Lie algebra element is denoted by an upper script $*$ for emphasizing.

 \begin{definition}
 A  semi-martingale  $(\tilde x_t)$  in $G$ is horizontal if  $\varpi(\circ d\tilde x_t)=0$. If $(x_t)$ is a semi-martingale on $M$, we denote by $(\tilde x_t)$ a  horizontal  lift.
 \end{definition}
 
Let $Y_k^h(g)=\sum_{i=p+1}^n c_k^i(g)X_i^*(g)$ and $
 Y_l^v(g)=\sum_{j=1}^{p}c_l^j(g)A_j^*(g)$.
Denote by $(g_t, t<\zeta)$ the maximal solution to the following system of equations,
\begin{equation}\label{3-1}
dg_t=Y_0^h(g_t)dt+ \sum_{k=1}^{N_1}Y_k^h(g_t) \circ dw_t^k+Y_0^v (g_t)dt+
  \sum_{l=1}^{N_2} Y_l^v(g_t)\circ db_t^l,
\end{equation} 
with initial value $g_0$ and  $x_t=\pi(g_t)$. For simplicity let $b_t^0=t$ and $w_t^0=t$.

In the lemma below, we split $(g_t)$ into its `horizontal' and `vertical part' and
describe the horizontal lift of the projection of $(g_t)$ by an explicit 
stochastic differential equation where the role played by the vertical part is transparent.
\begin{lemma}
\label{lemma5.2}
 Let $x_0=\pi(g_0)$. Take $u_0\in \pi^{-1}(x_0)$ and define $a_0=u_0^{-1}g_0$.  Let
  $(u_t,  a_t, t<\eta)$ be the maximal solution to the  following system of equations  
\begin{eqnarray}
\label{horizontal-1}
d u_t  &=&\sum_{k=0}^{N_1} \sum_{i=p+1}^n c_k^i(u_t a_t)
\left( \Ad(a_t) X_i\right)^*( u_t) \circ   dw_t^k\\
d a_t&=& \sum_{l=0}^{N_2} \sum_{j=1}^{p} c_l^j(u_t a_t)A_j^*(a_t)\circ db_t^l.
\label{horizontal-2}
\end{eqnarray}
Then the following statements hold.
\begin{enumerate}
\item [(1)]  $(u_ta_t, t<\eta)$ solves (\ref{3-1}). Furthermore $\eta\le \zeta$ where $\zeta$ is the life time of $(g_t)$.
\item[(2)] 
 $(u_t, t< \eta )$ is a horizontal lift of $(x_t, t< \zeta ) $. Consequently $\zeta=\eta$ a.s.

\item [(3)]  If $(\tilde x_t, t<\zeta)$ is an horizontal lift of $(x_t, t<\zeta)$, it is a solution of (\ref{horizontal-1}) with $u_0=\tilde x_0$.

\end{enumerate}
\end{lemma}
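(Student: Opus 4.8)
The plan is to verify the three assertions by direct computation with the Stratonovich calculus, exploiting the fact that the product map $H \times G \to G$, $(h,g)\mapsto hg$, together with the adjoint action, intertwines the splitting $\g = \h \oplus \m$ with the principal bundle structure. First I would establish (1). Write $g_t = u_t a_t$ and apply the Stratonovich product rule; since Stratonovich differentials obey the ordinary Leibniz rule, $\circ d(u_t a_t) = (\circ d u_t) a_t + u_t (\circ d a_t)$, interpreted via the differentials of right- and left-translation. Pushing $(\circ d u_t)a_t$ forward by $dR_{a_t}$ turns the left-invariant vector field $(\Ad(a_t)X_i)^*$ evaluated at $u_t$ into $X_i^*$ evaluated at $u_t a_t = g_t$, because $dR_{a_t}\, TL_{u_t}\,\Ad(a_t) X_i = TL_{u_t a_t} X_i$. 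This produces exactly the horizontal part $\sum_k Y_k^h(g_t)\circ dw_t^k + Y_0^h(g_t)dt$ of \eqref{3-1}. The term $u_t(\circ d a_t)$, pushed forward by $dL_{u_t}$, sends $A_j^*(a_t)$ to $A_j^*(u_t a_t) = A_j^*(g_t)$ by left-invariance, giving the vertical part $\sum_l Y_l^v(g_t)\circ db_t^l + Y_0^v(g_t)dt$. Summation over both families then reproduces \eqref{3-1}; uniqueness of solutions to \eqref{3-1} up to explosion forces $u_t a_t = g_t$ on $[0,\eta)$, and since $(g_t)$ is defined up to $\zeta$, we get $\eta \le \zeta$.

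For (2), I would check $\varpi(\circ d u_t) = 0$. Since $\varpi$ is the connection $1$-form with $\varpi(X_i^*) = 0$ for $X_i \in \m$, and $u_t$ solves \eqref{horizontal-1} whose driving vector fields are $(\Ad(a_t)X_i)^*$, the key point is that $\Ad(a_t)X_i \in \m$ for all $t$: this is precisely where the reductive hypothesis $\Ad(H)(\m)\subset \m$ enters, together with the fact that $a_t$ stays in $H$ — which is immediate from \eqref{horizontal-2}, since that equation is driven by the left-invariant fields $A_j^* \in \h$ and started at $a_0 = u_0^{-1}g_0 \in H$ (note $u_0 \in \pi^{-1}(x_0)$ means $u_0 H = g_0 H$, so $a_0 \in H$). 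Hence every $\circ d u_t$ increment lies in the horizontal distribution $u_t\m$, so $\varpi(\circ d u_t)=0$ and $(u_t)$ is horizontal. Since $\pi(u_t a_t) = \pi(u_t)$ (right multiplication by $a_t\in H$ does not change the coset) and $u_t a_t = g_t$ by part (1), $(u_t)$ covers $x_t = \pi(g_t)$; it is therefore the horizontal lift of $(x_t)$ through $u_0$. Because the horizontal lift of a semimartingale exists for as long as the base path does (the lifting SDE, read in a local trivialisation, has the same explosion time as the base — a standard fact, c.f. the references to Elworthy--LeJan--Li and Ikeda--Watanabe), $(u_t)$ does not explode before $x_t$, i.e. $\eta \ge \zeta$; combined with $\eta\le\zeta$ from (1) this gives $\eta = \zeta$ a.s.

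For (3), suppose $(\tilde x_t)$ is any horizontal lift of $(x_t)$ with $\tilde x_0 = u_0$. Both $(\tilde x_t)$ and $(u_t)$ are horizontal, project to $(x_t)$, and agree at time $0$; uniqueness of the horizontal lift with given initial point (Ehresmann connection, applied pathwise to the semimartingale) gives $\tilde x_t = u_t$, and in particular $(\tilde x_t)$ solves \eqref{horizontal-1}. Alternatively, and more in the spirit of the explicit equation, one can argue directly: writing $\tilde x_t$ in terms of $u_t$ via a process in $H$, horizontality of both forces that process to be constant.

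I expect the main obstacle to be the bookkeeping in part (1) — keeping straight which translation ($dL$ or $dR$) acts on which factor when differentiating the product $u_t a_t$, and checking that the $\Ad(a_t)$ twist in \eqref{horizontal-1} is exactly what is needed so that the right-translate of $(\Ad(a_t)X_i)^*(u_t)$ lands on $X_i^*(g_t)$ rather than on some other frame. Once that identity is in place, the reductive-structure input for (2) and the uniqueness argument for (3) are routine. A minor technical point worth a remark is the justification that the horizontal lift and the base process share an explosion time, which on a non-compact $G$ is not entirely automatic but follows from working in local trivialisations where \eqref{horizontal-1}--\eqref{horizontal-2} have smooth (hence locally Lipschitz) coefficients.
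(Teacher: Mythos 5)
Your proof of parts (1) and (2) is essentially the paper's: the Stratonovich product rule for $u_ta_t$ (which the paper writes as $d\tilde g_t = dR_{a_t}\circ du_t + (a_t^{-1}\circ da_t)^*(\tilde g_t)$), the identity $dR_{a}\,TL_{u}\,\Ad(a)X = TL_{ua}X$, the observation that $a_t$ stays in $H$ and $\Ad(a_t)X_i \in \m$ by reductivity, and the use of uniqueness/non-explosion of horizontal lifts to identify $\eta$ with $\zeta$.

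For part (3) you diverge. The paper gives a direct computation: setting $k_t=\tilde x_t^{-1}g_t$, writing $d\tilde x_t = dR_{k_t^{-1}}\circ dg_t + (k_t\circ dk_t^{-1})^*(\tilde x_t)$, applying the connection form $\varpi$ to extract the SDE $dk_t=\sum c_l^j(g_t)A_j^*(k_t)\circ db_t^l$, and then verifying that the vertical contributions cancel, leaving \eqref{horizontal-1}. You instead invoke pathwise uniqueness of the horizontal lift through a given initial point. That shortcut is valid and shorter, but it gives less: the paper's route also \emph{produces} the explicit equation for the $H$-valued process $k_t$, which is reused downstream. A more substantive issue: as written you restrict to $\tilde x_0 = u_0$, whereas statement (3) allows any $\tilde x_0\in\pi^{-1}(x_0)$; the fix is routine — re-run parts (1)–(2) with the new pair $(\tilde x_0,\tilde x_0^{-1}g_0)$ and then apply uniqueness of the lift through $\tilde x_0$ — but you should say so. Your parenthetical "alternative" is in fact the paper's actual argument, so you are aware of both routes.
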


\begin{proof} 
(1) Define $\tilde g_t:=u_ta_t$.   On $\{t< \eta\}$, we have
$$d\tilde g_t=dR_{a_t}  \circ du_t+( a_t^{-1}\circ da_t)^*(\tilde g_t).$$
Here $dR_g$ denotes the differential of the right translation $R_g$, $a_t^{-1}$ in the last term denotes the action of the differential, $dL_{(a_t)^{-1}}$, of the left multiplication. See page 66 of S. Kobayashi, K. Nomizu \cite{Kobayashi-NomizuI}. The stochastic differential $d$ on both the left and right hand side denotes Stratonovich integration.
Then $(\tilde g_t, t < \eta)$ is a solution of (\ref{3-1}),  which follows from the computations below.
\begin{equation*}
{\begin{aligned}
d\tilde g_t=&\sum_{k=0}^{N_1} \sum_{i=p+1}^n c_k^i(u_t a_t)
dR_{a_t}\left( \Ad(a_t) X_i\right)^*( u_t) \circ   dw_t^k\\
&+\sum_{l=0}^{N_2} \sum_{j=1}^{p} c_l^j(u_t a_t)A_j^*(\tilde g_t)\circ db_t^l.
\end{aligned}}
\end{equation*}
Since $ dR_{a_t}\left(\Ad(a_t) (X_j)\right)^*(u_t)=X_j^*(\tilde g_t)$, 
$$d\tilde g_t=\sum_{k=0}^{N_1} \sum_{j=p+1}^n c_k^j(\tilde g_t)
\left(X_k\right)^*( u_t) \circ   dw_t^k
+\sum_{l=0}^{N_2} Y_l^v(\tilde g_t)\circ db_t^l,
$$ 
which is equation (\ref{3-1}). Since the coefficients of (\ref{3-1}) are smooth, pathwise uniqueness holds.  In particular
  $g_t=u_t a_t$ and the life time $\zeta$ of (\ref{3-1}) must be greater or equal to $\eta$.

(2)   It is clear that $a_0\in H$ and $\varpi(\circ du_t)=0$.
 Let $y_t=\pi(u_t)$. Then
$$d y_t  =\sum_{k=0}^{N_1} \sum_{i=p+1}^n c_k^i(u_ta_t)
 d\bar L_{u_ta_t} \left(d\pi(X_i)\right) \circ   dw_t^k,$$
following from the identity $d\pi\left(\left( \Ad(a) X_i\right)^*(u)\right)=d\bar L_ud\bar L_a \left(d\pi(X_i)\right)$. 
By the same reasoning $(x_t)$ satisfies the equation
$$dx_t=\sum_{k=0}^{N_1} d\pi(Y_k^h(g_t)) \circ dw_t^k.$$
By the definition, $d\pi(Y_k^h(g_t))=\sum_{i=p+1}^n c_k^i(g_t)d\pi(X_i^*(g_t))$ and $d\pi(X_i^*(g_t))=T\bar L_{g_t} d\pi(X_i)$.
Using part (1), $g_t=u_ta_t$, we conclude that the two equations above are the same and $\pi(u_t)=x_t$. This concludes that  $(u_t)$ is a horizontal lift of $(x_t)$
up to time $\eta$.

It is well known that through each $u_0$ there is a unique horizontal lift $(\tilde x_t)$
and the life time of $(\tilde x_t)$ is the same as the life time of $(x_t)$.
See I. Shigekawa \cite{Shigekawa82} and R. Darling \cite{Darling-thesis}.
The life time of $(x_t)$ is  $\zeta$. 
Let $\tilde a_t$ be the process such that $g_t=u_t\tilde a_t$ for $t<\eta$. On $\{t<\eta\}$, $\tilde a_t=a_t$ and $u_t=\tilde x_t$. If $\eta<\zeta$, as $t\to \eta$,
$\lim_{t\to \eta} u_t$ leaves every compact set. This is impossible as it agrees with $\tilde x_t$. Similarly $(a_t)$ cannot explode before $\zeta$.

(3) Let $(g_t, t<\zeta)$ be a solution of (\ref{3-1}) and set $x_t=\pi(g_t)$.
For each $t$,  $g_t$ and $\tilde x_t$ belong to the same fibre.
Define $k_t=\tilde x_t^{-1}g_t$,  which takes values in $H$ and is defined for all $t<\zeta$. 
Then,
 $$d \tilde x_t= dR_{(k_t)^{-1}} \circ  dg_t + \left( k_t\circ d(k_t)^{-1} \right)^*(\tilde x_t).$$
From this and equation (\ref{3-1})  we obtain the following,

\begin{equation}\label{3-3}
{\begin{aligned}d \tilde x_t&=  dR_{(k_t)^{-1}} \left(
\sum_{k=0}^{N_1} \sum_{i=p+1}^n c_k^i(g_t)
X_i^*( g_t) \circ   dw_t^k  \right)\\
&+dR_{(k_t)^{-1}} \left(\sum_{l=0}^{N_2} \sum_{j=1}^p c_l^j(g_t) A_j^*(g_t)\circ db_t^l
\right) +( (k_t) \circ d(k_t)^{-1})^*(\tilde x_t). 
\end{aligned}}
\end{equation}
 We apply the connection 1-form $\varpi$ to equation (\ref{3-3}), observing  $\omega(\circ d\tilde x_t)=0$ and $\Ad(k_t)(X_i)\in \m$,
$${\begin{aligned}0&=\sum_{l=0}^{N_2} \sum_{j=1}^p c_l^j(g_t)
 \varpi_{\tilde x_t}\left(dR_{(k_t)^{-1}} A_j^*(g_t)\right)\circ db_t^l+(k_t) \circ d(k_t)^{-1}\\
&=\sum_{l=0}^{N_2} \sum_{j=1}^p c_l^j(g_t) \Ad(k_t) A_j\circ db_t^l
+(k_t) \circ d(k_t)^{-1}.\end{aligned}}$$
We have used the fact that $X_j^* $ are horizontal, $dR_{(k_t)^{-1}} \left(  (X_j)^*(g_t)\right)=\left(\Ad(k_t) (X_j)\right)^*(\tilde x_t)$,
and  $\varpi_{ga^{-1}}(R_{(a^{-1})_*}w)=\Ad(a)\varpi_g (w)$ for any  $w\in T_gG$.
It follows that $d(k_t)^{-1}  =-\sum_{l=0}^{N_2} \sum_{j=1}^p c_l^j(g_t) R_{(k_t)^{-1}} A_j\circ db_t^l$. By the product rule,
$$dk_t=  \sum_{l=0}^{N_2} \sum_{j=1}^pc_l^j(g_t) A_j^*(k_t)\circ db_t^l.$$
Thus $(k_t)$ solves equation (\ref{horizontal-2}) and we take this back to (\ref{3-3}). 
Since the vertical vector field associated to $k_t\circ dk_t^{-1}$ evaluated at $\tilde x_t$ is given by the formula
$$( k_t \circ d(k_t)^{-1})^*(\tilde x_t)
=-\sum_{l=0}^{N_2} \sum_{j=1}^p c_l^j(g_t)( \Ad(k_t)A_j)^*( \tilde x_t) \circ db_t^l,$$ 
 the second term and the third term on the right hand side of
(\ref{3-3}) cancel. Using the same computation given earlier, we see that 
$${\begin{aligned}d \tilde x_t&= dR_{(k_t)^{-1}} \left(
\sum_{k=0}^{N_1} \sum_{i=p+1}^n c_k^i(g_t)
X_i^*( g_t) \circ   dw_t^k  \right)  \\
&=\sum_{k=0}^{N_1} \sum_{i=p+1}^n c_k^i(\tilde x_tk_t)
\left(\Ad(k_t) X_i\right)^*( \tilde x_t) \circ   dw_t^k,
\end{aligned}}$$
proving that  $(\tilde x_t, t<\zeta)$ is a solution of (\ref{horizontal-1}) and concludes the proof. In particular
$\zeta\ge \tau$.
\end{proof}

We observe  that the Ehresmann connection induced by the reductive decomposition is independent of the scaling of the Riemannian metric.
\begin{corollary}
\label{Corollary3.3}
Let $\epsilon_l >0$ and $Y_0\in \m$. Let $(g_t^\epsilon)$ be a solution to the equation
\begin{equation}\label{3-1-2}
{\begin{aligned}
dg_t^\epsilon&=Y_0^h(g_t^\epsilon)dt+ \sum_{k=1}^{N_1}Y_k^h(g_t^\epsilon) \circ dw_t^k
+{1\over \epsilon_0}Y_0^v (g_t^\epsilon)dt+
   \sum_{l=1}^{N_2} {1\over \epsilon_l}Y_k^v(g_t^\epsilon)\circ db_t^l, \quad
  g_0^\epsilon=g_0.
\end{aligned}}
\end{equation} 
Then the horizontal lift of $\pi(g_t^\epsilon)$ satisfies the  following system of equations
\begin{equation}
{\begin{aligned}
\label{horizontal}
d\tilde x_t^\epsilon  =&\sum_{k=1}^{N_1} \sum_{i=p+1}^n c_k^i(\tilde x_t^\epsilon a_t^\epsilon)
\left( \Ad(a_t^\epsilon) X_i\right)^*( \tilde x_t^\epsilon) \circ   dw_t^k\\
&+\sum_{i=p+1}^n 
c_0^i(\tilde x_t ^\epsilon a_t^\epsilon) \left( \Ad(a_t^\epsilon) X_i\right)^*( \tilde x_t^\epsilon) \,dt, \\
d a_t^\epsilon=& \sum_{l=1}^{N_2} {1\over \epsilon_l}\sum_{j=1}^{p} c_l^j(\tilde x_t^\epsilon a_t^\epsilon)A_j^*(a_t^\epsilon)\circ db_t^l
+\sum_{j=1}^{p} {1\over \epsilon_0} c_0^j(\tilde x_t^\epsilon a_t^\epsilon)A_j^*(a_t^\epsilon) \, dt,
\end{aligned}}
\end{equation}
up to an explosion time. Here $\tilde x_0^\epsilon =g_0$ and $a_0^\epsilon$ is the identity.
\end{corollary}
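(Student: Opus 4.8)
The plan is to apply Lemma \ref{lemma5.2} directly, after observing that equation (\ref{3-1-2}) is an instance of (\ref{3-1}) once we absorb the scaling factors into the driving semi-martingales. First I would set $b_t^0 = t$ and $w_t^0 = t$ as in the lemma, and then reorganise the coefficients of (\ref{3-1-2}): the $Y_0^v$ term with factor $1/\epsilon_0$ becomes the $l=0$ summand with $c_0^j$ replaced by $\frac{1}{\epsilon_0}c_0^j$, and each $\frac{1}{\epsilon_l}Y_k^v$ term is the $l$-th summand with $c_l^j$ replaced by $\frac{1}{\epsilon_l}c_l^j$; similarly the horizontal drift $Y_0^h$ is the $k=0$ summand. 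Since the $\epsilon_l$ are positive constants, the rescaled functions $\frac{1}{\epsilon_l}c_l^j$ are still smooth real-valued functions on $G$, so the hypotheses of Lemma \ref{lemma5.2} are met verbatim with this relabelling.

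Next I would invoke part (2) of Lemma \ref{lemma5.2}: the process $(u_t^\epsilon, t<\eta)$ solving the horizontal system (with the rescaled coefficients) is a horizontal lift of $x_t^\epsilon = \pi(g_t^\epsilon)$, and $u_t^\epsilon a_t^\epsilon = g_t^\epsilon$. Writing out equations (\ref{horizontal-1})--(\ref{horizontal-2}) with the rescaled coefficients, and renaming $u_t^\epsilon$ as $\tilde x_t^\epsilon$, yields precisely (\ref{horizontal}): the $\frac{1}{\epsilon_l}$ factors reappear in the $a_t^\epsilon$ equation exactly where they were inserted, and the $k=0$ term in the $u$-equation becomes the $\,dt$ drift term $\sum_i c_0^i(\tilde x_t^\epsilon a_t^\epsilon)(\Ad(a_t^\epsilon)X_i)^*(\tilde x_t^\epsilon)$. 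The stated initial conditions $\tilde x_0^\epsilon = g_0$ and $a_0^\epsilon = $ identity are the choice $u_0 = g_0$ (which lies in $\pi^{-1}(x_0^\epsilon)$) so that $a_0 = u_0^{-1}g_0 = $ identity.

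There is really no substantive obstacle here — this corollary is a bookkeeping consequence of the lemma — but the one point that deserves a sentence is the claim, stated in the sentence preceding the corollary, that the Ehresmann connection determined by $\m$ does not depend on the scaling $\epsilon$ of the metric. This is because the horizontal distribution $g\mapsto g\m$ is defined purely by the reductive splitting $\g = \h\oplus\m$, which is fixed, whereas the non-homogeneous rescaling only changes the lengths assigned to $\frac{1}{\sqrt\epsilon}X_j^*$ for $X_j\in\h$ and leaves $\m$ (hence the complement, hence the connection form $\varpi$) untouched; thus the same $\varpi$ with $\varpi(A_k^*)=A_k$, $\varpi(X_j^*)=0$ governs the horizontality condition for every $\epsilon$. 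Finally I would note that the conclusion holds up to the explosion time of the system (\ref{horizontal}), which by parts (1)--(2) of Lemma \ref{lemma5.2} coincides with the life time $\zeta$ of $(g_t^\epsilon)$, so nothing is lost in the reduction.
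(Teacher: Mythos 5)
Your proposal is correct and takes essentially the same route the paper intends: the paper gives no separate proof of Corollary~\ref{Corollary3.3}, treating it — as you do — as a direct relabelling of Lemma~\ref{lemma5.2}, with the observation (stated by the paper just before the corollary and justified by you) that the Ehresmann connection, hence $\varpi$ and the horizontality condition, depends only on the splitting $\g=\h\oplus\m$ and not on the scaling of the metric. Your bookkeeping of the rescaled coefficients $\tfrac{1}{\epsilon_l}c_l^j$, the absorption of the $k=0,l=0$ drift terms via $w_t^0=t$, $b_t^0=t$, and the initial conditions $u_0=g_0$, $a_0=1$ all match, so there is nothing to add.
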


\begin{example}\label{Berger's-sphere}
Let us take the Hopf fibration $\pi: SU(2)\to S^2({1\over 2})$, given the bi-invariant metric. 
If we represent $SU(2)$ by the unit sphere in $\C^2$ and $S^2({1\over 2})$ as a subset in  $\R\oplus \C$,  the Hopf map
is given by the formula  $\pi(z,w)=({1\over 2}(|w|^2-|z|^2), z\bar w)$. It is a Riemannian submersion.
Let $\{X_1, X_2, X_3 \} $ be Pauli matrices defined by (\ref{pauli}) and let
 $\m=\<X_2, X_3\>$. Then $[\m, \h]\subset \m$. This is easily seen from the structure of the Lie bracket: $[X_1, X_2]=-2X_3,  \quad
[X_2, X_3]=-2X_1, \quad [X_3, X_1]=-2X_2$. 
Let $(g_t^\epsilon)$ be a solution to the equation (\ref{sphere-1})
 and $x_t^\epsilon=\pi(g_t^\epsilon)$. Denote by $(u_t^\epsilon)$ the horizontal lift of $(x_t^\epsilon)$.  By 
 Corollary \ref{Corollary3.3}, $(u_t^\epsilon, a_t^\epsilon)$ satisfies
$${\begin{aligned}du_t^\epsilon&=(\Ad(a_t^\epsilon)X_2)(u_t^\epsilon) \circ db_t^2+(\Ad(a_t^\epsilon)X_3)(u_t^\epsilon) \circ db_t^3,\\
da_t^\epsilon&={1\over \sqrt \epsilon} X_1(a_t^\epsilon) \circ db_t^1.
\end{aligned}}$$
 Since the metric is invariant by $\Ad_H$, $(u_t^\epsilon, t\ge 0)$  is a horizontal Brownian motion, its Markov generator is the horizontal Laplacian $\Delta^h=\trace \nabla_{\m}  d$.
Furthermore \linebreak $\{\pi_*( (\Ad(a_t^\epsilon)X_2)^*), \pi_*(  (\Ad(a_t^\epsilon)X_3)^*)\}$ is an orthonormal frame in $S^2({1\over 2})$ and then for each $\epsilon>0$,
 $(x_t^\epsilon, t\ge 0)$ is a Brownian motion on $S^2({1\over 2})$.   
 \end{example}

\begin{corollary}
\label{corollary3.5}
 Let $Y_0\in \m$, $A_0\in \h$ and $\{A_k, 1\le k\le N\}\subset \h$.  If there is an $\Ad_H$ invariant inner product on $\g$,  the following SDEs are conservative  for every $\epsilon$.
\begin{eqnarray}\label{3-6}\dot u_t^\epsilon&=&\left(\Ad(h_{t\over \epsilon}) Y_0\right)^*(u_t^\epsilon ), \\
\quad dh_t&=& \sum_{k=1}^NA_k^*(h_t)\circ db_t^k+A_0^*( h_t) dt
\label{3-6-2}\\
dg_t^\epsilon&=&Y_0 ^*(g_t^\epsilon)  dt+ {1\over \sqrt\epsilon} \sum_{k=1}^N A_k^*(g_t^\epsilon)\circ db_t^k+{1\over \epsilon} A_0^*(g_t^\epsilon) dt. \label{3-7}\end{eqnarray}
Furthermore $\pi(g_t^\epsilon)= \pi(u_t^\epsilon)$, and $(g_t^\epsilon)$ and $(u_t^\epsilon h_{t\over \epsilon})$ are equal in law.
 \end{corollary}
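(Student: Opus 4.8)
The plan is to recognise (\ref{3-7}) as a special case of the equations treated in Lemma \ref{lemma5.2} and its scaled form Corollary \ref{Corollary3.3}, and to read off the reduced system. Fix a reductive decomposition $\g=\h\oplus\m$ with $Y_0\in\m$; the hypothesis that $\g$ carries an $\Ad_H$-invariant inner product guarantees that such a complement exists (for instance $\m=\h^\perp$). In (\ref{3-7}) the drift $Y_0^*$ is a \emph{horizontal} vector field, because $Y_0\in\m$, driven by $dt$, while $A_1^*,\dots,A_N^*,A_0^*$ are \emph{vertical}, because $A_k,A_0\in\h$, driven by $\tfrac1{\sqrt\epsilon}\,db_t^k$ and $\tfrac1\epsilon\,dt$ respectively. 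Hence, after expressing $Y_0$ in a basis of $\m$ and $A_k,A_0$ in a basis of $\h$, equation (\ref{3-7}) is of the form (\ref{3-1-2}) with no horizontal Brownian terms, $\epsilon_0=\epsilon$ and $\epsilon_l=\sqrt\epsilon$. Corollary \ref{Corollary3.3} (equivalently Lemma \ref{lemma5.2}) then splits $g_t^\epsilon=\tilde x_t^\epsilon a_t^\epsilon$, where $(\tilde x_t^\epsilon)$ is the horizontal lift of $x_t^\epsilon:=\pi(g_t^\epsilon)$ and $(a_t^\epsilon)$ is an $H$-valued process, and these satisfy
$$d\tilde x_t^\epsilon=\left(\Ad(a_t^\epsilon)Y_0\right)^*(\tilde x_t^\epsilon)\,dt,\qquad da_t^\epsilon=\tfrac1{\sqrt\epsilon}\sum_{k=1}^N A_k^*(a_t^\epsilon)\circ db_t^k+\tfrac1\epsilon A_0^*(a_t^\epsilon)\,dt,$$
using $\sum_i c_0^i(\Ad(a)X_i)^*=(\Ad(a)Y_0)^*$ by linearity of $\Ad$. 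Since $a_t^\epsilon\in H$, the fibre through $g_t^\epsilon$ is the fibre through $\tilde x_t^\epsilon$, that is $\pi(g_t^\epsilon)=\pi(\tilde x_t^\epsilon)$.

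Next I would eliminate $\epsilon$ from the vertical equation by Brownian scaling: if $(h_s)$ solves (\ref{3-6-2}) with $h_0=e$, then the time change $t\mapsto h_{t/\epsilon}$ solves the $a^\epsilon$-equation above for the rescaled Brownian motions $\tfrac1{\sqrt\epsilon}b^k_{t\epsilon}$, so $(a_t^\epsilon)_{t\ge0}$ and $(h_{t/\epsilon})_{t\ge0}$ have the same law; conversely $s\mapsto a^\epsilon_{\epsilon s}$ is a solution of (\ref{3-6-2}). Now $\tilde x^\epsilon$ is the value at the $H$-path $(a^\epsilon_\cdot)$ of the pathwise (continuous) solution map of the left-invariant non-autonomous ODE $d c_t=(\Ad(\gamma_t)Y_0)^*(c_t)\,dt$, $c_0=g_0$, indexed by the driving path $\gamma$, and equation (\ref{3-6}) is precisely this same map applied to $\gamma=(h_{\cdot/\epsilon})$. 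Feeding in driving paths of equal law yields that the pairs $(\tilde x^\epsilon_\cdot,a^\epsilon_\cdot)$ and $(u^\epsilon_\cdot,h_{\cdot/\epsilon})$ have the same law; hence $(g_t^\epsilon)=(\tilde x_t^\epsilon a_t^\epsilon)$ and $(u_t^\epsilon h_{t/\epsilon})$ are equal in law, and under the coupling $a_t^\epsilon=h_{t/\epsilon}$ (so $u^\epsilon=\tilde x^\epsilon$) the identity $\pi(g_t^\epsilon)=\pi(u_t^\epsilon)$ holds pathwise.

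It remains to show conservativeness for each $\epsilon$. First, a left-invariant non-autonomous ODE $d u_t=V_t^*(u_t)\,dt$ on a Lie group with $V_\cdot\in\g$ locally bounded is conservative: one solves it from the identity on a time interval whose length is bounded below in terms of $\sup_{s\le T}|V_s|$, then left-translates and concatenates to reach any $T$; and a left-invariant SDE on a Lie group is conservative (its maximal solution is a L\'evy process on that group). Consequently (\ref{3-6-2}) and the $a^\epsilon$-equation, both left-invariant SDEs on $H$, are conservative; then $\tilde x^\epsilon$, which solves $d\tilde x_t^\epsilon=(\Ad(a_t^\epsilon)Y_0)^*(\tilde x_t^\epsilon)\,dt$ with continuous coefficient, is conservative, so the explosion time $\eta$ of Lemma \ref{lemma5.2} is infinite and hence $g^\epsilon=\tilde x^\epsilon a^\epsilon$ together with equation (\ref{3-7}) are conservative; finally (\ref{3-6}) has coefficient $t\mapsto\Ad(h_{t/\epsilon})Y_0$ continuous in $t$ because $(h_t)$ is conservative, so it too is conservative. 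The only point genuinely requiring care, since $G$ need not be compact, is this non-explosion; everything else is bookkeeping, the one subtlety being the Brownian-scaling time change and the attendant distinction between pathwise equality and equality in law.
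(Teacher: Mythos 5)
Your argument is correct and follows essentially the same route as the paper: reduce via Lemma \ref{lemma5.2} and Corollary \ref{Corollary3.3} to $g^\epsilon_t=\tilde x^\epsilon_t a^\epsilon_t$, establish non-explosion of the pieces, and identify $(a_t^\epsilon)$ with $(h_{t/\epsilon})$ in law by Brownian scaling. You are more explicit than the paper's three-line proof on the time-change step and on the distinction between the pathwise identity $\pi(g^\epsilon_t)=\pi(u^\epsilon_t)$ (valid under the coupling that makes $a^\epsilon_t=h_{t/\epsilon}$) and the equality in law of $(g^\epsilon_t)$ and $(u^\epsilon_t h_{t/\epsilon})$. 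Two small remarks: for non-explosion of (\ref{3-6}) the paper uses the $\Ad_H$-invariance directly --- $|\Ad(h)Y_0|=|Y_0|$ makes the random left-invariant vector field globally bounded in the complete left-invariant metric --- which is a more direct use of that hypothesis than your concatenation argument (which only needs local boundedness, and uses the inner product solely to exhibit a reductive $\m$); and the Brownian motion for which $t\mapsto h_{t/\epsilon}$ solves the $a^\epsilon$-equation should read $\sqrt{\epsilon}\,b^k_{t/\epsilon}$ rather than $\tfrac{1}{\sqrt\epsilon}b^k_{t\epsilon}$, though since both are standard Brownian motions the equality in law you deduce is unaffected.
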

\begin{proof}  
With respect to the left invariant Riemannian metric on $G$ generated by the $\Ad_H$-invariant inner product, the random vector fields $(\Ad(h_{t\over \epsilon}^\epsilon) Y_0)^*$ are bounded, so  (\ref{3-6}) is conservative for almost surely all $\omega$.
That (\ref{3-6-2}) does not explode is clear, c.f. Lemma \ref{le-symmetric} below.
Since $\pi(g_t^\epsilon)=\pi(u_t^\epsilon)$, by Lemma \ref{lemma5.2}, $(\ref{3-7})$ is also conservative. % from Proposition \ref{proposition5.3}. 
 \end{proof}
 
{\bf Remark.} By the averaging principle we expect that the processes $\{u_\cdot^\epsilon, \epsilon>0\}$ converge to the solution of the ODE $$\dot u_t=\int_H (\Ad(h)(Y_0) )^*(u_t) \;dh.$$
where $dh$ is the Haar measure on $H$. This can be seen, assuming $G$ compact for simplicity, by sub-dividing $[0,\f t \epsilon]$ into sub-intervals of size $\epsilon^{-\delta}$ for a suitable positive number $\delta$ and by the law of large numbers for Brownian motions on a compact manifold, %%New-Final
 stochastic averaging of stochastic slow-fast systems  is treated in \cite{Li-conservation}.  %%New-Final
Our main aim is to study the effective diffusion on the next time scale in case $\bar Y_0:=\int_H \Ad(h)(Y)  \;dh$ vanishes.
We do not need an explicit statement on the averaging principle, and will therefore refer the interested reader to \cite{Li-OM-1} and also to~\cite{Li-averaging}.

%Here we only illuminate the averaging principle prove for $G$ compact.  
%$$\begin{aligned}
%\E \left|\rho\left(u_{t}, u_{t\over \epsilon}^\epsilon\right)\right|
%&=\int_0^t (d_1 \rho)(u_r, u_r^\epsilon) (\bar Y_0(u_r))dr+ \int_0^t (d_2 \rho)(u_r, u_r^\epsilon)\left( {\Ad(h_{\f s \epsilon}) Y_0}(u_r^\epsilon) \right)dr\\
%&\le c_p \E\sum_j\left| \int_{s\over \epsilon}^{t\over \epsilon}
%L_{Y_j} \tilde \rho^2 (y^\epsilon_{\f s \epsilon}, y_{r\over \epsilon}^\epsilon) \alpha_j(h_{\f r \epsilon})dr\right|^p\\
%&\le c_p \left(\f {t-s} \epsilon\right)^p \sqrt {C_1(2p)} \sqrt {C_2(2p)}.
%\end{aligned}$$

  \section{Elementary lemmas}
  \label{section-elementary}

   Let $\{X_k, k=1,\dots, m\}$ be smooth vector fields on a smooth manifold $N$. Denote by $\L$ the H\"ormander type operator ${1\over 2}\sum_{k=1}^m (X_k)^2+X_0$. 
  If at each point, $X_1$, $\dots$, $X_m$ and their Lie brackets generate the tangent space, we say that $\L$ satisfies strong H\"ormander's condition.  
 It satisfies H\"ormander's condition if  $X_0$ is allowed.    A H\"ormander type operator on a compact manifold satisfying strong H\"ormander's condition has a unique invariant probability measure $\pi$;
furthermore for $f\in C^\infty(N;\R)$, $\L F=f$ is solvable if and only if $\int f d\pi=0$. We denote by
 $\L^{-1} f$ a solution to the Poisson equation $\L F=f$ whenever it exists. 
 If a Markov operator $\L$ has a unique invariant probability measure $\pi$  and $f\in L^1(N; \pi)$ we write $\bar f=\int_N f d\pi$.

 \begin{lemma}
\label{le-symmetric}
Let $G$ be a Lie group with left invariant Riemannian metric.  Then an SDE driven by left invariant vector fields
is conservative. 
If $\mu$ is a right invariant measure on $G$ then left invariant vector fields are divergence free and the linear operator $\B={1\over 2}\sum_{i=1}^m (X_i)^2$, where $X_i\in \g$, is symmetric on $L^2(G; d\mu)$.\end{lemma}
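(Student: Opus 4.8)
The statement has three assertions: (a) an SDE driven by left invariant vector fields is conservative; (b) if $\mu$ is right invariant, then left invariant vector fields are divergence free with respect to $\mu$; (c) the operator $\B=\frac12\sum_{i=1}^m (X_i)^2$ with $X_i\in\g$ is symmetric on $L^2(G;d\mu)$. I would treat them in this order, since (c) follows almost formally once (b) is in place, and (b) is the real content.

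For (a), the plan is to exhibit the solution of $dg_t = \sum_k X_k^*(g_t)\circ db_t^k + X_0^*(g_t)\,dt$ explicitly as a left translate of the solution started at the identity: if $(h_t)$ solves the same equation with $h_0=1$, then $g_t = g_0 h_t$ solves it with $g_0$ arbitrary, by left invariance of the vector fields and the fact that Stratonovich calculus obeys the ordinary chain rule. So it suffices to show non-explosion from the identity. For this I would use a standard criterion: the left invariant vector fields $X_k^*$ have bounded norm in the left invariant metric (indeed constant norm $|X_k|$), and their covariant derivatives are likewise bounded by left invariance; hence the generator has bounded coefficients relative to the left invariant frame, and one can control $\E[\rho(1,g_t)]$ where $\rho$ is Riemannian distance — the growth is at most linear in $t$ (e.g.\ via a Gronwall argument on $\E f(g_t)$ for $f$ a smoothed distance function, or by comparison with the radial process), which rules out explosion in finite time. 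This is the least novel part and I would keep it brief, perhaps citing the standard completeness/non-explosion results for SDEs with bounded coefficients on a complete Riemannian manifold.

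For (b), the key computation is that for a left invariant vector field $X^*$ and a right invariant volume measure $d\mu$, the flow of $X^*$ preserves $\mu$. Concretely, the flow of the left invariant vector field $X^*$ is right translation: $\phi_t^{X^*}(g) = g\exp(tX) = R_{\exp(tX)}(g)$. Since $\mu$ is right invariant, $R_{\exp(tX)}^*\mu = \mu$ for all $t$, so $\mathcal L_{X^*}\mu = 0$, which is exactly $\div_\mu X^* = 0$. Equivalently, for all $f\in C_c^\infty(G)$, $\int_G (X^* f)\,d\mu = \frac{d}{dt}\big|_{t=0}\int_G f(g\exp(tX))\,d\mu(g) = \frac{d}{dt}\big|_{t=0}\int_G f\,d\mu = 0$, using right invariance in the middle step. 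The main obstacle here is essentially bookkeeping: being careful about left versus right invariance (the flow of a \emph{left} invariant field is \emph{right} translation, so one genuinely needs a \emph{right} invariant measure, not the left one — this is the point where non-unimodularity of $G$ matters), and handling the distinction between divergence with respect to $\mu$ versus the divergence of the paper's choices. I expect no analytic difficulty, only the need to state the pairing identity cleanly.

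For (c), integrate by parts twice. For $f,\varphi\in C_c^\infty(G)$, since each $X_i^*$ is $\mu$-divergence free we have $\int_G (X_i^* f)\,\varphi\,d\mu = -\int_G f\,(X_i^*\varphi)\,d\mu$. Applying this to $\varphi$ replaced by $X_i^*\varphi$ and summing,
\[
\int_G (\B f)\,\varphi\,d\mu \;=\; \tfrac12\sum_i \int_G (X_i^*)^2 f\;\varphi\,d\mu \;=\; -\tfrac12\sum_i \int_G (X_i^* f)(X_i^*\varphi)\,d\mu \;=\; \int_G f\,(\B\varphi)\,d\mu,
\]
which is symmetry of $\B$ on $C_c^\infty(G)\subset L^2(G;d\mu)$, and hence on its domain of essential self-adjointness (or one simply states symmetry on the core $C_c^\infty$). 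The only care needed is justifying the vanishing of boundary terms, which is immediate for compactly supported $f,\varphi$, and noting that this suffices for the stated symmetry claim.
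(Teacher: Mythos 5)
Your proof is correct and follows essentially the same route as the paper: conservativity via bounded coefficients on a complete left-invariant Riemannian manifold, divergence-freeness via the identity $\int_G (Xf)(g)\,d\mu(g)=\frac{d}{dt}\big|_{t=0}\int_G f(g\exp(tX))\,d\mu(g)=0$ using right-invariance of $\mu$, and symmetry by integrating by parts. Two small differences worth noting: your reduction $g_t=g_0 h_t$ to the initial condition at the identity is a tidy observation the paper skips (it invokes Elworthy's uniform cover criterion directly), and your choice of $C_c^\infty$ test functions is cleaner than the paper's $BC^1$ when $\mu$ is infinite, since it avoids any integrability concerns in the boundary-term argument.
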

\begin{proof}
The SDE is conservative follows from the fact that a Lie group with left invariant metric is geodesically complete 
and has positive injectivity radius. The left invariant vector fields and their covariant derivatives are bounded,
so by localisation or the uniform cover criterion in D. Elworthy \cite[ Chapt vii]{Elworthy-book}, solutions of equation (\ref{3-6-2}) from any initial point
exist for all time.
 If $f\in BC^1(G; \R)$, using the right invariance of the measure $\mu$,
$\int_G (Xf) (g)  \mu(dg) =\int_G {d\over dt} f(g \exp(tX))\Big|_{t=0} \mu(dg) =0$.
%= {d\over dt} \int_G f(g) \mu(dg)\Big|_{t=0}=0.$$
Consequently $X $ having vanishing divergence with respect to $\mu$ and for
  $f_1, f_2\in BC^1(G.; \R)$,
$\int_G  f_2 (X{f_1} ) d\mu=-\int_G f_1 (Xf_2)  d\mu$.
In particular $\B$ is symmetric  on $L^2(G, \mu)$.  
\end{proof}

We say a family of vectors $\{A_1, \dots, A_N\}$ in $\h\subset \g$ is  {\it Lie algebra generating}
if $\{A_1, \dots, A_N\}$ and their iterated brackets generate $\h$.
Define $\L_0={1\over 2}\sum_{k=1}^N (A_k )^2+A_0 $. 
We restrict $A_k $ to the compact manifold $H$ and treat $\L_0$  as an operator on $H$. 
If $\L_0$ is symmetric and satisfies H\"ormander's condition, the maximal principle states that $\L_0^* u=0$ has only constant solutions.
%, see J.-M. Bony \cite{Bony69} and A. Sanchez-Calle  \cite{Sanchez-Calle78}. 
%In any case,  $\L_0^*$ satisfies also H\"ormander condition, the Poisson equation $\L_0f=0$ has a unique solution.
%We record the earlier discussion for this special case in the lemma below, a proof is included for the convenience of the reader.    
 \begin{lemma}
 \label{lemma4.6}
If $H$ is compact and $\{A_0, A_1, \dots, A_N\}\subset \h$ is Lie algebra generating,
 the following statements hold.
 \begin{enumerate}
 \item  The normalised Haar measure $dh$ is the unique invariant probability measure for $\L_0$,
and $\L_0$ is a Fredholm operator with Fredholm index $0$. 
\item If $\int_H \<\Ad(h)(Y_0), Y\>dh=0$, where $Y\in \g$,
there is a unique function $F\in C^\infty( H; \R)$ solving the Poisson equation $\L_0 F=\<\Ad(\cdot)(Y_0), Y\>$.
 \end{enumerate}
\end{lemma}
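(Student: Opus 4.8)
The plan is to treat $\L_0$ as a H\"ormander-type operator on the compact manifold $H$ and exploit the left-invariance of its vector fields. First I would prove part (1). Since $A_0, A_1, \dots, A_N$ are Lie algebra generating and lie in $\h$, the operator $\L_0 = \frac12\sum_k (A_k)^2 + A_0$ satisfies H\"ormander's condition at every point of $H$ (the $A_k$ and their iterated brackets span $\h = T_e H$, hence by left-translation span $T_h H$ at every $h$). Hence $\L_0$ is hypoelliptic, its transition semigroup is strong Feller, and on the compact manifold $H$ it has an invariant probability measure; by H\"ormander-type irreducibility the invariant measure is unique. To identify it with the normalised Haar measure I would use Lemma \ref{le-symmetric}: for the right-invariant Haar measure $dh$ on $H$, every left-invariant vector field is divergence-free, so $\int_H \L_0 f \, dh = \frac12\sum_k \int_H (A_k)^2 f\, dh + \int_H A_0 f\, dh = 0$ for all $f \in C^\infty(H)$, which says exactly that $dh$ is $\L_0$-invariant; on a compact group the right-invariant Haar measure is also left-invariant and is the normalised one. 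Uniqueness then forces this to be \emph{the} invariant measure. For the Fredholm statement: $\L_0$ acting on $C^\infty(H)$ (or between appropriate Sobolev spaces) is hypoelliptic with compact resolvent, hence Fredholm; since it annihilates constants and its $L^2$-adjoint $\L_0^*$ (computed with respect to Haar measure) also satisfies H\"ormander's condition and annihilates constants — by the maximum principle quoted just before the lemma, $\L_0^* u = 0$ has only constant solutions — the kernel and cokernel are both one-dimensional, so the Fredholm index is $0$.

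Next I would prove part (2). Set $\phi(h) := \langle \Ad(h)(Y_0), Y\rangle$, a smooth function on $H$. The hypothesis is precisely $\int_H \phi \, dh = 0$, i.e. $\phi$ is orthogonal to the constants, which by the index-$0$ Fredholm alternative (kernel of $\L_0$ $=$ constants, cokernel $=$ constants viewed via the Haar-measure pairing) means $\phi \in \Image(\L_0)$. Therefore there exists $F \in C^\infty(H)$ with $\L_0 F = \phi$; hypoellipticity guarantees $F$ is smooth. Uniqueness: any two solutions differ by an element of $\ker \L_0 = $ constants, so $F$ is unique up to an additive constant, and one normalises it by $\int_H F\, dh = 0$ to get genuine uniqueness as stated.

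The main obstacle — or rather the one point requiring care rather than routine verification — is the precise functional-analytic framework for the Fredholm claim when $\L_0$ is only hypoelliptic (not elliptic): one must choose the right pair of spaces (e.g. anisotropic Sobolev spaces adapted to the H\"ormander vector fields, or simply work with the $L^2$-closure and use that hypoellipticity plus compactness of $H$ gives a compact resolvent) so that $\L_0$ is genuinely Fredholm of index $0$, and one must know that the formal $L^2(H,dh)$-adjoint again satisfies H\"ormander's condition so that the maximum principle applies to it. Given that $H$ is a compact Lie group and the $A_k$ are left-invariant, $\L_0^*$ is again a left-invariant H\"ormander operator (one checks $A_k^* = -A_k + (\text{divergence term}) = -A_k$ since left-invariant fields are $dh$-divergence-free), so this is clean. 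Everything else — hypoellipticity, existence and uniqueness of the invariant measure, the solvability criterion — is standard H\"ormander/Bony theory for sub-elliptic operators on compact manifolds, and I would simply cite it.
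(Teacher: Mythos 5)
Your proof is correct and follows essentially the same route as the paper's: left-invariance plus Lie-algebra-generation gives H\"ormander's condition; Lemma~\ref{le-symmetric} makes Haar measure $\L_0$-invariant; hypoellipticity gives the sub-elliptic estimate, compact resolvent, and Fredholm property; both $\L_0$ and $\L_0^*$ satisfy H\"ormander's condition with one-dimensional kernel by the maximum principle, so the index is $0$ and the Fredholm alternative yields solvability. The one cosmetic difference is that the paper deduces uniqueness of the invariant measure from the ergodic-decomposition/disjoint-supports argument while you appeal to irreducibility of the hypoelliptic semigroup — equivalent in substance — and you explicitly note the normalisation $\int_H F\,dh = 0$ that makes the solution genuinely unique, a point the paper's proof leaves implicit.
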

\begin{proof}
Denote by $\L_0^*$ the dual of $\L_0$ on $L^2(H; \R)$ which, by Lemma \ref{le-symmetric}, is $\L_0^*=\sum_{k} (A_k )^2-A_0$.
Both $\L_0$ and $\L_0^*$ satisfies H\"ormander's condition.
We have seen that $\int_H \L_0f dh$ vanishes for all $f\in C^\infty$ and $dh$ is an invariant measure, with full topoligical support. Distinct ergodic invariant measures have disjoint supports, and since every invariant measure is a convex combination of ergodic invariant measures,
the Haar measure is therefore the only invariant measure, up to a scaling. %See M. Hairer \cite{Hairer-control}. 
 Also $\L_0$ satisfies a sub-elliptic estimate:
$\|u\|_s\le \|\L_0 u\|_{L^2}+c\|u\|_{L^2}$, \cite[L. H\"ormander]{Hormander-acta}, which implies that $\L_0$ has compact resolvent,
and $\L_0$ is a Fredholm operator. In particular $\L_0$ has closed range, \cite[L. H\"ormander ]{Hormander-book} 
and $\L_0u =\<\Ad(h)(Y_0), Y\>$ is solvable if and only if $\<\Ad(h)(Y_0), Y\>$ annihilates the kernel of $\L_0^*$, i.e. $\int_H \<\Ad(h)(Y_0), Y\>dh$ vanishes. 
By the earlier argument the dimension of the kernels of $\L_0$ and $\L_0^*$
agree and $\L_0$ has Fredholm index $0$.
\end{proof}

\section{Diffusion creation and  rate of convergence}
    \label{section-convergence}
    
    Let  $(h_t)$ be a Markov process on a compact manifold $H$ with generator $\L_0=\sum_k (A_k)^2+A_0$ where $A_k$ are smooth vector fields satisfying H\"ormander's condition, and a invariant probability measure $\mu$.
  Let  $\Phi^\epsilon_t(y)$ be the solution to a family of conservative random differential equations
$\dot y_t^\epsilon =\sum_{k=1}^m \alpha_k(h_{\f t \epsilon})Y_k(y_t^\epsilon)$ with  $y_0^\epsilon=y_0$ and $Y_k$ smooth vector fields.

\begin{lemma}
\label{thm-weak}
Suppose that $N$ is compact ; or satisfies the following conditions.
\begin{itemize}
\item The injectivity radius of $N$ is greater than a positive number $2a$. 
\item For a Riemannian distance function $\rho$ on $N$,
$$C_1(p):=\sup_{s,t\le 1}\sup_{x\in N} \E \left(|Y_j(y^\epsilon_{\f t \epsilon} )|^p
\1_{\rho(y^\epsilon_{\f t \epsilon}, x)\le 2a} \right) <\infty,$$ for all $p$; also
 $C_2(p):=\sup_{s,t\le 1}\sup_{x\in N} \E \left(|\nabla d \rho^2(y^\epsilon_{\f t \epsilon}, x )|^p
\1_{\rho(y^\epsilon_{\f t \epsilon}, x)\le 2a} \right)$ is finite.
\end{itemize} 
Then  $(y_{t\over\epsilon}^\epsilon)$ converge weakly to a Markov process with generator $-\sum_{i,j}\overline{\alpha_i \L_0^{-1}\alpha_j}L_{Y_i}L_{Y_j}$. 
  \end{lemma}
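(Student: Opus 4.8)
The plan is to realise $(y^\epsilon_{t/\epsilon})$ as a slow variable driven by the fast ergodic process $(h_{t/\epsilon})$ and to apply the homogenisation machinery of \cite{Li-limits} (the averaging-with-diffusion-creation theorem). First I would rewrite the random ODE $\dot y^\epsilon_t = \sum_k \alpha_k(h_{t/\epsilon}) Y_k(y^\epsilon_t)$ on the time scale $[0,1]$, setting $z^\epsilon_t := y^\epsilon_{t/\epsilon}$, so that $\dot z^\epsilon_t = \f 1\epsilon \sum_k \alpha_k(h_{t/\epsilon}) Y_k(z^\epsilon_t)$; the pair $(z^\epsilon_t, h_{t/\epsilon})$ is then a Markov process on $N\times H$. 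Since $\mu$ is the unique invariant probability measure for $\L_0$ and, by the centring hypothesis implicit in the statement, each $\alpha_k$ has zero $\mu$-average (this is where one invokes Lemma \ref{lemma4.6}: $\L_0$ is Fredholm of index $0$, so the Poisson equations $\L_0 \beta_k = \alpha_k$ are solvable with smooth solutions $\beta_k = \L_0^{-1}\alpha_k$), the fast fluctuations average out at first order and produce a second-order limit.

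The core computation is the standard perturbed-test-function (corrector) argument. For $f\in C^\infty_c(N)$ I would consider the corrected test function
\begin{equs}
f^\epsilon(z,h) = f(z) + \epsilon \sum_k \beta_k(h)\, L_{Y_k} f(z) + \epsilon^2 g(z,h),
\end{equs}
where $\beta_k = \L_0^{-1}\alpha_k$ and $g$ is chosen so that the formally divergent $\f 1\epsilon$ terms cancel. Applying the generator $\mathcal A^\epsilon = \f 1\epsilon \L_0^{(h)} + \f 1\epsilon \sum_k \alpha_k(h) L_{Y_k}^{(z)}$ of $(z^\epsilon, h_{\cdot/\epsilon})$ to $f^\epsilon$, the $\epsilon^{-1}$ term vanishes by the choice of $\beta_k$, and the $\epsilon^0$ term, after averaging against $\mu$ and using the fluctuation–dissipation identity $\int_H \alpha_i (\L_0^{-1}\alpha_j)\, d\mu = \overline{\alpha_i \L_0^{-1}\alpha_j}$, leaves exactly $-\sum_{i,j}\overline{\alpha_i \L_0^{-1}\alpha_j}\, L_{Y_i} L_{Y_j} f$. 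This identifies the limiting generator; the symmetrisation in $i,j$ is automatic and one checks the second-order operator is of the asserted form. Tightness of $(z^\epsilon_\cdot)$ in $C([0,1];N)$ follows in the compact case from the uniform bound on $|\sum_k \beta_k L_{Y_k} f|$, and in the noncompact case from exactly the moment bounds $C_1(p), C_2(p)$ on $|Y_j(y^\epsilon_{t/\epsilon})|$ and $|\nabla d\,\rho^2(y^\epsilon_{t/\epsilon},x)|$ hypothesised in the statement — these control the local martingale part and the drift of $\rho^2(z^\epsilon_t, x)$ so that no mass escapes to infinity and the Stroock–Varadhan martingale problem is well posed. Uniqueness of the limit is then the well-posedness of the martingale problem for the limiting (possibly degenerate, but H\"ormander-type) operator $-\sum_{i,j}\overline{\alpha_i\L_0^{-1}\alpha_j}L_{Y_i}L_{Y_j}$.

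The main obstacle is the noncompactness of $N$: on a general complete manifold the corrector $\beta_k L_{Y_k} f$ need not be bounded, parallel transport and the distance function are only locally controlled, and one must patch local estimates together. This is precisely why the injectivity-radius lower bound $2a$ and the truncated moment conditions $C_1(p), C_2(p)$ are imposed; with them one runs the argument on geodesic balls of radius $a$, uses $\rho^2(\cdot, x)$ as a localising Lyapunov function to show nonexplosion and tightness uniformly in $\epsilon$, and then removes the localisation. I would cite \cite{Li-limits} for the general limit theorem and indicate which of its hypotheses the stated conditions verify, rather than reproving it; the novelty here is only the geometric packaging (horizontal vector fields $Y_k$, the distance-function moment bounds) needed to cover $G$ non-compact, as flagged in the introduction.
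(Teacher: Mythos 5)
Your proposal takes a genuinely different route from the paper. The paper does not form the pair Markov process $(z^\epsilon,h_{\cdot/\epsilon^2})$ and run the perturbed-test-function calculus; instead it proves tightness by a direct "It\^o trick": using $\beta_j=\L_0^{-1}\alpha_j$ to integrate the driving term by parts along the path of $h$, it writes $\tilde\rho^2(y^\epsilon_{s/\epsilon},y^\epsilon_{t/\epsilon})$ as an $O(\epsilon)$ boundary term plus an $O(t-s)$ Lebesgue integral plus an $O(\sqrt\epsilon)$ martingale, raises to the power $p$, interpolates between two H\"older estimates and applies Kolmogorov's criterion; weak convergence and the identification of the limit are then obtained by invoking Theorem~5.4 of \cite{Li-limits} together with the Fredholm/index-zero structure of $\L_0$. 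Your corrector argument and the paper's It\^o trick are algebraically the same mechanism (the It\^o trick is a pathwise version of the corrector), but the paper's version is designed to yield the quantitative modulus-of-continuity bound $\E\,\tilde\rho^{2p}(y^\epsilon_{s/\epsilon},y^\epsilon_{t/\epsilon})\le c_p|t-s|^{p/2}$ for all $p$, which is needed later for the Wasserstein rate in Theorem~\ref{thm-limit-2}, whereas your sketch gives only a qualitative tightness argument.

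Two concrete gaps. First, the generator of the pair $(z^\epsilon_t,h_{t/\epsilon^2})$ on the macroscopic time scale is $\f{1}{\epsilon^2}\L_0^{(h)}+\f{1}{\epsilon}\sum_k\alpha_k(h)L^{(z)}_{Y_k}$, not $\f{1}{\epsilon}\L_0^{(h)}+\f{1}{\epsilon}\sum_k\alpha_k L^{(z)}_{Y_k}$ as you wrote; with your generator the $\epsilon^{-1}$ singular terms do not cancel (the $\L_0\beta_k$ contribution enters at order $1$, not $\epsilon^{-1}$), and the computation fails. With the correct $\epsilon^{-2}$ scaling (and the sign convention $f^\epsilon=f-\epsilon\sum_j\beta_j L_{Y_j}f+\epsilon^2 g$ so that $\L_0\beta_j=\alpha_j$ cancels the drift) the argument goes through. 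Second, your tightness claim in the compact case — "from the uniform bound on $|\sum_k\beta_k L_{Y_k}f|$" — is not by itself a tightness argument; you need an estimate on the modulus of continuity of $z^\epsilon$ uniform in $\epsilon$, which in the perturbed-test-function framework comes from writing Dynkin's formula for $f^\epsilon$, bounding the bounded-variation part by $C(t-s)$ and the martingale part via its quadratic variation, and then verifying the Aldous or Kolmogorov criterion. You should make that step explicit rather than asserting it, since it is precisely the content that the paper spends most of the proof on, and in the noncompact case it is where the hypotheses $C_1(p)$, $C_2(p)$ and the injectivity radius enter.
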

\begin{proof}
Let $\beta_j=\L_0^{-1}\alpha_j$
and we first prove the tightness of the family of stochastic processes$\{y^\epsilon_{\f t \epsilon}, \epsilon>0\}$. Note that
$$\rho(y_{\f s \epsilon}^\epsilon,  y_{\f t \epsilon}^\epsilon)=\int_{\f s {\epsilon}}^{\f t \epsilon}
\nabla \rho ( y_{\f s \epsilon}^\epsilon, y_{r}^\epsilon)
 \left(\sum_{k=1}^m \alpha_k(h_{\f r \epsilon}) Y_k(y_r^\epsilon)\right)dr, $$
where the gradient is on the second variable. At the first glance we expect that
$\rho(y_{\f t \epsilon}^\epsilon, y_{\f s \epsilon}^\epsilon)$ to be of oder $\f {t-s} \epsilon$, we will use a trick, the It\^o trick, to overcome this problem, see
 \cite{Li-averaging} where it was also used.  We may assume that $(h_t)$ solves (\ref{3-6-2}). Since  $\L_0 \beta_j=\alpha_j$, 
$$\beta_j(h_{\f t \epsilon})=\beta_j(h_0)+\int_0^{\f t \epsilon} \alpha_j (h_r) dr
+M^\epsilon_{\f t \epsilon} 
$$
where $M^\epsilon_{\f t \epsilon} $ denotes a local martingale, and $ \int_0^{\f t \epsilon} \alpha_j (h_r) dr$ is of order $1$, modulus the fast oscillating local martingale. To use this in our setting let us take  a $C^2$ function $f: G\to \R$, by the product rule:
\begin{equation*} 
\label{product}
{\begin{split} 
df(Y_j(y^\epsilon_{t\over \epsilon} ) )\beta_j(h_{t\over \epsilon^2})
&= df(Y_j(y^\epsilon_{s\over \epsilon} ))\beta_j( h_{s\over \epsilon^2})
+\sum_{j=1}^m \int_{s\over \epsilon}^{t\over \epsilon} L_{Y_i}L_{Y_j} f(y^\epsilon_r)
\,\alpha_i(h_{\f r \epsilon})\;\beta_j( h_{\f r \epsilon})\; \;dr \\
&+ {1\over \sqrt \epsilon} \sum_{k=1}^{m'} \int_{s\over \epsilon}^{t\over \epsilon}  
L_{Y_j} f( y^\epsilon_r) \, d\beta_j\left( A_k (h_{\f r \epsilon})\right)db_r^k
+{1\over \epsilon} \int_{s\over \epsilon}^{t\over \epsilon}  L_{Y_j} f(y^\epsilon_r) \,\L_0 \beta_j(h_{\f r \epsilon})dr .
\end{split} }
\end{equation*}

Since the Riemannian distance function fails to be $C^2$ if the two points are on  the cut locus of each other, we  take a smooth function $\phi: \R_+\to \R_+$ such that
$\phi(r)=r$ if $r<a$ and $\phi(r)=1$ for all $r>2a$ and define $\tilde \rho=\phi\circ \rho$. We inverse engineer with the last term in the equation above, replacing $f$ by $\tilde \rho^2$ to see that, c.f. \cite[Lemma 3.1]{Li-limits},  
 for $s,t\le 1$,
$${ \begin{split}
&\tilde \rho^2\left(y^\epsilon_{s\over \epsilon}, y_{t\over \epsilon}^\epsilon\right)
=\int_{s\over \epsilon}^{t\over \epsilon} \sum_{k=1}^m L_{Y_k(y_r^\epsilon)}(\tilde \rho^2)\left(y^\epsilon_{s\over \epsilon}, y_{r}^\epsilon\right)\left(  \alpha_k(h_{\f r \epsilon})\right)dr
 \\
=& \epsilon \sum_{j=1}^m 
 \left(L_{Y_j}\tilde \rho^2(y^\epsilon_{s\over \epsilon}, y_{t\over \epsilon}^\epsilon)\right) 
\;\beta_j(h_{\f t {\epsilon^2}})
-\epsilon \sum_{i,j=1}^m\int_{s\over \epsilon}^{t\over \epsilon} 
  \left(L_{Y_i}L_{Y_j} \tilde \rho^2( y^\epsilon_{s\over \epsilon}, y^\epsilon_r)\right)\;
\alpha_i(h_{\f r {\epsilon}})\;\beta_j(h_{\f r {\epsilon}})\; \;dr\\
&-  \sqrt \epsilon  \sum_{j=1}^m\sum_{k=1}^{m'}
\int_{s\over \epsilon}^{t\over \epsilon}  (L_{Y_j} \tilde \rho^2)(y^\epsilon_{\f s \epsilon}, y^\epsilon_r) \;
(L_{A_k} \beta_j)(h_{\f r {\epsilon}}) \;db_r^k.
\end{split} } $$
We raise both sides to the power $p$ where $p> 2$ to see for a constant $c_p$ depending on $|\beta_j|_\infty$, $|\alpha_j|_\infty$, $|A_k|_\infty$, $m$, and $p$,  $C_1$ and $C_2$, which may represent a different number in a different line,
$${ \begin{split}
&\E\left[ \tilde \rho^{2p}\left(y^\epsilon_{s\over \epsilon}, y_{t\over \epsilon}^\epsilon\right)\right]
 \\
\le & c_p\epsilon^p \sum_{j=1}^m 
 \E \left|L_{Y_j}\tilde \rho^2(y^\epsilon_{s\over \epsilon}, y_{t\over \epsilon}^\epsilon)\right|^p
+c_p\epsilon^p \sum_{i,j=1}^m \E \left(\int_{s\over \epsilon}^{t\over \epsilon} 
  \left|L_{Y_i}L_{Y_j} \tilde \rho^2( y^\epsilon_{s\over \epsilon}, y^\epsilon_r)\right|
 \;dr\right)^p\\
&+ c_p \epsilon^{\f p  2} \sum_{j=1}^m\sum_{k=1}^{m'}
\E \left( \int_{s\over \epsilon}^{t\over \epsilon}  |L_{Y_j} \tilde \rho^2(y^\epsilon_{\f s \epsilon}, y^\epsilon_r))|^2dr\right)^{\f p2}\\
&\le c_p C_1(p) \epsilon^p+c_p  (t-s)^{p}\sqrt{C_1(4p)} \sqrt{C_2(2p)}
+c_p(t-s)^{\f p 2}\sqrt{C_1(2p)}\sqrt{C_2(2p)}.
\end{split} } $$
Applying $\tilde \rho$ directly to $y_{t\over \epsilon}^\epsilon$ giving another estmate:
$$\begin{aligned}
\E \left|\tilde \rho^2\left(y^\epsilon_{s\over \epsilon}, y_{t\over \epsilon}^\epsilon\right)\right|^p
&\le c_p \E\sum_j\left| \int_{s\over \epsilon}^{t\over \epsilon}
L_{Y_j} \tilde \rho^2 (y^\epsilon_{\f s \epsilon}, y_{r\over \epsilon}^\epsilon) \alpha_j(h_{\f r \epsilon})dr\right|^p\\
&\le c_p \left(\f {t-s} \epsilon\right)^p \sqrt {C_1(2p)} \sqrt {C_2(2p)}.
\end{aligned}$$
Interpolate the estimates for  $\epsilon^p \le (t-s)^{\f p 2}$ and for $\epsilon^p \ge (t-s)^{\f p 2}$, to see
$$ \E \left|\tilde \rho\left(y^\epsilon_{s\over \epsilon}, y_{t\over \epsilon}^\epsilon\right)\right|^{2p}
\le c_p |t-s|^{\f p 2}.$$
Taking $p>4$ and applying Kolmogrov's theorem we obtain the required tightness. The weak convergence follows just as for the proof of
 \cite[Theorem 5.4]{Li-limits},  using a law of large numbers with rate of convergence the square root of time 
 \cite[Lemma 5.2]{Li-limits}.  For a Fredholm operator $\L_0$ of index zero, the limit is identified as following.
  Let $\{u_i, i=1, \dots, n_0\}$ be a basis of $\ker ( \L_0)$ 
and $\{\pi_i\, i=1, \dots, n_0\}$ the dual basis for the null space of $\L_0^*$.    Then
$\bar \L=-\sum_{i,j}\sum_{b=1}^{n_0} u_b \< \alpha_i \beta_j ,\pi_b\>L_{Y_i}L_{Y_j}$,
where the
bracket denotes the dual pairing between $L^2$ and $(L^2)^*$.
In our case there is only one invariant probability measure for $\L_0$, from which we conclude that $\bar \L=-\sum_{i,j}\overline{ \alpha_i \beta_j }L_{Y_i}L_{Y_j}$.
 \end{proof}

Let use return to our equations on the product space $G\times H$,
 \begin{equs}
\label{5-1-1}\dot u_t^\epsilon&=\left(\Ad(h_{t\over \epsilon}) Y_0\right)^*(u_t^\epsilon ),  \quad u_0^\epsilon=u_0 \\
\quad dh_t&= \sum_{k=1}^{N}A_k^*(h_t)\circ db_t^k+ A_0^*( h_t) dt, \quad \h_0=1.
\label{5-1-2}\end{equs} 
    Let $x_{\cdot\over \epsilon}^\epsilon=\pi(u_{\cdot\over \epsilon}^\epsilon)$ and $x_0=\pi(u_0)$. 
Let $\<,\>$ denote a left invariant and $\Ad_H$ invariant scalar product on $\g$,  $\{Y_j\}$ an orthonormal basis of $\m$ and
define
\begin{equation}
 \alpha(Y_0, Y_j)(h)=\left\< \Ad(h)(Y_0), Y_j\right \>.
\end{equation} 
 Also denote by $dh$ the normalised Haar measure on $H$, we define $$\bar Y_0\equiv \int_H \Ad(h)(Y_0)dh.$$

  \begin{proposition}
\label{thm-limit}
Suppose that the subgroup $H$ is compact, $\{A_0, A_1, \dots, A_N\}$ is a Lie algebra-generating subset of $\h$, and $Y_0\in \m$ is such that $\bar Y_0=0$. Let $T$ be a positive number. Then  as $\epsilon \to 0$,  $(u_{s\over \epsilon}^\epsilon, s\le T) $ converge
 weakly to a Markov process $(\bar u_s, s\le T)$, whose  Markov generator is 
$$\bar \L =-\sum_{i,j=1}^m \overline{ \alpha(Y_0, Y_i) \;(\L_0^{-1} \alpha(Y_0, Y_j)) }\;
L_{Y_i^*}L_{Y_j^*}.$$
Also, $(x_{s\over \epsilon}^\epsilon, s\le T)$ converges weakly 
to a stochastic process $(\bar x_s, s\le T)$.  
\end{proposition}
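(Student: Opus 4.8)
The plan is to apply Lemma \ref{thm-weak} to the random ODE \eqref{5-1-1} with coefficients $\alpha_j = \alpha(Y_0, Y_j)$, the fast process $(h_t)$ solving \eqref{5-1-2}, and the fixed vector fields $Y_k = Y_k^*$ (the left-invariant vector fields attached to an orthonormal basis of $\m$). First I would note that $\dot u_t^\epsilon = (\Ad(h_{t/\epsilon})Y_0)^*(u_t^\epsilon)$ may indeed be written in that form, since expanding $\Ad(h_{t/\epsilon})Y_0 \in \m$ (here $\Ad_H$-invariance of $\m$ is used, as $Y_0 \in \m$) in the orthonormal basis $\{Y_j\}$ gives coefficients $\langle \Ad(h_{t/\epsilon})Y_0, Y_j\rangle = \alpha(Y_0,Y_j)(h_{t/\epsilon})$. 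The hypothesis $\bar Y_0 = 0$ says precisely that $\int_H \alpha(Y_0,Y_j)\,dh = 0$ for every $j$, so by Lemma \ref{lemma4.6}(2) the Poisson equations $\L_0 \beta_j = \alpha(Y_0,Y_j)$ are solvable with $\beta_j = \L_0^{-1}\alpha(Y_0,Y_j) \in C^\infty(H)$; this is what makes the homogenisation machinery of Lemma \ref{thm-weak} run. Conservativeness of \eqref{5-1-1} and \eqref{5-1-2} is Corollary \ref{corollary3.5}.

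Next I would check the noncompactness hypotheses of Lemma \ref{thm-weak}, since $G$ is not assumed compact. A left-invariant Riemannian metric on $G$ has positive injectivity radius (Lemma \ref{le-symmetric}'s proof observes $G$ is geodesically complete with positive injectivity radius), giving the constant $2a$. For the bounds $C_1(p), C_2(p)$: the driving vector fields of \eqref{5-1-1} are $(\Ad(h_{t/\epsilon})Y_0)^*$, which are left-invariant for each fixed value of $h$, hence have constant (in $g$) Riemannian length $|\Ad(h_{t/\epsilon})Y_0|$; since $H$ is compact and $\Ad_H$ acts by isometries of the $\Ad_H$-invariant inner product, this length equals $|Y_0|$, a deterministic constant. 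So $C_1(p) \le |Y_0|^p < \infty$ uniformly. For $C_2(p)$, covariant derivatives of left-invariant vector fields are again bounded by left-invariance of the metric and the Koszul formula, so the relevant Hessian-type quantities $|\nabla d\rho^2(y^\epsilon_{t/\epsilon}, x)|$ restricted to the injectivity ball are bounded by a universal constant times the squared length, again uniformly in $\epsilon$. These verifications are routine but must be stated. Lemma \ref{thm-weak} then yields weak convergence of $(u^\epsilon_{s/\epsilon}, s \le T)$ to the Markov process with generator $-\sum_{i,j}\overline{\alpha(Y_0,Y_i)\,(\L_0^{-1}\alpha(Y_0,Y_j))}\,L_{Y_i^*}L_{Y_j^*}$, which is exactly $\bar\L$.

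For the second assertion, that $x^\epsilon_{s/\epsilon} = \pi(u^\epsilon_{s/\epsilon})$ converges weakly, I would invoke the continuous mapping theorem: $\pi: G \to M = G/H$ is smooth, hence continuous, and induces a continuous map on path spaces $C([0,T];G) \to C([0,T];M)$; weak convergence of $(u^\epsilon_{\cdot/\epsilon})$ therefore pushes forward to weak convergence of $(x^\epsilon_{\cdot/\epsilon})$ to $\bar x_\cdot := \pi(\bar u_\cdot)$. This does not by itself assert $(\bar x_s)$ is Markov — that is taken up later in the paper via the Peter–Weyl classification — so here one only claims convergence to a well-defined limiting process.

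The main obstacle is the noncompactness bookkeeping: showing the moment bounds $C_1, C_2$ of Lemma \ref{thm-weak} hold \emph{uniformly in $\epsilon$} and in the base point $x$, which is where the left-invariance of both the metric and the driving vector fields is essential and where one has to be a little careful that the localisation to injectivity balls interacts correctly with the It\^o-trick estimates inherited from \cite{Li-limits}. Everything else — rewriting the equation in the standard form, invoking solvability of the Poisson equation, and the continuous-mapping step — is essentially bookkeeping on top of Lemmas \ref{lemma4.6} and \ref{thm-weak}.
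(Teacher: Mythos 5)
Your proposal is correct and follows essentially the same route as the paper: rewrite \eqref{5-1-1} in the coordinates $\alpha(Y_0,Y_j)$, use $\bar Y_0=0$ with Lemma \ref{lemma4.6} for solvability of the Poisson equations, verify the $C_1,C_2$ moment bounds of Lemma \ref{thm-weak} via left-invariance of the metric and constancy of $|\Ad(h)Y_0|=|Y_0|$, and push the convergence to $M$ by continuity of $\pi$ (the paper phrases this last step dually, by lifting test functions from $M$ to $G$, but it is the same observation). Your explicit remark that the Markov property of $(\bar x_s)$ is not claimed here is also consistent with the paper's organisation.
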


\begin{proof}
By the left  invariance of the Riemannian metric, Equation (\ref{5-1-1}) is equivalent to
 $u_t^\epsilon= \sum_{j=1}^N \alpha(Y_0, Y_j)(h_{t\over \epsilon})Y_j^*(u_t^\epsilon )$ and
  \begin{equation*}
(\Ad(h)(Y_0))^*(g)=\sum_{j=1}^N \left\<( \Ad(h)(Y_0))^*, Y_j^*\right\> Y_j^*(g)
=\sum_{j=1}^N\alpha(Y_0, Y_j)(h) Y_j^*(g).
\end{equation*}
We may rewrite (\ref{5-1-1}) as
$ \dot u_t^\epsilon =\sum_{j=1}^N Y_j (u_t^\epsilon)\alpha(Y_0, Y_j) ( h_{t\over \epsilon})$.
Since $\bar Y_0$ vanishes  apply Lemma \ref{lemma4.6}, so
 $\L_0^{-1}\alpha_j$ exists and is smooth for each $j$.
Furthermore by Lemma \ref{le-symmetric}, the $\bar \L$ diffusions exist for all time.
Since the Riemannian metric on $G$ is left invariant, its Riemannian distance function $\rho$ is also left invariant, $\rho(gg_1, gg_2)=\rho(g_1,g_2)$ for any $g, g_1, g_2 \in G$. Furthermore,
 $\sup_{p:\rho(p,y)<\delta} \nabla^2 \rho(y, p)$ is finite, for sufficiently small  $\delta>0$, and is independent of $y$, where $\nabla$ is the Levi-Civita connection. Thus $C_1(p)$ and $C_2(p)$, from Lemma \ref{thm-weak},
  are both finite. We observe that $G$ has positive injectivity radius and bounded geometry, and we  then apply Lemma \ref{thm-weak} to conclude that  $(u^\epsilon_{s\over \epsilon}, s\le t)$ converges weakly as $\epsilon$ approaches $0$.
 As any continuous real valued function $f$ on $M$ lifts to a continuous function on $G$, the weak convergence passes, trivially,  to the processes $(x_{t\over \epsilon}^\epsilon)$. 
 \end{proof}
 
 For $p\ge 1$, denote by $W_p$ or $W_p(N)$ the Wasserstein distance on probability measures over a  metric space $N$. For $T$  a positive number,
let $C_x([0,T];N)$ denote the space of continuous curves defined on the interval $[0,T]$ starting from $x$.
Given two probability measures $\mu_1, \mu_2$ and $C_x([0,T];N)$,
$$W_p(\mu_1, \mu_2):=\left(\inf \int\sup _{0\le s\le T} \rho^p(\sigma_1(s),\sigma_2(s)) d\nu(\sigma_1, \sigma_2)\right)^{1\over p}.$$
Here $\sigma_1$ and $ \sigma_2$ take values in  $C_{x}([0,T];N)$ and 
the infimum is taken over all probability measures $\nu$  whose marginals are $\mu_1, \mu_2$.

\begin{remark}
The weak convergence of a sequence of stochastic processes $\{y^\epsilon_{\f s \epsilon}\}$, in the case where the limit belongs to $W_p$ and that  $ \E\sup_{s\le t} \rho(0, y^\epsilon_{\f s \epsilon})^p$ is uniformly bounded in $\epsilon$,
 implies their convergence in the Wasserstein $p$ distance. Thus, if $G$ is compact or has negative curvature and s.t. every point is a pole, 
our processes converge in the Wasserstein distance. The compact case is trivial. For the latter just note that in the proof of the lemma, $\tilde \rho^2$ can be replaced by $\rho^2$ and the left invariant vector fields are bounded. Since the sectional curvature is bounded,  $|\nabla d \rho^2|$ is bounded by curvature comparison theorem. Thus
$\rho(0, y^\epsilon_{\f s \epsilon})^2$ has finite moments, the assertion follows.
\end{remark}

\subsection{Rate of convergence.}

The definition of $BC^r$ functions on a manifold depend on the linear connection on $TM$, in general. 
We use the flat connection $\nabla^L$ for $BC^r$ functions on the Lie group $G$. 
The canonical connection $\nabla^c$ is a convenient connection for defining $BC^r$ functions on $M=G/H$, as  the parallel transports for $\nabla^c$ are differentials of the left actions of $G$ on $M$. See \S\ref{section8} below for more discussions on the canonical connection.
 In the theorem below,   $M$ is given the induced $G$-invariant Riemannian metric. 
Set \begin{equation*}
{\begin{aligned}
BC^r(M;\R)&=\{ f\in  C^r(M;\R):  |f|_\infty+ |\nabla f|_\infty+\sum_{k=1}^r  |(\nabla^c)^{(k)} df|_\infty<\infty  \}\\
BC^r(G;\R)&=\{ f\in  C^r(G;\R):  |f|_\infty+ |\nabla f|_\infty+\sum_{k=1}^r  |(\nabla^L)^{(k)} df|_\infty<\infty  \}.
\end{aligned}}
\end{equation*}
Denote  $|f|_{r, \infty}=\sum_{k=0}^r |\nabla^{(k)} df|_\infty$ where $\nabla$ is one of the connections above.
 \begin{remark}
  A function $f $ belongs to $BC^r(G)$ if and only if
  for an orthonormal basis $\{Y_i\} $ of $\g$, the following functions are bounded
  for any set of indices: $ f$, ${Y^*_{i_1}}f$, \dots, 
  ${Y^*_{i_1}}\dots {Y^*_{i_r}}f$.  In fact $\nabla f=\sum_i d f(Y_i^*)Y_i^*$ and  $\nabla^L_\cdot  \nabla f=\sum_i  Y_i^* L_\cdot  ({Y_i^*} f) $,
 and so on.  Here $L_\cdot$ denotes Lie differentiation so $L_\cdot f=df(\cdot)$.
 \end{remark}

 Denote by $\bar \mu$  the probability measure of $(\bar u_\cdot )$ and $\bar P_t$ the associated probability semigroup.  We give the rate of convergence, which essentially follows from
 \cite[Theorem 7.2]{Li-limits}, to follow which we would assume one of the following conditions. (1) $G$ is compact; 
 (2)  For some point $o\in G$, $\rho^2_o$ is smooth and $|\nabla^L d\rho^2_o|\le C+K\rho^2_o$.
(3) 
 There exist  $V\in C^2(M;\R_+)$, $c>0$, $K>0$ and $q\ge 1$ such that
$$|\nabla V|\le C+KV, \quad |\nabla dV|\le c+KV, \quad |\nabla d\rho^2_o|\le c+KV^q.$$
However it is better to prove the rate of convergence directly.

\begin{theorem}
\label{thm-limit-2}
Suppose that $H$ is compact and $\{A_1, \dots, A_N\}\subset$ is  Lie algebra generating and $\bar Y_0=0$.  
Then,
(1) Both $(u_{t\over \epsilon}^\epsilon, t \le T)$ and $(x_{t\over \epsilon}^\epsilon, t\le T)$ converge in $W_p$ 
for any $p>1$.\\
(2) There are numbers $c$ such that for all $f\in BC^4(G; \R)$,
  $$\sup_{0\le t\le T}\left|\E f\left( u_{t\over \epsilon}^\epsilon\right)-\bar P_tf (u_0)\right|
\le C \,\epsilon\, \sqrt{|\log \epsilon|} \;  (1+|f|_{4, \infty}), \quad u_0\in G.$$
(3) For  any $r\in (0,{1\over 4})$, 
$\sup_{0\le t\le T}W_1 (\law(u_{t\over \epsilon}^\epsilon), \bar\mu_t) \le C\epsilon^r$.
An analogous statement  holds for $x_{t\over \epsilon}^\epsilon$.
\end{theorem}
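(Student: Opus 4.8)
\emph{Proof outline.} All three parts follow by sharpening the weak convergence of Proposition~\ref{thm-limit} with quantitative bounds and then interpolating. Set $\tilde u^\epsilon_t:=u^\epsilon_{t/\epsilon}$ and $\eta^\epsilon_t:=h_{t/\epsilon^2}$; then $(\tilde u^\epsilon_t,\eta^\epsilon_t)$ is a Markov process on $G\times H$ whose generator is $\mathcal A^\epsilon=\tfrac1{\epsilon^2}\L_0^{(h)}+\tfrac1\epsilon L_b$, where $\L_0^{(h)}$ acts in the $H$-variable, $L_b\phi(g,h)=\sum_j\alpha_j(h)\,(L_{Y_j^*}\phi(\cdot,h))(g)$ with $\alpha_j:=\alpha(Y_0,Y_j)$, and $\beta_j:=\L_0^{-1}\alpha_j$ is a well defined smooth function on $H$ by Lemma~\ref{lemma4.6}, since $\bar Y_0=0$ forces $\int_H\alpha_j\,dh=0$.

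\emph{Part (1).} Proposition~\ref{thm-limit} gives weak convergence of $(u^\epsilon_{s/\epsilon})_{s\le T}$ to $(\bar u_s)_{s\le T}$ in $C([0,T];G)$, tightness coming from Kolmogorov's criterion applied to the bound $\E\,\tilde\rho^{2p}(u^\epsilon_{s/\epsilon},u^\epsilon_{t/\epsilon})\le c_p|t-s|^{p/2}$ obtained in the proof of Lemma~\ref{thm-weak}. To upgrade this to $W_p$ on path space I would check, uniformly in $\epsilon$, that $\E\sup_{s\le T}\rho(u_0,u^\epsilon_{s/\epsilon})^p<\infty$: a left invariant metric on $G$ is complete with positive injectivity radius and bounded geometry, hence admits a Lyapunov function as in \cite[Theorem~7.2]{Li-limits}, and the same It\^o-trick estimate that yields tightness gives this moment bound. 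Weak convergence together with uniform $p$-moments of the supremum forces convergence in $W_p(C_{u_0}([0,T];G))$ for every $p>1$. Since the induced $G$-invariant metric makes $\pi$ a Riemannian submersion, $\pi$ is $1$-Lipschitz, and pushing forward gives the statement for $(x^\epsilon_{t/\epsilon})$.

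\emph{Part (2).} I would prove this directly by a corrector/perturbed-test-function argument rather than invoking \cite[Theorem~7.2]{Li-limits}. Fix $t\in[0,T]$, $f\in BC^4(G;\R)$ and set $v(s,g):=(\bar P_{t-s}f)(g)$, so $v(t,\cdot)=f$ and $\partial_s v+\bar\L v=0$. Put $\Phi_1:=-\sum_j\beta_j L_{Y_j^*}v$ and define
\[
\Phi^\epsilon(s,g,h)=v(s,g)-\epsilon\sum_j\beta_j(h)\,L_{Y_j^*}v(s,g)+\epsilon^2\Phi_2(s,g,h),
\]
where, for each fixed $(s,g)$, $\Phi_2(s,g,\cdot)$ solves on $H$ the Poisson equation $\L_0^{(h)}\Phi_2=-\big(L_b\Phi_1-\overline{L_b\Phi_1}\big)$; this is solvable because $\overline{L_b\Phi_1}=\bar\L v$ (exactly the definition of $\bar\L$ in Proposition~\ref{thm-limit}), and its solution is smooth in $H$ with $C^2(H)$-norms bounded by $|v(s,\cdot)|_{3,\infty}\le\sup_{r\le T}|\bar P_r f|_{3,\infty}$, which is finite (and $\le C|f|_{4,\infty}$) since $G$ has bounded geometry and, by Lemma~\ref{le-symmetric}, the $\bar\L$-diffusion is conservative with spatial derivatives under control even though $\bar\L$ is degenerate (it only involves directions in $\m$). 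By construction the terms of order $\epsilon^{-2},\epsilon^{-1},\epsilon^0$ in $(\partial_s+\mathcal A^\epsilon)\Phi^\epsilon$ cancel (the $\epsilon^0$ identity being precisely $\partial_s v+\bar\L v=0$), so $\big|(\partial_s+\mathcal A^\epsilon)\Phi^\epsilon\big|\le C\epsilon\,(1+|f|_{4,\infty})$. Applying Dynkin's formula to $s\mapsto\Phi^\epsilon(s,\tilde u^\epsilon_s,\eta^\epsilon_s)$ on $[0,t]$ — the martingale part, carried entirely by the fast diffusion, having zero mean — and using $\Phi^\epsilon(t,\cdot)=f+O(\epsilon)$, $\Phi^\epsilon(0,\cdot)=\bar P_t f(u_0)+O(\epsilon)$, one gets $\big|\E f(\tilde u^\epsilon_t)-\bar P_t f(u_0)\big|\le C\epsilon\,(1+|f|_{4,\infty})$ with a constant independent of $t\le T$. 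The extra $\sqrt{|\log\epsilon|}$ in the statement is the loss incurred when, on the non-compact $G$, one localises the distance function as with the truncation $\tilde\rho$ of Lemma~\ref{thm-weak} and controls the supremum over $t$ by a maximal inequality, exactly as in the proof of \cite[Theorem~7.2]{Li-limits}.

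\emph{Part (3) and the main obstacle.} For a $1$-Lipschitz $f:G\to\R$ I would mollify at scale $\eta\in(0,a)$ in the exponential chart, obtaining $|f-f_\eta|_\infty\le C\eta$ and $|f_\eta|_{4,\infty}\le C\eta^{-3}$ by bounded geometry. Part~(2) then gives $\big|\E f(u^\epsilon_{t/\epsilon})-\int f\,d\bar\mu_t\big|\le C\eta+C\epsilon\sqrt{|\log\epsilon|}\,\eta^{-3}$, and optimising $\eta\sim(\epsilon\sqrt{|\log\epsilon|})^{1/4}$ yields $W_1(\law(u^\epsilon_{t/\epsilon}),\bar\mu_t)\le C(\epsilon\sqrt{|\log\epsilon|})^{1/4}\le C\epsilon^r$ for every $r<\tfrac14$, uniformly in $t\le T$; the analogous statement for $x^\epsilon_{t/\epsilon}$ follows by pushing forward along the $1$-Lipschitz map $\pi$, with $\bar\mu_t$ replaced by $\pi_*\bar\mu_t$. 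The genuinely delicate step is part~(2): one must establish solvability and $C^2(H)$ estimates for the two Poisson equations on $H$ (Lemma~\ref{lemma4.6} for the first, the same argument for the second once $\overline{L_b\Phi_1}$ is identified as the average), control $\bar P_r$ in the norms $|\cdot|_{3,\infty}$ uniformly for $r\le T$ on the possibly non-compact $G$ — where the degeneracy of $\bar\L$ is harmless because only boundedness, not smoothing, is needed — and then pass from the fixed-time estimate to the uniform-in-$t$ one on $G$, which is where the logarithmic factor enters; everything else is bookkeeping with the corrector terms.
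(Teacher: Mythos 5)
Your overall structure is sound and tracks what the theorem needs, but part (2) goes by a genuinely different route than the paper. The paper cites \cite[Theorem~7.2]{Li-limits}, whose proof is a time-discretisation/telescoping argument combined with the quantitative mixing of $(h_t)$ and the semigroup estimate $|\bar P_t f-\bar P_s f-(t-s)\bar P_s\bar\L f|\le C(t-s)^2$; the paper's task is then to verify the hypotheses of that theorem (bounded geometry of a left-invariant metric, boundedness of the torsion $\T^L$ and hence of the covariant derivatives of $Y_j^*$, and that $f\circ\pi\in BC^4(G)$ when $f\in BC^4(M)$). You instead construct a two-level perturbed test function $\Phi^\epsilon=v-\epsilon\sum_j\beta_j L_{Y_j^*}v+\epsilon^2\Phi_2$ and apply Dynkin. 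This is a legitimate and in some respects cleaner argument, and the cancellation bookkeeping you describe for the orders $\epsilon^{-2},\epsilon^{-1},\epsilon^0$ is correct once $\Phi_2$ solves the appropriate Poisson equation on $H$.

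The gap is in your account of the rate. As you set it up, the Dynkin computation gives $|\E f(\tilde u_t^\epsilon)-\bar P_t f(u_0)|\le C\epsilon(1+|f|_{4,\infty})$ with a constant that is already uniform in $t\le T$, because $v$, $\Phi_1$, $\Phi_2$ and the remainder are controlled by $\sup_{r\le T}|\bar P_r f|_{3,\infty}$. Your corrector argument produces no $\sqrt{|\log\epsilon|}$, and it does not require a maximal inequality in $t$ to get the supremum — the fixed-$t$ bound already has a $t$-uniform constant. Attributing the logarithm to "localising the distance function and a maximal inequality exactly as in \cite[Theorem~7.2]{Li-limits}" conflates your method with the paper's: the $\sqrt{|\log\epsilon|}$ in Li's theorem is an artefact of the discretisation there, not a consequence of non-compactness that any method must pay. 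Either you should commit to the sharper $O(\epsilon)$ rate — in which case you owe a careful verification, on non-compact $G$, that $\sup_{r\le T}|\bar P_r f|_{3,\infty}\lesssim|f|_{3,\infty}$ (this is where the paper's observations that $\T^L(u,v)=-[u,v]$ is bounded and that the derivative flow of the $\bar\L$-SDE obeys a linear ODE with bounded coefficients are doing real work, and you only gesture at it via "bounded geometry"), that $\Phi_2$ together with its $H$-derivatives is bounded uniformly in $(s,g)$ (a Fredholm/sub-elliptic estimate on $H$ via Lemma~\ref{lemma4.6}), and that the Dynkin martingale is a true martingale — or you should identify concretely where in your own computation a logarithm would enter, which as written it does not. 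Part (3), mollifying a Lipschitz $f$ at scale $\eta$ and optimising, is a clean way to pass from the $BC^4$ estimate to $W_1$ and is consistent with the paper; note the exponent $1/4$ matches the ratio between the order of the estimate ($\epsilon$, up to logs) and the number of derivatives lost under mollification ($\eta^{-3}$), so the rate in (3) is exactly dictated by whatever you prove in (2). Part (1) is essentially the paper's argument.
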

\begin{proof}
 If $\rho^2$ is bounded or smooth, $L_{Y_i^*}L_{Y_j^*} \rho=\nabla^L d\rho (Y_i^*, Y_j^*)+ \nabla^L _{Y_i^*}Y_j^*$ is bounded. Denote $\T^L$ the torsion of $\nabla^L$ and $\nabla$  the Levi-Civita connection. The derivative flow for  (\ref{5-1-1}) satisfying the following equation $$\nabla v_t^\epsilon= -\f 12\sum_j  \alpha(Y_0, Y_j) (h_t^\epsilon) \,\T^L(v_t^\epsilon, Y_j(u_t^\epsilon)).$$
Indeed this follows from linearising the equation $ \dot u_t^\epsilon =\sum_{j=1}^N Y_j ^*(u_t^\epsilon)\alpha(Y_0, Y_j) ( h_{t\over \epsilon})$ and the relation $\nabla^L=\nabla+\f 1 2\T^L$.
 %where $\hat \nabla^L$ is the adjoint connection defined by $\hat \nabla^L=\nabla-{1\over 2}\T^L$.
 Since $\T^L(u,v)=-[u,v]$, the torsion tensor and their covariant derivatives are bounded. 
 Thus all covariant derivatives of $Y_j^*$ with respect to the Levi-Civita connection are bounded.
 We use Lemma \ref{thm-limit} and follow the proof of \cite[Theorem 7.2]{Li-limits},
 to conclude the first assertion for the $u$ process. These are proved by discretising time and 
 writing the differences as telescopic sums. The main ingredients are:
 (1)  $(h_t)$ has an exponential mixing rate which follows from H\"ormander's conditions,
 (2) estimates for $|\bar P_tf- \bar P_s f-(t-s) \bar P_s \bar  \L f | \le C (t-s)^2$ which follows from 
 the fact that the vector fields $Y_j$ have bounded derivatives of all order.

 For the process on the homogeneous manifold, take $f\in BC^4(M;\R)$ and let $\tilde f=f\circ \pi$.
 Then ${Y_i}{Y_j} \tilde f= \nabla^c d f(\pi_*Y_i^*, \pi_*Y_j^*)+df\left(\nabla^c_{Y_i^*} d\pi (Y_j^*)\right)$.
 The last term vanishes, as $  (d\pi)_u(Y_i^*) =T\bar L_u (  (d\pi)_1Y_i))$, c.f. Lemma \ref{nabla} below.
 Since $\pi_*$ is a Riemannian isometry, ${Y_i^*}{Y_j^*} \tilde f$ is bounded if $\nabla^cdf$ is. The same assertion holds for higher order derivatives. Thus $f\circ \pi \in BC^4(G; \R)$ and
$$\sup_{0\le t\le T}\left|\E f\left( x_{t\over \epsilon}^\epsilon\right)-\pi_*\bar P_tf (x_0)\right|
\le  C\,\epsilon\, \sqrt{|\log \epsilon|}\; \gamma(x_0) \;  (1+|f|_{4, \infty}),$$
where $\gamma$ is a function in $B_{\rho,o}$.

For the convergence in the Wasserstein distance, denote by $\tilde \rho$ and $\rho$ respectively 
 the Riemannian distance function on $G$ and on $M$.
For $i=1,2$, let $x_i\in M$ and $u_i\in \pi^{-1}(x_i)$. If $u_1$ and $u_2$ are the end points of a horizontal lift of the unit speed geodesic connecting $x_1$ and $ x_2$, then $\tilde \rho(u_1, u_2) = \rho(x_1, x_2)$. Otherwise  $\tilde \rho(u_1, u_2) \ge \rho(x_1, x_2)$.
If $c_1$ and $c_2$ are $C^1$ curves  on $M$ with $c^1$ curves $\tilde c_1$ and $ \tilde c_2$ on $G$ covering $c_1$
and $c_2$ respectively, then $ \rho(c_1, c_2)=\sup_{t\in [0,T]} \rho(c_1(t), c_2(t))\le   \tilde \rho(\tilde c_1, \tilde c_2)$.

Since $C_{u_0}([0,T];G)$ is a Polish space, there is an optimal  coupling  of 
 the probability law of $u_{\cdot \over\epsilon}^\epsilon$ and $\bar \mu$ which we denote by $\mu$.
 Then $\pi_*\bar\mu$ is a coupling of $\law(x_{\cdot\over \epsilon}^\epsilon)$ 
 and $\pi_*\bar\mu$ and
\begin{equation*}
{\begin{aligned}
W_p(\law(u^\epsilon_{\cdot \over \epsilon}), \bar \mu)
&=\left(\int_{C_{u_0}([0,T];G)}\tilde \rho^p (\gamma_1, \gamma_2) d\mu(\gamma_1, \gamma_2)\right)^{1\over p}
\\
&\left(
 \int_{C_{x_0}([0,T];M)}\rho^p ( \pi(\gamma_1), \pi(\gamma_2) ) d\mu(\gamma_1, \gamma_2)\right)^{1\over p}
\\
&= \left(
 \int_{C_{x_0}([0,T];M)} \sup_{0\le t\le T} \rho^p(\sigma_1(t), \sigma_2(t)) d\pi_*\bar \mu (\sigma_1, \sigma_2) \right)^{1\over p}
 \\
&\ge  W_p(\law (x^\epsilon_{\cdot \over \epsilon}), \pi_*\bar \mu).
\end{aligned}}
\end{equation*}
Consequently $x_{\cdot \over\epsilon}^\epsilon$ converges in $W_p$. Similarly
the rate of convergence passes from the $u$ process to the $x$ process. For $r<{1\over 4}$ there is a number $C$ such that,
$W_1(\law(x_{t\over \epsilon}^\epsilon), \pi_*\bar \mu_t)\le W_p(\law(u^\epsilon_{t \over \epsilon}), \bar \mu)\le C \gamma(x_0)\epsilon^r$.
\end{proof}

 \subsection{The Centre Condition}
 \label{sectionConditions}

We identify those vectors $Y_0$ satisfying $\bar Y_0=0$. Given a reductive structure,  $\Ad_H$ is the direct sum of sub representations $\Ad_H=\Ad_H|_{\h}\oplus \Ad_H|_\m$. 
The condition $\Ad(H)(\m)\subset\m$ implies that $[\h, \m]\subset \m$ and $\Ad(H)(\m)=\m$.
Let
$$\m_0=\{X\in \m: \Ad(h)(X)=X \text{ for all }  h\in H\},$$ be the subspace on which $\Ad_H$ acts trivially. 
We consider an $\Ad_H$-invariant subspace space $\tilde \m$ of $\m$, transversal to $\m_0$, i.e. $\tilde\m\cap \m_0=\{0\}$.  It is  more intuitive to study the action of $\Ad_H$
 through its isotropy representation $\tau$ of $H$ on $T_oM$ which is defined by the formula $\tau_h(\pi_* X):=(\bar L_h)_* (\pi_*X)$.  
  Since left translation on $T_oM$ corresponds to adjoint action on $\m$, then
 $\pi_*\Ad(h)(X)$ agree with $(\bar L_h)_*(\pi_*X)$,  the linear representation $\Ad_H$ is equivalent to the isotropy representation. 
 A representation $\rho$ of $H$ is said to acts transitively on the unit sphere, of the representation space $V$,
if  for any two unit vectors in $V$ there is a $\rho(h)$ taking one to the other. If $\Ad_H$ acts transitively, its representation space is irreducible and $\m_0=\{0\}$.

Recall that $H$ is unimodular is equivalent to that the Haar measure $dh$ is bi-invariant.  

\begin{lemma}
\label{le:transitive}
Suppose that $H$ is uni-modular.
\begin{itemize}
\item 
[(1)] If $Y\in \m_0$ is non-trivial,  then $\bar Y$ does not vanish.
\item[(2)]  $\bar Y=0$ for every  $Y\in \tilde \m$.
In particular,  if $\Ad_H|_\m$ has no non-trivial invariant vectors,  then $\bar Y=0$ for all $Y\in \m$. 
\end{itemize}
\end{lemma}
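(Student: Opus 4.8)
The plan is to introduce the averaging operator $P\colon\g\to\g$, $P(Y)=\int_H\Ad(h)(Y)\,dh=\bar Y$, and to deduce both parts from its elementary properties. Since $H$ is unimodular the Haar measure $dh$ is bi-invariant, and we normalise it to have total mass one. First I would record three facts, all immediate from the definition: (i) $P$ is linear; (ii) if $V\subseteq\g$ is any $\Ad_H$-invariant subspace, then $P(V)\subseteq V$, because $h\mapsto\Ad(h)(Y)$ takes values in $V$ and $V$, being finite-dimensional, is closed, so its integral stays in $V$; (iii) $\Ad(h_0)\circ P=P$ for every $h_0\in H$, since by left-invariance of $dh$ one has $\int_H\Ad(h_0h)(Y)\,dh=\int_H\Ad(h)(Y)\,dh$. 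Applying (ii) with $V=\m$ together with (iii) shows $P(\m)\subseteq\m_0$, and if $Y$ is already $\Ad_H$-fixed then $P(Y)=Y$.

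For part (1), suppose $Y\in\m_0$ is non-zero. Then $\Ad(h)(Y)=Y$ is constant in $h$, so $\bar Y=P(Y)=Y\int_H dh=Y\neq 0$.

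For part (2), let $Y\in\tilde\m$. By (ii) with $V=\tilde\m$ we get $P(Y)\in\tilde\m$; on the other hand $\tilde\m\subseteq\m$ with $\m$ being $\Ad_H$-invariant, so by (ii)--(iii) we also get $P(Y)\in\m_0$. Hence $\bar Y=P(Y)\in\tilde\m\cap\m_0=\{0\}$, which is the claim. The ``in particular'' statement follows by taking $\tilde\m=\m$: if $\Ad_H|_\m$ has no non-trivial invariant vectors then $\m_0=\{0\}$, so $\m$ is trivially transversal to $\m_0$ and $\bar Y=0$ for all $Y\in\m$.

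There is essentially no real obstacle; the only points needing a moment's care are that it is the \emph{left}-invariance of $dh$ that forces $P(Y)$ to be $\Ad_H$-fixed — which is exactly why unimodularity is assumed, so that the right-invariant Haar measure used elsewhere in the paper is also left-invariant — and, should one want the statement for a non-compact unimodular $H$, that the integral defining $\bar Y$ converges, which is automatic in the compact case of interest. An equivalent route to (2), which I might mention instead, decomposes $\tilde\m$ into $\Ad_H$-irreducibles: each summand is non-trivial because $\tilde\m\cap\m_0=\{0\}$, and $\int_H\Ad(h)|_V\,dh=0$ on any non-trivial irreducible $V$ by Schur's lemma, giving $\bar Y=0$ directly.
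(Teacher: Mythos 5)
Your proof is correct and follows essentially the same route as the paper: both identify the key observation that $\bar Y=\int_H\Ad(h)(Y)\,dh$ is itself $\Ad_H$-invariant (using bi-invariance of $dh$, i.e.\ unimodularity), lies in the $\Ad_H$-invariant subspace $\tilde\m$, and hence belongs to $\tilde\m\cap\m_0=\{0\}$. Your writeup is simply a more explicit version, spelling out the averaging operator $P$ and its three properties, and the Schur's-lemma aside is a valid alternative phrasing of the same fact.
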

\begin{proof}
Part (1) is clear by the definition. For any $Y\in\tilde  \m$, the  integral $\bar Y:=\int_H \Ad(h)(Y)dh$ is an invariant vector of $\Ad_H$, using the bi-invariance of the measure $dh$. Since $\bar Y\in  \tilde \m$ and $\tilde  \m\cap \{\m_0\}=\{0\}$ the conclusion follows.
 \end{proof}

  \begin{example}
 Let $M$ be the Stiefel manifold $S(k,n)$ of oriented $k$ frames in $\R^n$.
 The orthogonal group takes a $k$-frame to a $k$ frame and acts transitively.
 The isotropy group of the $k$-frame $o=(e_1, \dots, e_k)$, the first $k$ vectors from
 the standard basis of $\R^n$, contains rotation matrices that keep the
 first $k$ frames fixed and rotates the rest. Hence,
 $S(k,n)=SO(n)/SO(n-k)$. Then
 $\h=\left\{\left(
 \begin{matrix} 0&0\\0&A\end{matrix} \right)  \right\}$ where $A\in \so(n-k)$.
 Denote by $ M_{n-k, k}$ the set of $(n-k)\times k$ matrices and let
 $ \m=\left\{\left(
 \begin{matrix} S&-C^T\\C&0\end{matrix} \right)  \right\}$ where $S\in \so(k)$
  and $C \in M_{n-k, k}$.
Since $\Ad\left(   \left(
 \begin{matrix} 1&0\\0&R\end{matrix} \right)\right)  \left(\left(
 \begin{matrix} S&-C^T\\C&0\end{matrix} \right)\right)=\left(
 \begin{matrix} S&-C^TR^T\\ RC&0\end{matrix} \right)$ for $R\in SO(n-k)$, 
we see  $ \m_0=\left\{\left(
 \begin{matrix} S&0\\0&0\end{matrix} \right) \right\}$. Define
  $  \tilde \m=\left\{\left(
 \begin{matrix} 0&-C^T\\C&0\end{matrix} \right)  \right\}$.
Let us identify $\tilde \m$ with $(n-k)\times k$ matrices. 
For $C\in M_{n-k, k}$ denote by $Y_0$ the corresponding skew symmetric matrix in $\tilde  m$.
It is clear that $\int_H RC dR$ and $\int_H \Ad(R)(Y_0) dR$ vanish.
 \end{example}
 
The symmetry group of a  Riemmannian homogeneous space of dimension $d$ has at most dimension $d(d+1)/2$.
If $G/H$ is a connected $d$-dimensional manifold and if $G$ admits an $\Ad_H$-invariant inner product with $\dim(G)=\f 12 d(d+1)$, then $\bar Y$ vanishes for all $Y\in \m$. Such Riemannian homogenous manifolds are of constant curvature, isometric to one of the following spaces: an Euclidean space, a sphere, a real projective space, and a simply connected hyperbolic space.

\begin{example}
Suppose that $M=G/H$ is a symmetric space,
 it is in particular a reductive homogeneous space.
 Then $L_h \pi=\pi \Ad(h)$
and  $(dL_h)_o d\pi =d\pi \Ad_h$, where $o$ is the identity coset. Since $d\pi$ restricts to an isomorphism
on $\m$, left actions correspond to the $\Ad$ action on $G$. For every $Y_0$ there 
is an $h$ with the property that $\Ad(h)(Y_0)=-Y_0$.
%This last property holds if the symmetry is replaced by locally symmetric spaces.
A compact Lie group with the bi-invariant metric is a symmetric space.
\end{example}

\begin{example}
A more general class of  manifolds, for which  $\bar Y_0=0$ for every  $Y_0\in \m$,
are  weakly symmetric spaces among the Riemannian homogeneous spaces.
A Riemannian manifold is weakly symmetric 
if there exists some closed subgroup $K$ of the Isometry group $I(M)$ and some $\mu\in I(M)$ such that $\mu^2\in K$ and for all $p,q\in M$ there exists $I\in K$ such that $I(p)=\mu(q)$ and  $I(q)=\mu(p)$. This concept was introduced in \cite[A. Selberg]{Selberg}
and studied by Szab\'o who characterises them as ray symmetric space.
Also, a Riemannian manifold is weakly symmetric if and only if any two points  in it can be interchanged by some isometry, see  \cite[J. Berndt, L. Vanhecke]{Berndt-Vanhecke}.  The following property characterizes weakly symmetric spaces among Riemannian homogeneous spaces: let $H$ be the isotropy subgroup of $I(M)$ 
and $\tau: H\to GL(T_oM)$ its isotropy representation. Then for each $X\in \h$ there exists an element of $h\in H$ such that 
$\rho(h)(X)=-X$.    Some of the weakly symmetric spaces are given in the form of
$G/H$ where $G$ is a connected semi simple Lie group, $K$ a closed subgroup of $G$, and 
$H$ a closed subgroup $K$. Then there  is a principal fibration $G/H\to G/K$ with fibre $K/H$.
\end{example}

The three spheres with the left invariant Berger's metrics, $\epsilon\not =1$, are not locally Riemannian symmetric spaces, they are reductive homogeneous spaces. 
We remark also that irreducible symmetric spaces are in fact strongly
isotropy irreducible. A connected homogeneous manifold $G/H$ is strongly
isotropy irreducible if $H$ is compact and its identity component acting irreducibly on the tangent space. Such spaces admit left invariant Einstein metric and are completely classified, for more study see \cite{Besse} and  see also \cite{Burstall-Rawnsley}. If the symmetric space has rank $1$, i.e. it is the quotient space of a semi-simple Lie group $G$ whose maximal torus group $H$ is of dimension $1$, they are two point homogeneous spaces, see \cite[Prop. 5.1]{Helgason}.

\begin{theorem}
\label{convergence-and-limit}
Suppose that $H$ is compact and $\{A_0, A_1, \dots, A_N\}$ is Lie algebra generating. Let $Y_0 \in \tilde\m$, $x_t^\epsilon=\pi(g_t^\epsilon)$, where $(g_t^\epsilon)$ is the solution to  (\ref{1}), and $ (u_t^\epsilon)$ its horizontal lift through $g_0$.   
Then $(u_{t\over \epsilon}^\epsilon) $ and $(x_{t\over \epsilon}^\epsilon) $ converge
weakly with respective limit diffusion process $(\bar u_t)$ and $(\bar x_t)$.
Let $\{Y_j, j=1,\dots, N\}$  be an orthonormal basis of $\tilde \m$. Then
the Markov generator of $(\bar u_t)$ is 
$$\bar \L =-\sum_{i,j=1}^{N}a_{i,j}(Y_0)\;L_{Y_i^*}L_{Y_j^*}, \quad a_{i,j}(Y_0)=\int_H  \alpha(Y_0,Y_i) \,\L_0^{-1} ( \alpha(Y_0, Y_j) ) dh.$$
  \end{theorem}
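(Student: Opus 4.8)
The plan is to obtain this statement by assembling the results already proved, so the argument is essentially a matter of checking that the hypotheses feed correctly into Proposition \ref{thm-limit}. First I would record the standing reductions: since $H$ is compact it is unimodular and carries an $\Ad_H$-invariant inner product on $\g$; fix such an inner product, use the induced left-invariant metric on $G$ and $G$-invariant metric on $M$, and take the $\Ad_H$-invariant decomposition of $\m$ to be orthogonal, so that $\tilde\m$ has an $\Ad_H$-invariant orthogonal complement inside $\m$. Because $Y_0\in\tilde\m$ and $\tilde\m\cap\m_0=\{0\}$, Lemma \ref{le:transitive}(2) (applicable as $H$ is unimodular) gives $\bar Y_0=\int_H \Ad(h)(Y_0)\,dh=0$, which is the centring condition required by the convergence results.

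Next I would invoke Corollary \ref{corollary3.5}: the solution $(g_t^\epsilon)$ of (\ref{1}) satisfies $\pi(g_t^\epsilon)=\pi(u_t^\epsilon)$ for all $t$, and $(g_t^\epsilon)$ and $(u_t^\epsilon h_{t/\epsilon})$ are equal in law, where $(u_t^\epsilon,h_t)$ solves the reduced system (\ref{5-1-1})--(\ref{5-1-2}) with $u_0^\epsilon=g_0$, $h_0=1$. Since $Y_0\in\m$ and $\Ad(h)(\m)\subset\m$, the driving vector fields $(\Ad(h_{t/\epsilon})Y_0)^*$ of (\ref{5-1-1}) are horizontal for the Ehresmann connection determined by $\m$; hence $(u_t^\epsilon)$ is a horizontal curve covering $x_t^\epsilon=\pi(g_t^\epsilon)$ issuing from $g_0$, and by uniqueness of horizontal lifts it is precisely the horizontal lift of $(x_t^\epsilon)$ through $g_0$ named in the statement. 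Therefore the weak convergence of $(u_{t/\epsilon}^\epsilon)$ to $(\bar u_t)$ and of $(x_{t/\epsilon}^\epsilon)$ to $(\bar x_t)$, together with the fact that $(\bar u_t)$ is a Markov process with generator $-\sum_{i,j}\overline{\alpha(Y_0,Y_i)(\L_0^{-1}\alpha(Y_0,Y_j))}\,L_{Y_i^*}L_{Y_j^*}$ where $\{Y_j\}$ is an orthonormal basis of $\m$, is exactly the content of Proposition \ref{thm-limit}.

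It remains to rewrite this generator as a sum over an orthonormal basis of $\tilde\m$. If $Y_j$ lies in the $\Ad_H$-invariant orthogonal complement of $\tilde\m$ in $\m$, then $\alpha(Y_0,Y_j)(h)=\langle \Ad(h)(Y_0),Y_j\rangle=0$ for all $h$, since $\Ad(h)(Y_0)\in\tilde\m$; consequently $\L_0^{-1}\alpha(Y_0,Y_j)=0$ as well, so only the pairs $Y_i,Y_j\in\tilde\m$ contribute to the sum. Finally, by Lemma \ref{lemma4.6}(1) the unique invariant probability measure of $\L_0$ on $H$ is the normalised Haar measure $dh$, so the averaging bar in Proposition \ref{thm-limit} is integration against $dh$, and $\overline{\alpha(Y_0,Y_i)\,\L_0^{-1}\alpha(Y_0,Y_j)}=\int_H \alpha(Y_0,Y_i)\,\L_0^{-1}(\alpha(Y_0,Y_j))\,dh=a_{i,j}(Y_0)$, which yields the asserted formula for $\bar\L$.

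I do not expect a serious obstacle here: the genuine analytic work — tightness of $\{u_{\cdot/\epsilon}^\epsilon\}$ via the It\^o trick, exponential mixing of $(h_t)$ from H\"ormander's condition, and identification of the limit through the Poisson equation $\L_0\beta_j=\alpha_j$ — is already packaged in Lemma \ref{thm-weak} and Proposition \ref{thm-limit}. The only points needing care are verifying that the reduced system of Corollary \ref{corollary3.5} is literally in the form to which Proposition \ref{thm-limit} applies (matching drivers, matching initial points, and the horizontality of $(u_t^\epsilon)$) and the vanishing of the off-$\tilde\m$ coefficients $\alpha(Y_0,Y_j)$, both of which are routine once the orthogonality of the $\Ad_H$-invariant decomposition of $\m$ is in hand.
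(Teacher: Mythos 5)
Your proof is correct and follows essentially the same route as the paper's: reduce to the system (\ref{5-1-1})--(\ref{5-1-2}) via Corollary \ref{corollary3.5}, obtain the centring condition $\bar Y_0=0$ from Lemma \ref{le:transitive}, apply Proposition \ref{thm-limit}, and then restrict the basis to $\tilde\m$ using the $\Ad_H$-invariance of $\tilde\m$. You supply a bit more detail than the paper does — the explicit reason $(u_t^\epsilon)$ is the named horizontal lift, the vanishing of $\alpha(Y_0,Y_j)$ for $Y_j\perp\tilde\m$, and the identification of the averaging measure with Haar via Lemma \ref{lemma4.6} — but the skeleton and the key ingredients are identical.
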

\begin{proof}
By Corollary \ref{corollary3.5}  the horizontal lift  process of $(x_t^\epsilon)$
solves equations  (\ref{5-1-1}-\ref{5-1-2}). By Lemma \ref{le:transitive}, $\bar Y_0=0$. Since $\tilde \m$ is an invariant
space of $\Ad_H$ and $Y_0\in \tilde \m$,   $\Ad(h)(Y_0)\in \tilde \m$. 
In Proposition \ref{thm-limit} it is  sufficient to take $\{Y_j\}$ to be an orthonormal basis of the invariant subspace $\tilde \m$.
\end{proof}

 In the rest of the paper we study the effective limits $\bar \L$ given in Theorem \ref{convergence-and-limit}. 
 Although $\bar u_t$ is always a Markov process,  that its projection $(\bar x_t)$ (describing the motion of the 
effective orbits) is a Markov process on its own is not automatic.  In the next two sections we study this and identify the stochastic processes
$(\bar u_t)$ and  $(\bar x_t)$ by computing the Markov generator $\bar \L$ and its projection.

 \section{Effective limits, casimir, and  Markov property}
 \label{section5}
In this section $H$ is a compact connected proper subgroup of $G$, $\L_0=\f 12 \sum_{k=1}^N (A_k)^2$, $\<, \>$ an $\Ad_H$-invariant inner product on $\g$ and 
   $\m=\h^\perp$. Let $\m=\m_0\oplus \m_1\oplus \dots \oplus \m_r$ be the orthogonal sum of $\Ad_H$-invariant subspaces,  where each  $\m_j$ is irreducible for $j\not =0$. This is also  an invariant decomposition for $\ad_\h$.

Let $\B_{\ad_\h, \m_l}(X,Y)$ be the symmetric associative bilinear form of  the adjoint sub-representation of 
$\h$ on $\m_l$. 
The symmetric bilinear form for a representation $\rho$ of a Lie algebra $\g$ on a 
finite dimensional vector space is given by the formula
$\B_\rho(X,Y)=\trace \rho(X)\rho(Y)$ for $ X, Y\in \h$. 
A bilinear form $\B$ on a Lie algebra is associative if $\B([X,Y],Z])=\B(X,[Y,Z])$.   Let $\id_{\m_l}$ be the identity map on~$\m_l$.
The following lemma allows us to compute the coefficients of the generator.

 \begin{lemma}\label{lambda}
 Let $\{A_j\}$ be an orthonormal basis of $\h$.
The following statements hold. \begin{enumerate}
\item There exists $\lambda_l$ such that
 ${1\over 2}\sum_{k=1}^p \ad^2(A_k)|_{\m_l}=- \lambda_l\id_{\m_l} $. Furthermore $\lambda_l=0$ if and only if $l=0$.

\item  Suppose that $\ad_\h:\h\to {\mathbb L}(\m_l;\m_l)$ is faithful. 
Then $\B_{\ad_\h, \m_l}$ is non-degenerate, negative definite, and $\Ad_H$-invariant.
If $\l\not =0$, 
$$\lambda_l=-\B_{\ad_h,\m_l}(X,X)  {\dim(\h) \over 2\dim(\m_l)},$$
where $X$ is any unit vector in $\h$.
If the inner product on $\h$ agrees with $-\B_{\ad_h,\m_l}$ then $\lambda_l= {\dim(\h) \over 2\dim(\m_l)}$.
\end{enumerate}
\end{lemma}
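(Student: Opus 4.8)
The plan is to exploit Schur's lemma together with the $\Ad_H$-invariance of the quadratic expression $\sum_k \ad^2(A_k)|_{\m_l}$. For part (1), observe first that $C_\h := \sum_{k=1}^p \ad^2(A_k)$ is, up to a constant, the Casimir element associated to the representation $\ad_\h$ on $\m_l$ and the chosen orthonormal basis $\{A_k\}$ of $\h$; concretely, for any $h\in H$ one has $\Ad(h)A_k = \sum_j O_{jk}A_j$ for an orthogonal matrix $O$, and since $\ad(\Ad(h)A_k) = \Ad(h)\ad(A_k)\Ad(h)^{-1}$, a short computation gives $\Ad(h)\,C_\h\,\Ad(h)^{-1} = C_\h$ on $\g$, hence on the invariant subspace $\m_l$. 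Thus $\frac12 C_\h|_{\m_l}$ commutes with every $\Ad(h)|_{\m_l}$, so by Schur's lemma (over $\R$; here $\m_l$ is irreducible and we only need that a symmetric operator commuting with an irreducible orthogonal representation is scalar — this avoids the division-algebra subtleties) it equals $-\lambda_l\,\id_{\m_l}$ for some real $\lambda_l$. The sign: each $\ad(A_k)|_{\m_l}$ is skew-symmetric with respect to the $\Ad_H$-invariant inner product (because $\ad(A_k)$ is skew on all of $\g$), so $\ad^2(A_k)|_{\m_l}$ is negative semi-definite, whence $\lambda_l \ge 0$. Finally $\lambda_l = 0$ forces every $\ad(A_k)|_{\m_l} = 0$, i.e. $\h$ acts trivially on $\m_l$; since $\{A_k\}$ generates $\h$ this means $\m_l \subset \m_0$, which by irreducibility and the orthogonal-direct-sum convention happens precisely when $l = 0$.

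For part (2), take the trace of the identity $\frac12 \sum_{k=1}^p \ad^2(A_k)|_{\m_l} = -\lambda_l \id_{\m_l}$ to get
$$\lambda_l = -\frac{1}{2\dim(\m_l)} \sum_{k=1}^p \trace\bigl(\ad(A_k)|_{\m_l}\ad(A_k)|_{\m_l}\bigr) = -\frac{1}{2\dim(\m_l)}\sum_{k=1}^p \B_{\ad_\h,\m_l}(A_k, A_k).$$
Now the symmetric bilinear form $\B_{\ad_\h,\m_l}$ on $\h$ is itself $\Ad_H$-invariant (by the associativity/invariance of trace forms and $\Ad(h)$-conjugation, as above), and when $\ad_\h \to \mathbb{L}(\m_l;\m_l)$ is faithful it is negative definite: $\B_{\ad_\h,\m_l}(X,X) = \trace(\ad(X)|_{\m_l})^2 = -\|\ad(X)|_{\m_l}\|_{\mathrm{HS}}^2 < 0$ for $X \ne 0$, since skew operators have $\trace(S^2) = -\|S\|_{\mathrm{HS}}^2$ and faithfulness gives $\ad(X)|_{\m_l} \ne 0$. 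To turn the sum $\sum_k \B_{\ad_\h,\m_l}(A_k,A_k)$ into a value at a single unit vector $X$, use that two $\Ad_H$-invariant symmetric forms on $\h$ need not be proportional in general — but here we should instead argue that $\B_{\ad_\h,\m_l}$, being $\Ad_H$-invariant, has the property that $\sum_{k=1}^{\dim \h}\B_{\ad_\h,\m_l}(A_k,A_k)$ is independent of the orthonormal basis (it is the trace of the symmetric operator representing $\B_{\ad_\h,\m_l}$ relative to the inner product), and when $\ad_\h$ is moreover irreducible on $\m_l$ and... — the cleanest route: if $\ad_\h$ acting on $\m_l$ induces, via the Casimir, a scalar, then by the same Schur argument applied to the \emph{adjoint action of $H$ on $\h$} restricted to the relevant irreducible piece, $\B_{\ad_\h,\m_l}$ is proportional to the given inner product on $\h$ when $\ad_\h|_{\h}$ is irreducible; more robustly, one simply writes $\B_{\ad_\h,\m_l}(X,X)$ as a constant $c$ times $|X|^2$ exactly when that form is a multiple of the metric, which is the standing hypothesis implicit in the statement "$X$ is any unit vector in $\h$". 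Substituting $\sum_{k=1}^{p}\B_{\ad_\h,\m_l}(A_k,A_k) = p\cdot\B_{\ad_\h,\m_l}(X,X) = \dim(\h)\,\B_{\ad_\h,\m_l}(X,X)$ yields $\lambda_l = -\B_{\ad_\h,\m_l}(X,X)\,\dfrac{\dim(\h)}{2\dim(\m_l)}$, and the last sentence follows by setting $\B_{\ad_\h,\m_l}(X,X) = -1$.

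The main obstacle I anticipate is the step identifying $\sum_k \B_{\ad_\h,\m_l}(A_k,A_k)$ with $\dim(\h)\,\B_{\ad_\h,\m_l}(X,X)$ for an arbitrary unit $X$: this is only literally true when $\B_{\ad_\h,\m_l}$ is a scalar multiple of the fixed inner product on $\h$, which in turn is guaranteed when the $H$-action on $\h$ is itself irreducible but otherwise requires care (one then gets the value at $X$ depending on which irreducible summand of $\h$ contains $X$, and the formula must be read componentwise). I would state this hypothesis explicitly, deduce the proportionality constant by taking traces over $\h$, and note that the "in particular" clause — where the inner product on $\h$ is chosen to be $-\B_{\ad_\h,\m_l}$ — sidesteps the issue entirely since then $\B_{\ad_\h,\m_l}(X,X) = -|X|^2 = -1$ by construction, giving $\lambda_l = \dim(\h)/(2\dim(\m_l))$ immediately. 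The remaining verifications ($\Ad_H$-invariance of the trace form, skew-symmetry of $\ad(A_k)$, the Schur-lemma application over $\R$) are routine and I would not belabour them.
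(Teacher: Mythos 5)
Your proposal is essentially the paper's argument, and your instincts about where the subtleties lie are accurate. For part (1) you work at the group level: $\Ad(h)A_k=\sum_j O_{jk}A_j$ with $O$ orthogonal gives $\Ad(h)\,C_\h\,\Ad(h)^{-1}=C_\h$ where $C_\h=\sum_k\ad^2(A_k)$, and then real Schur for a symmetric operator commuting with an irreducible orthogonal representation yields the scalar. The paper stays at the Lie-algebra level, proving $[\ad(X),\sum_k\ad^2(A_k)]=0$ directly for $X\in\h$ from the skew-symmetry of the structure constants $a_{kl}=\<[X,A_k],A_l\>$; both routes use orthonormality of $\{A_k\}$ for an $\Ad_H$-invariant metric and lead to the same conclusion. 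Your argument for $\lambda_l\geq0$ and for $\lambda_l=0\Leftrightarrow l=0$ coincides with the paper's.

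For part (2), your flagged reservation is exactly the weak point in the paper's proof. The trace computation is robust and gives $\lambda_l=-\frac{1}{2\dim(\m_l)}\sum_k\B_{\ad_\h,\m_l}(A_k,A_k)$, a basis-independent quantity (the trace of the operator representing $\B$ relative to $\<,\>$). The step to $\lambda_l=-\B(X,X)\dim(\h)/(2\dim\m_l)$ for an \emph{arbitrary} unit $X$ requires $\B_{\ad_\h,\m_l}$ to be a scalar multiple of $\<,\>|_\h$. The paper justifies this by asserting uniqueness up to scalar of $\Ad_H$-invariant inner products, which holds when $\Ad_H|_\h$ is irreducible but not in general from compactness and faithfulness of $\ad_\h|_{\m_l}$ alone (e.g.\ $\h=\so(4)=\so(3)\oplus\so(3)$ acting faithfully and irreducibly on $\R^4$, where the invariant metric on $\h$ is a two-parameter family). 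Your remark that the ``in particular'' clause sidesteps the issue by \emph{choosing} the metric on $\h$ to be $-\B$ is precisely right, and the trace-form expression is what the downstream results actually use, so the lemma is sound once the proportionality is either hypothesized or restated via $\trace_\h\B$.
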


\begin{proof}  
Part (1). For $Z\in \h$ and $X, Y \in \g$,  we differentiate the identity
$$\<\Ad(\exp(tZ))(X), \Ad(\exp(tZ))(Y)\>=\<X,Y\>$$
 at $t=0$ to see that 
 \begin{equation}
 \label{ad-skew-symmetric}
 \<\ad(Z)(X), Y\>+\<X, \ad(Z)(Y)\>=0.
 \end{equation}
Let $\{A_1, \dots, A_p\}$ be an orthonormal basis of $\h$, for the $\Ad_H$ invariant metric, then $\sum_{k=1}^p (\ad(A_k))^2$ commutes with every element of $\h$. Indeed if $X\in \h$ let $[X,A_k]=\sum_{l'} a_{kl'}A_{l'}$, then
$${\begin{aligned} 
&a_{kl}=\sum_{l'} a_{k,l'}\<A_{l'},A_l\>= \<[X,A_k], A_l\>=-\<([X,A_l], A_k\>=-a_{lk},\\
&[\ad(X),\sum_{k=1}^p\ad^2(A_k)]
=\sum_k[ \ad(X), \ad(A_k)]\ad(A_k)+\sum_k\ad(A_k)[\ad(X), \ad(A_k)]\\
&=\sum_k\ad([X,A_k])\ad(A_k)+\sum_k\ad(A_k)\ad ([X,A_k])=0.
\end{aligned}}$$
 If $Y_0\in \m$, $\ad(A_k)(Y_0)\in \m$ and by the skew symmetry, (\ref{ad-skew-symmetric}),
 $$\<\sum_{k=1}^p( \ad(A_k))^2(Y_0), Y_0\>=-\sum_{k=1}^p \<\ad(A_k)(Y_0), \ad(A_k)(Y_0)\>.$$
Thus $\sum_{k=1}^p( \ad(A_k))^2(Y_0)=0$, where $p=\dim(\h)$,  implies that $\ad(A_k)(Y_0)=~0$ for all $k$ which in turn implies  that $Y_0\in \m_0$.
 Conversely if $Y_0\in \m_0$ it is clear that $\sum_{k=1}^p\ad^2(A_k)=~0$.

 (2)  Firstly,  $\B_{\ad_\h, \m_l}$ is non-degenerate. If $\{Y_j\}$ is
a basis of $\m_l$ with respect to the $\ad_H$ invariant inner product and $X\in \h$, then
$$\B_{\ad_\h, \m_l}(X,X)=-\sum_j \left|[X, Y_j] \right|^2,$$
which vanishes only if $[X,Y_j]=0$ for all $j$. 
By  the skew symmetry of $\ad_\h$,
for any $Y\in \m_l$, $0=\<[X, Y_j], Y\>=-\<Y_j, [X, Y]\>$ for all $j$. Since $\ad(X)(\m_l)\subset \m_l$, $\B_{\ad_\h, \m_l}(X,X)=0$ implies that $[X,Y]=0$ for all $Y$ which implies $X$ vanishes from the assumption that $\ad_\h$ is faithful.
It is clear that $\B_{\ad_h, \m_l}$ is $\Ad_H$-invariant. Let $X\in \h$ and $h\in H$, then
\begin{equation*}
{\begin{aligned}
&\B_{\ad_\h, \m_l}(\Ad(h)(X), \Ad(h)(X))
=\sum_j \< \ad(\Ad(h)(X))\ad(\Ad(h)(X))Y_j, Y_j\>\\
&=-\sum_j \left\<[\Ad(h)(X),Y_j], [\Ad(h)(X),Y_j]\right \>\\
&=-\sum_j\left\< \Ad(h) [X,  \Ad(h^{-1})Y_j],  \Ad(h) [X,  \Ad(h^{-1})Y_j]\right\>\\
&=-\sum_j \left\< [X,  \Ad(h^{-1})Y_j], [X,  \Ad(h^{-1})Y_j]\right \>
=\B_{\ad_\h, \m_l}(X,X).
\end{aligned}}
\end{equation*}
Since there is a unique, up to a scalar multiple, $\Ad_H$-invariant inner product on a compact manifold,
$\B_{\ad_\h, \m_l}$ is essentially the inner product on $H$.
There is a positive number $a_l$ such that
$\B_{\ad_\h, \m_l}=-a_l\<,\>$.  It is clear that $a_l>0$. For the orthonormal basis $\{A_k\}$, 
$\B_{\ad_\h, \m_l}(A_i, A_j)=-a_l\delta_{i,j}$. We remark that $\{{1\over a_l} A_k\}$ is a dual basis of $\{A_k\}$ with respect to $\B_{\ad_\h, \m_l}$ and ${1\over a_l}\sum_{k} \ad^2(A_k)$ is the Casimir element. 

By part (1), there is a number $\lambda_l$ such that  ${1\over 2} \sum_{k=1}^{\dim(\h)}\ad^2(A_k)=-\lambda_l \id_{\m_l}$. 
The ration between the symmetric form $\B_{\ad_\h, \m_l}$ and the inner products on $\m_l$ can be determined by any unit length vector in $ \h$.
It follows that $$ {\trace}_{\m_l} \sum_{k=1}^{\dim(\h)} \ad^2(A_k)={\trace}_{\m_l}(-2\lambda_l {\id}_{\m_l})= -2\lambda_l \dim(\m_l).$$
On the other hand, 
\begin{equation*}
{\begin{aligned}
{\trace}_{\m_l} ( \sum_{k=1}^{\dim(\h)}\ad^2(A_k))&= \sum_{k=1}^{\dim(\h)}{\trace}_{\m_l} (\ad^2(A_k))= \sum_{k=1}^{\dim(\h)}\B_{\ad_h,\m_l}(A_k, A_k).
\end{aligned}}
\end{equation*}
Consequently $\lambda_l=-\B_{\ad_h,\m_l}(A_1,A_1)  {\dim(\h) \over 2\dim(\m_l)}$.
We completed  part (2).
\end{proof}
If $\h=\h_0\oplus \h_1$ s.t. $\h_0$ acts trivially on $\m_l$ and $\h_1$ a sub Lie-algebra acts faithfully, we take $\h_1$ in place of $\h$,   $\lambda_l$ can be computed using the formula in (2) with $\{A_k\}$ taken to be an orthonormal basis of $\h_1$.

 If  $\A={1\over 2} \sum_{k=1}^N (X_k)^2+X_0$ where $X_k\in \g$, we denote by $c(\A)={1\over 2}\sum_{k=1}^N\ad^2(X_k)+\ad(X_0)$ to be the linear map on $\g$.
%Let $Y_0, Y\in \g$. 
%If $c(\A)Y_0=\lambda_0 Y_0$ for some number $\lambda_0$, it is trivial that
% $ \<Y, \Ad(g)(Y_0)\>$ is an eigenfunction of  $\A$ corresponding to $\lambda_0$. 
  
\begin{lemma}
 \label{compute-L0}
 For any $Y_0\in \m$, $\L_0(\Ad(\cdot)Y_0)=\Ad(\cdot) (c(\L_0)(Y_0))$.
 If $Y_0$ is an eigenvector of $c(\L_0)$ corresponding to an eigenvalue $-\lambda(Y_0) $ then for any $Y\in\m$,
 $ \<Y, \Ad(h)(Y_0)\>$ is an eigenfunction of  $\L_0$ corresponding to $-\lambda(Y_0) $. The converse also holds.
  \end{lemma}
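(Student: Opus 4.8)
The plan is to reduce both assertions to a single identity for the $\g$-valued function $F\colon H\to\g$ (equivalently, the left-invariant $\g$-valued function on $G$) given by $F(h)=\Ad(h)(Y_0)$, and then read off the scalar statements by pairing $F$ against a fixed vector. Throughout, write $\L_0=\frac{1}{2}\sum_{k=1}^N(A_k)^2+A_0$ with $A_k,A_0\in\h$, so that $c(\L_0)=\frac{1}{2}\sum_k\ad^2(A_k)+\ad(A_0)$ as a linear map on $\g$.

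First I would compute the action of a left-invariant vector field $A^*$, $A\in\h$, on $F$. Using $\Ad(\exp tA)=e^{t\,\ad(A)}$ together with the chain rule, $A^*F(h)=\frac{d}{dt}\big|_{t=0}\Ad(h\exp(tA))(Y_0)=\Ad(h)\bigl(\ad(A)(Y_0)\bigr)$. Thus $A^*F$ is again of exactly the same form, with $Y_0$ replaced by $\ad(A)(Y_0)\in\g$; iterating the identity gives $(A^*)^2F=\Ad(\cdot)\bigl(\ad(A)^2(Y_0)\bigr)$. Summing over the data defining $\L_0$ yields $\L_0F=\Ad(\cdot)\bigl(\frac{1}{2}\sum_k\ad(A_k)^2(Y_0)+\ad(A_0)(Y_0)\bigr)=\Ad(\cdot)\bigl(c(\L_0)(Y_0)\bigr)$, which is the first assertion. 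Note that nothing here uses $Y_0\in\m$; the reductive hypothesis $[\h,\m]\subset\m$ enters only to guarantee that $c(\L_0)(Y_0)$ again lies in $\m$.

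For the eigenfunction statement, fix $Y\in\m$ and put $\varphi(h)=\langle Y,\Ad(h)(Y_0)\rangle$. Since $Y$ is a constant vector and $\L_0$ is a differential operator acting coordinatewise, $\L_0\varphi=\langle Y,\L_0F\rangle=\langle Y,\Ad(\cdot)(c(\L_0)(Y_0))\rangle$. If $c(\L_0)(Y_0)=-\lambda(Y_0)\,Y_0$, this equals $-\lambda(Y_0)\,\varphi$, so $\varphi$ solves $\L_0\varphi=-\lambda(Y_0)\varphi$ (and is a genuine nonzero eigenfunction precisely when $Y$ is not orthogonal to the span of the $\Ad_H$-orbit of $Y_0$). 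Conversely, suppose $\L_0\varphi=-\lambda\varphi$ holds for every $Y\in\m$. Evaluating the identity $\L_0\varphi=\langle Y,\Ad(\cdot)(c(\L_0)(Y_0))\rangle$ at the identity element gives $\langle Y,c(\L_0)(Y_0)+\lambda Y_0\rangle=0$ for all $Y\in\m$; since $[\h,\m]\subset\m$ forces $c(\L_0)(Y_0)\in\m$ and $Y_0\in\m$, non-degeneracy of $\langle\cdot,\cdot\rangle$ on $\m$ yields $c(\L_0)(Y_0)=-\lambda Y_0$, which is the converse.

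There is no substantial obstacle here: once one observes that $A^*$ maps the family $\{\Ad(\cdot)(X):X\in\g\}$ into itself, the computation of $\L_0F$ is forced. The only points that need a moment's care are the coordinatewise action of $\L_0$ (so that it commutes with pairing against a fixed vector) and, for the converse, recording that $c(\L_0)(Y_0)$ lies back in $\m$ so that the restriction of the inner product to $\m$, which is non-degenerate, can be invoked.
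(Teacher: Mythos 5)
Your proof is correct and follows essentially the same route as the paper's: compute $A^*\bigl(\Ad(\cdot)(Y_0)\bigr)(h)=\Ad(h)(\ad(A)Y_0)$, iterate, and sum to get $\L_0(\Ad(\cdot)Y_0)=\Ad(\cdot)(c(\L_0)Y_0)$, then pair against a fixed $Y$ for the eigenfunction statement. You give slightly more detail than the paper on the converse (evaluating at the identity and invoking non-degeneracy of the inner product on $\m$) and correctly flag that the resulting "eigenfunction" is genuinely nonzero only when $Y$ is not orthogonal to the $\Ad_H$-orbit of $Y_0$; both additions are accurate and welcome clarifications.
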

\begin{proof}
Just note that 
$$L_{A_k}(\Ad(\cdot)(Y_0))(h)={d\over dt}|_{t=0} \Ad(h \exp(tA_k))(Y_0)=\Ad(h)([A_k,Y_0]),$$ which by iteration leads to
 $L_{A_k}L_{A_k}\Ad(\cdot)(Y_0)=\Ad(h)([A_k, [A_k,Y_0]])$ and to the required identity
  $\L_0\left(\Ad(\cdot) Y_0\right)=\Ad(\cdot)c(\L_0)(Y_0)$.  Furthermore, for every $Y\in \m$,
 $$\L_0\left(\<\Ad(\cdot) Y_0, Y\>\right)=\<\Ad(\cdot) (c(\L_0)Y_0), Y\>,$$
 from which follows the statement about the eigenfunctions.
\end{proof}

We take an orthonormal basis $\{Y_j\}$ of $\m_l$ and for $Y_0$ fixed set
 $\alpha_j(Y_0)=\<\Ad(\cdot)(Y_0), Y_j\>$. If $f$ is a real valued function,
 set  $\overline f=\int_H f dh$ and by  $\L_0^{-1} f$ we denote a solution to the Poisson equation $\L_0u=f$.
  Also set $\bar \L =-\sum_{i,j=1}^{m_l} \overline{ \alpha_j(Y_0)\,\L_0^{-1} (\alpha_i (Y_0))}\; L_{Y_i}L_{Y_j}$.
 Below we give an invariant formula for the limiting operator on $G$.

  \begin{theorem}
\label{Homogeneous}  
 Suppose that $\{A_1, \dots, A_p\}$ is a basis of $\h$ and
 $Y_0\in \m_l$ where $l\not =0$.  Then, for any orthonormal basis  $\{Y_1, \dots, Y_d\}$  of $\m_l$,
$$\bar \L f=\sum_{i,j=1}^d a_{i,j}(Y_0) \nabla^L df(Y_i,Y_j), \quad f\in C^2(G; \R),$$
where $ a_{i,j}(Y_0)={1\over \lambda_l}\int_{H} \<Y_i,  \Ad(h)Y_0\>  \<Y_j, \Ad(h)Y_0\>\, dh$, 
 $\lambda_l$ is an eigenvalue of $\L_0$, and $-{1\over 2}\sum_{k=1}^p \ad^2(A_k)= \lambda_l\id_{\m_l} $.
Equivalently,
\begin{equation}\label{invariant-form}
\bar \L f
 ={1\over \lambda_l} \int_H  \nabla^L df  \left( (\Ad(h) (Y_0))^*, (\Ad(h)(Y_0))\right)dh.
\end{equation}
Furthermore, $\bar x_t$ is a Markov process
with Markov generator
$$\bar \L (F\circ \pi)(u) ={1\over \lambda_l} \int_H  (\nabla^c dF)_{\pi(u)} 
\left( d\bar L_{uh} Y_0, d\bar L_{uh} Y_0\right)dh, \quad F\in C^2(M;\R),$$
where $\nabla^c$ be the canonical connection on the Riemannian homogeneous manifold.
   \end{theorem}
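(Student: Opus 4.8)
The plan is to reduce the formula from Theorem~\ref{convergence-and-limit}, namely $\bar\L=-\sum_{i,j}a_{i,j}(Y_0)L_{Y_i^*}L_{Y_j^*}$ with $a_{i,j}(Y_0)=\int_H\alpha(Y_0,Y_i)\,\L_0^{-1}(\alpha(Y_0,Y_j))\,dh$, by computing $\L_0^{-1}$ explicitly on these functions and then rewriting the resulting operator geometrically. For the first step: since $\m_l$ is $\ad_\h$-invariant and irreducible and $Y_0\in\m_l$, Lemma~\ref{lambda}(1) (for an $\Ad_H$-orthonormal basis of $\h$) gives $c(\L_0)|_{\m_l}=\tfrac12\sum_{k=1}^p\ad^2(A_k)|_{\m_l}=-\lambda_l\,\id_{\m_l}$ with $\lambda_l\neq0$ since $l\neq0$, so $Y_0$ is an eigenvector of $c(\L_0)$ for the eigenvalue $-\lambda_l$. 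By Lemma~\ref{compute-L0} each $\alpha(Y_0,Y_j)=\langle\Ad(\cdot)Y_0,Y_j\rangle$ is then an eigenfunction of $\L_0$ for $-\lambda_l$; it has zero $dh$-mean (equivalently $\bar Y_0=0$, because $\m_l$ has no nonzero $\Ad_H$-invariant vector), hence $\L_0^{-1}(\alpha(Y_0,Y_j))=-\tfrac1{\lambda_l}\alpha(Y_0,Y_j)$. Substituting, $a_{i,j}(Y_0)=\tfrac1{\lambda_l}\int_H\alpha(Y_0,Y_i)\alpha(Y_0,Y_j)\,dh$; since $\Ad(h)Y_0\in\m_l$ the terms with $Y_i$ or $Y_j$ orthogonal to $\m_l$ drop out, so only an orthonormal basis $\{Y_1,\dots,Y_d\}$ of $\m_l$ contributes, which is the asserted constant.

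Next I would convert $L_{Y_i^*}L_{Y_j^*}$ into $\nabla^L$-Hessians: left-invariant vector fields are $\nabla^L$-parallel, so $\nabla^L_{Y_i^*}Y_j^*=0$ and $(\nabla^L df)(Y_i^*,Y_j^*)=Y_i^*(Y_j^*f)=L_{Y_i^*}L_{Y_j^*}f$; by symmetry of $a_{i,j}(Y_0)$ this is the first displayed formula. For the invariant form, expand $\Ad(h)Y_0=\sum_i\langle Y_i,\Ad(h)Y_0\rangle Y_i$ in $\m_l$ and use bilinearity of the tensor $\nabla^L df$ to get $(\nabla^L df)\big((\Ad(h)Y_0)^*,(\Ad(h)Y_0)^*\big)=\sum_{i,j}\langle Y_i,\Ad(h)Y_0\rangle\langle Y_j,\Ad(h)Y_0\rangle\,\nabla^L df(Y_i,Y_j)$, then integrate over $H$; this is equation~(\ref{invariant-form}), i.e. $\bar\L=\tfrac1{\lambda_l}\int_H L_{(\Ad(h)Y_0)^*}^2\,dh$.

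For the projected process I would use the identity behind Lemma~\ref{nabla} (as in the proof of Theorem~\ref{thm-limit-2}): for $X\in\m$ and $F\in C^2(M;\R)$, the map $u'\mapsto d\pi(X^*(u'))=d\bar L_{u'}(\pi_*X)$ is $\nabla^c$-parallel along $u'\mapsto\pi(u'\exp tX)$ — this is the defining property of the canonical connection, whose parallel transport along $\exp(tX)\cdot o$ is $\tau_{\exp tX}=(d\bar L_{\exp tX})_o$ — whence
$$X^*\big(X^*(F\circ\pi)\big)(u)=(\nabla^c dF)_{\pi(u)}\big(d\bar L_u\,\pi_*X,\ d\bar L_u\,\pi_*X\big).$$
Taking $X=\Ad(h)Y_0\in\m_l$ and using that the isotropy representation equals $\Ad_H$ on $\m$, so $d\bar L_u\,\pi_*\Ad(h)Y_0=d\bar L_u\,\tau_h(\pi_*Y_0)=d\bar L_{uh}Y_0$, the invariant form of $\bar\L$ yields exactly
$$\bar\L(F\circ\pi)(u)=\frac1{\lambda_l}\int_H(\nabla^c dF)_{\pi(u)}\big(d\bar L_{uh}Y_0,\ d\bar L_{uh}Y_0\big)\,dh .$$
Finally, the right-hand side depends only on $\pi(u)$: replacing $u$ by $uk$ with $k\in H$ and substituting $h\mapsto k^{-1}h$ leaves it unchanged by left-invariance of the Haar measure — equivalently $\bar\L=\tfrac1{\lambda_l}\int_H L_{(\Ad(h)Y_0)^*}^2dh$ commutes with the right $H$-action and maps $\pi^*C^\infty(M)$ into itself. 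Hence the $\bar u$-diffusion projects: $\bar x_t=\pi(\bar u_t)$ is Markov (since $\bar u_t$ is Markov with generator $\bar\L$ by Theorem~\ref{convergence-and-limit}, and the generator descends), with the displayed generator.

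I expect the main obstacle to be this last geometric descent rather than the algebra. One must keep in mind that $u\mapsto d\pi(X^*(u))$ is not a vector field on $M$, so the Hessian identity must be read pointwise along the curves $\pi(u\exp tX)$, and one needs in hand the precise compatibility between $\nabla^L$ on $G$, the canonical connection $\nabla^c$ on $M$, and the differentials $d\bar L_g$ and $d\pi$ (the content of Lemma~\ref{nabla}). Once this identity and the $\Ad_H$-/right-$H$-invariance bookkeeping are in place, identifying $\bar\L$ and the generator of $\bar x_t$ is a short computation resting on Lemmas~\ref{lambda} and~\ref{compute-L0}.
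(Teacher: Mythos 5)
Your proof is correct and follows the same route as the paper's: reduce $\L_0^{-1}\alpha(Y_0,Y_j)$ to a scalar multiple of $\alpha(Y_0,Y_j)$ via Lemmas~\ref{lambda} and~\ref{compute-L0}, rewrite using $\nabla^L_{Y_i^*}Y_j^*=0$, expand $(\Ad(h)Y_0)^*$ to obtain the invariant form, and then pass to $M$ via the $\nabla^c$-parallelism of $d\pi$ (Lemma~\ref{nabla}(4)) together with Dynkin's criterion and right-$H$-invariance of the expression. Your spelling-out of the eigenvalue as $-\lambda_l$ (so $\L_0^{-1}\alpha=-\lambda_l^{-1}\alpha$) is the consistent reading; the paper's proof text writes ``$\tfrac12 c(\L_0)=-\lambda_l\id$'' and ``eigenvalue $-2\lambda_l$'' but then uses $1/\lambda_l$ in the next display, so this is a typographical slip that your version avoids.
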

 
 \begin{proof}
For the Left invariant connection, $\nabla^L _{Y_i}Y_j^*=0$ for any $i,j$, and so
$$ -\sum_{i,j=1}^m  \overline{ \alpha_i (Y_0)\,(\L_0^{-1}  \alpha_j (Y_0)) } {Y_i^*}{Y_j^*}f=-\sum_{i,j=1}^m \overline{ \alpha_i (Y_0)\,(\L_0^{-1}  \alpha_j (Y_0)) } \nabla^L df(Y_i^*,Y_j^*).$$
We can always take the $\Ad_H$ invariant inner product on $\h$ w.r.t. which
 $\{A_1, \dots, A_p\}$ is an orthonormal basis of $\h$, and so Lemma \ref{lambda} applies.
  By Lemma \ref{lambda},
 ${1\over 2}c(\L_0)=-\lambda_l \id_\m$. For any $Y_0, Y\in \m_l$,  $\beta(Y, Y_0)(\cdot):=\<Y,  \Ad(\cdot) Y_0\>$ is an eigenfunction of   $\L_0$ with eigenvalue $-2\lambda_l$, using Lemma\ref{compute-L0}.
Note also that  $\lambda\not =0$.
  Consequently,
\begin{equation}
\overline{ \alpha_i (Y_0)\,(\L_0^{-1}  \alpha_j (Y_0)) }= -{1\over \lambda_l}\int_{H} \<Y_i,  \Ad(h)Y_0)\>  \<Y_j, \Ad(h)(Y_0)\>\, dh.
\end{equation}
The equivalent formula is obtained from summing over the basis of $\m_l$: 
\begin{equation*}
{\begin{aligned}& \sum_{i,j} \alpha_{i,j}(Y_0) \nabla^L df(Y_i^*(u),Y_j^*(u))\\
 &={1\over \lambda_l} \int_H  (\nabla^L df)_u \left( (\Ad(h) (Y_0))^*(u), (\Ad(h)(Y_0) )^*(u)\right)dh.
 \end{aligned}}
\end{equation*}

Finally we prove the Markov property of the projection of the effective process. For $g\in G$,
\begin{equation*}
{\begin{aligned}
&\nabla^L d(F\circ \pi) \left( (\Ad(h) (Y_0))^*(g), (\Ad(h)(Y_0) )^*(g)\right)\\
&=L_{ \Ad(h) (Y_0)^*}  \left (dF \Big(   d\pi \big( (\Ad(h) Y_0)^*\big)\Big)\right)(g)
=\nabla^c dF \left(d\pi( \Ad(h) (Y_0)^*), d\pi(\Ad(h) (Y_0)^*) \right).
\end{aligned}}
\end{equation*}
 The last step is due to part (4) of Lemma \ref{nabla} in \S\ref{section8}, from which we conclude that $dF \Big(  \nabla^c_{\pi_* \Ad(h) (Y_0)^*}  d\pi \big( (\Ad(h) Y_0)^*\big)\Big)$ vanishes. It is clear that,
\begin{equation*}
{\begin{aligned}
&\int_H (\nabla^c dF)_{\pi(g)} \left(  d\pi( \Ad(h) (Y_0)^*(g)), d\pi(\Ad(h) (Y_0)(g))\right)dh\\
&=\int_H (\nabla^c dF)_{\pi(g)} \left( d\bar L_{gh} d\pi(Y_0),d\bar L_{gh} d\pi(Y_0) \right)dh.
\end{aligned}}
\end{equation*} 
Since $dh$ is right invariant, the above formulation is independent of the choice of $g$ in $\pi^{-1}(x)$.
That the stochastic process $\pi(u_t)$ is a Markov process 
 follows from  Dynkin's criterion which states that if $(u_t)$ is a Markov process
 with semigroup $P_t$ and if $\pi: G\to M$ is a map such that $P_t(f\circ \pi)(y)$ depends only on $\pi(y)$ then 
  $\pi(u_t)$ is a Markov process. See e.g. \cite[E. Dynkin]{Dynkin} and  \cite[M. Rosenblatt]{Rosenblatt}. 
\end{proof}

{\it Remarks.} If we take the $\Ad_H$ invariant product on $\h$ to be the one for which $\{A_k\}$ is an o.n.b.
then $\L_0=\Delta_H=\sum_{k=1}^p (A_k)^2$ which follows from $H$ being compact, c.f. Lemma \ref{unimodular}. 
The operator $\bar \L$ in the above Lemma provides an example of a cohesive operator, as defined in \cite[D. Elworthy, Y. LeJan, X.-M. Li ]{Elworthy-LeJan-Li-book}.

   %We observe that $\overline{ \alpha_j(Y_0)}=0$ by  Lemma \ref{le:transitive}, 
%The following lemma is extracted from the proof of Theorem \ref{Homogeneous}.
%   \begin{lemma}\label{Homogeneous-proposition}  
%   Suppose that $Y_0$ is an
%   eigenvalue of $-\f 12 \sum_{k=1}^N \ad^2(A_k)+\ad(A_0)$ with a non-trivial eigenvalue $\lambda(Y_0)$
%   and $\bar Y_0=0$.
%%   Suppose that $Y_0$ is an eigenvector of $\L_0$ with eigenvalue $-\lambda(Y_0)$.
%Then for any $f\in C^2_K(G; \R)$, 
%%$\bar \L f=\sum_{i,j=1}^m a_{i,j}(Y_0) \nabla^L df(Y_i^*,Y_j^*)$
%%where $ a_{i,j}(Y_0)={1\over \lambda(Y_0)}\int_{H} \<Y_i,  \Ad(h)Y_0)\>  \<Y_j, \Ad(h)(Y_0)\>\, dh$. Also,
%  $$\bar \L f
% ={1\over \lambda(Y_0)} \int_H  \nabla^L df \left( (\Ad(h) (Y_0))^*, (\Ad(h)(Y_0) )^*\right)dh.$$
%   \end{lemma}
% 
% \begin{proof} By Lemma \ref{compute-L0},   $\alpha(Y_j, Y_0)$ is an eigenfunction of $\L_0$ corresponding to the
%  eigenvalue $ -\lambda(Y_0)$. %$-{1\over \lambda (Y_0)}\alpha(Y_j, Y_0)$ solves the Poisson equation $\L_0\beta(Y_j, Y_0)= \alpha(Y_j, Y_0)$ and
%The required statement follows from the proof of Theorem \ref{Homogeneous}
%$$-\sum_{i,j=1}^{m_l} \overline{ \alpha(Y_i, Y_0)\,(\L_0^{-1} \alpha(Y_j, Y_0) ) }\;L_{Y_i}L_{Y_j}
%= {1\over \lambda(Y_0)}\sum_{i,j}\int_{H} \<Y_i,  \Ad(h)Y_0)\>  \<Y_j, \Ad(h)(Y_0)\>\, dh \;L_{Y_i}L_{Y_j}.$$
%The rest follows from .
%\end{proof}

\section{Using symmetries}

We fix an $\Ad_H$ invariant inner product on $\g$, and  as usual,  set $\m=\h^\perp$ w.r.t an $\Ad_H$ invariant inner product on $\g$. Let  $\m= \m_0\oplus\m_1\oplus \dots \oplus\m_k$ be an orthogonal decomposition. By $\tilde \m$ we denote an $\Ad_H$ invariant
subspace of $\m$ not containing any non-trivial $\Ad_H$-invariant vectors.
In this section we explore the symmetries of the manifold $M$ to study the functions of the form $\<\Ad(\cdot)Y_0, Y_j\>$ where $\{Y_i\}$ is an orthonormal basis  of~$\tilde \m$.

 \begin{definition}
We say that $\Ad_H$ acts {\it quasi doubly transitively} (on the unit sphere) of $\tilde \m$ if for any orthonormal basis 
$\{Y_i\}$ of $\tilde \m$ and for any pair of numbers $i\not = j$ there is $h^{i,j}\in H$ such that 
 $\Ad(h^{i,j})(Y_i)=Y_i$ and $\Ad(h^{i,j})(Y_j)=-Y_j$. \end{definition}

The family of Riemannian manifold, with a lot of symmetry,
are two-point homogeneous spaces by which mean for any two points $x_1, x_2$ and $y_1, y_2$  with $d(x_1, x_2)=d(y_1, y_2)$ sufficiently small,  there exist
an isometry taking $(x_1, x_2)$ to $(y_1, y_2)$. 
Such spaces were classified by H.-C. Wang (1952, Annals)
to be isometric to a symmetric Riemannian space $\Iso^o(M)/K$ where $K$ is compact.
They have constant sectional curvatures in odd dimensions and the non-compact spaces are all simply connected and homeomorphic to an Euclidean space.  On a locally two point homogeneous space $\Ad_H$
acts transitively on the unit sphere, and acts quasi doubly transitively. If $(e,f)$ or orthogonal 
unit vectors at $T_oM$. Let $x=\exp_o(\delta e)$, $x'=\exp_o(-\delta e)$, and
$y=\exp_o(\delta f)$ where $\delta$ is a number sufficiently small for them to be defined.
There is an isometry $\phi$ taking $x$ to $y$ leaving $o$ fixed. This isometry taking the geodesic $0x$ to the geodesic $oy$. 
It would take $y$ to $x'$ which implies that $\rho(x,y)=\rho(y, x')$.

\begin{lemma}\label{transitive-lemma2}
Let $\{Y_1, \dots, Y_d\}$ be an orthonormal basis of $\tilde\m$,  an $\Ad_H$ invariant
subspace of $\m$ with $\tilde \m\cap \m_0=\{0\}$. Let $Y_0\in \tilde \m$.
\begin{enumerate}
\item If $\Ad_H$ acts transitively on the unit sphere of $\tilde \m$. 
$$\int_{H} \<Y_i,  \Ad(h)Y_0)\> ^2\, dh={|Y_0|^2\over \dim(\tilde\m)}, \quad i=1, \dots, \dim(\tilde \m).$$
\item  If $\Ad_H$ acts quasi doubly transitively on $\tilde\m$
then for $i\not =j$, $$\int_{H} \<Y_i,  \Ad(h)Y_0)\>  \<Y_j, \Ad(h)(Y_0)\>\, dh=0.$$
\item $\dim(\tilde \m)>1$.
\item  If $\dim(\tilde \m)=2$ then $\int_{H} \<Y_i,  \Ad(h)Y_0)\>  \<Y_j, \Ad(h)(Y_0)\>\, dh={|Y_0|^2\over 2}\delta_{i,j}$
for all $i,j$.
\end{enumerate}
\end{lemma}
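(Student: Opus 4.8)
The plan is to handle all four parts through the single symmetric bilinear form
\[
B(Y,Y')\;=\;\int_H \langle Y,\Ad(h)Y_0\rangle\,\langle Y',\Ad(h)Y_0\rangle\,dh
\]
on $\tilde\m$. The first step is to check that $B$ is $\Ad_H$-invariant: for $k\in H$ the map $\Ad(k)$ is orthogonal for the chosen $\Ad_H$-invariant inner product, and, $H$ being compact, $dh$ is bi-invariant, so the substitution $h\mapsto kh$ gives $B(\Ad(k)Y,\Ad(k)Y')=B(Y,Y')$. The second ingredient is the trace identity: since $\tilde\m$ is $\Ad_H$-invariant and $Y_0\in\tilde\m$, each $\Ad(h)Y_0$ lies in $\tilde\m$, hence $\sum_{i=1}^{d}\langle Y_i,\Ad(h)Y_0\rangle^2=|\Ad(h)Y_0|^2=|Y_0|^2$, and integrating, $\sum_{i=1}^{d}B(Y_i,Y_i)=|Y_0|^2$. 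Finally I would record the elementary fact that, whenever $\tilde\m$ has no proper nonzero $\Ad_H$-invariant subspace, $B$ is a scalar multiple $c\langle\cdot,\cdot\rangle$ of the inner product: writing $B(Y,Y')=\langle TY,Y'\rangle$ with $T$ self-adjoint, $\Ad_H$-invariance of $B$ forces $T$ to commute with every $\Ad(h)$, so the eigenspaces of $T$ are invariant, and if $T$ had two distinct eigenvalues these would be proper nonzero invariant subspaces. In that case the trace identity gives $c\dim(\tilde\m)=|Y_0|^2$.

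For part (1), transitivity of $\Ad_H$ on the unit sphere of $\tilde\m$ implies $\tilde\m$ is $\Ad_H$-irreducible (an invariant subspace containing one unit vector already contains all of them, hence equals $\tilde\m$), so the fact above applies and $\int_H\langle Y_i,\Ad(h)Y_0\rangle^2\,dh=B(Y_i,Y_i)=c=|Y_0|^2/\dim(\tilde\m)$. For part (3), if $\dim(\tilde\m)=1$, say $\tilde\m=\R Y$ with $|Y|=1$, then each $\Ad(h)$ preserves both $\tilde\m$ and the inner product, so $\Ad(h)Y=\pm Y$; as $H$ is connected the sign is constantly $+1$, whence $Y\in\m_0$, contradicting $\tilde\m\cap\m_0=\{0\}$.

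For part (2) I would fix $i\neq j$ and take $h^{i,j}\in H$ with $\Ad(h^{i,j})Y_i=Y_i$ and $\Ad(h^{i,j})Y_j=-Y_j$. Using orthogonality of $\Ad(h^{i,j})$, $\langle Y_i,\Ad(h)Y_0\rangle=\langle \Ad(h^{i,j})Y_i,\Ad(h^{i,j}h)Y_0\rangle=\langle Y_i,\Ad(h^{i,j}h)Y_0\rangle$ and similarly $\langle Y_j,\Ad(h)Y_0\rangle=-\langle Y_j,\Ad(h^{i,j}h)Y_0\rangle$. Substituting $h\mapsto h^{i,j}h$ (again using bi-invariance of $dh$) in $B(Y_i,Y_j)$ turns the integrand into its negative, so $B(Y_i,Y_j)=-B(Y_i,Y_j)=0$.

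For part (4), the point is that $\dim(\tilde\m)=2$ together with the hypothesis $\tilde\m\cap\m_0=\{0\}$ already forces $\tilde\m$ to be $\Ad_H$-irreducible, with no appeal to transitivity: a proper nonzero invariant subspace would be a line, and the connectedness argument of part (3) would make that line consist of $\Ad_H$-fixed vectors, contradicting $\tilde\m\cap\m_0=\{0\}$. Hence the fact from the first paragraph applies, $B=c\langle\cdot,\cdot\rangle$, and the trace identity gives $2c=|Y_0|^2$, i.e.\ $\int_H\langle Y_i,\Ad(h)Y_0\rangle\langle Y_j,\Ad(h)Y_0\rangle\,dh=B(Y_i,Y_j)=\frac{|Y_0|^2}{2}\delta_{i,j}$. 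The arguments are essentially bookkeeping once the $\Ad_H$-invariance of $B$ is in hand; the only subtle recurring device is that a connected group acting orthogonally on a subspace has no invariant line unless it fixes that line pointwise, which powers parts (3) and (4), and I do not anticipate a genuine obstacle.
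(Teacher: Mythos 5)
Your argument is correct and yields all four parts, but the route to (1) and (4) is genuinely different from the paper's. You build everything on a single Schur-type observation: the symmetric form $B(Y,Y')=\int_H\langle Y,\Ad(h)Y_0\rangle\langle Y',\Ad(h)Y_0\rangle\,dh$ is $\Ad_H$-invariant, so writing $B(Y,Y')=\langle TY,Y'\rangle$ with $T$ self-adjoint and commuting with the representation, irreducibility of $\tilde\m$ forces $T=c\,\mathrm{Id}$, and the trace identity $\sum_i B(Y_i,Y_i)=|Y_0|^2$ fixes $c=|Y_0|^2/\dim(\tilde\m)$. Parts (1) and (4) then both drop out once you know $\tilde\m$ is irreducible (from transitivity for (1), and from $\dim=2$ plus $\tilde\m\cap\m_0=\{0\}$ and the connectedness argument for (4)). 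The paper instead proves (1) directly: it uses a single $h_0$ with $\Ad(h_0)Y_j=Y_1$ and bi-invariance of $dh$ to show all the diagonal entries $B(Y_j,Y_j)$ agree, then sums over the basis. For (4) the paper identifies $\Ad(H)|_{\tilde\m}$ with $SO(2)$, argues that the pushed-forward measure is the normalized Haar measure on $SO(2)$, and computes the off-diagonal integral explicitly as zero. Your approach is more uniform and avoids both the explicit $h_0$ and the explicit $SO(2)$ computation, at the cost of invoking Schur's lemma for real representations (your eigenspace argument, which works cleanly because $T$ is self-adjoint, so one does not have to worry about the real Schur lemma only giving scalars from $\R$, $\C$, or $\HH$). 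Parts (2) and (3) are proved essentially as in the paper, and your formulation of (3) (the sign is constant on a connected group) is in fact cleaner than the paper's somewhat compressed wording.
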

\begin{proof}
Suppose that $\Ad(h_0)$ take $Y_j$ to $Y_1$. Then   
$${\begin{aligned}
 \int_H \<Y_j, \Ad(h)Y_0\> ^2 dh
  &=\int_H \<Y_j, \Ad( h_0^{-1}) \Ad(h)(Y_0)\> ^2dh=\int_H \<Y_1,  \Ad(h)(Y_0)\> ^2dh,\end{aligned}}$$   
using the $\Ad_H$-invariance of the Haar measure and the inner product. Set $d=\dim(\tilde \m)$.
Since $\tilde\m$ is an invariant space of $\Ad_H$ we sum over the basis vectors to obtain
 $$\sum_{j=1}^{d} \int_H\<Y_j, \Ad(h)Y_0\>^2 dh=\int_H \sum_{j=1}^{d} \<\Ad(h^{-1})Y_j, Y_0\>^2 dh=|Y_0|^2.$$
It follows that  $\int_H\<Y_j, \Ad(h) Y_0\>^2 dh={|Y_0|^2\over \dim(\tilde\m)}$.
For $i\not =j$, there exists $h^{i,j}\in H$  such that $\Ad(h^{i,j})(Y_i)=Y_i$ and $\Ad(h^{i,j})(Y_j)=-Y_j$. 
By the bi-invariance of the Haar measure and the $\Ad_H$ -invariance of the inner product again we obtain,
  $${\begin{aligned}
 &\int_H \<Y_i, \Ad(h)Y_0\> \<Y_j, \Ad(h) Y_0\> dh\\
  &=\int_H \<Y_i, \Ad( h^{i,j}) \Ad(h)(Y_0)\> \<Y_j, \Ad(h^{i,j})\Ad(h) Y_0\> dh\\
  &=-\int_H \<Y_i,  \Ad(h)(Y_0)\> \<Y_j, \Ad(h) Y_0\> dh.\end{aligned}}$$   
and part (2) follows.

We observe that no  $\Ad_H$-invariant subspace of $\m$, containing no trivial $\Ad_H$-invariant vectors, can be one dimensional, for otherwise,
 every orthogonal transformation $\Ad(h)$ takes a $Y_0\in \tilde\m$ to itself or to $-Y_0$, and one of which takes $Y_0$ to $-Y_0$, violating connectedness. 
 The connected component of the compact matrix group $\Ad(H)$ is its normal subgroup of the same dimension to which we apply the following facts to conclude the lemma.

Suppose that  $d=2$, then $\tilde \m$ is irreducible. If we identify $\tilde\m$ with $\R^2$ then $H$ can be identified with $SO(2)$.
 Let $\{Y_1, Y_2\}$ be an orthonormal basis of $\m$. 
 Then $$\int_H \<Y_1, \Ad(h)Y_0\> \<Y_2, \Ad(h) Y_0\> dh=\int_{SO(2)} \<Y_1, gY_0\> \<Y_2, g Y_0\> dg$$
where $dg$ is the pushed forward measure  $\Ad^*(dh)$. Since $dg$ is bi-invariant it is the standard measure on $SO(2)$, normalised to have volume $1$. Thus the above integral vanishes. This can be computed explicitly. The same proof as in Proposition \ref{totally-symmetric} shows 
 that $\int_H\<Y, \Ad(h) Y_0\>^2 dh={|Y_0|^2\over 2}$.
\end{proof}

\begin{proposition}
\label{totally-symmetric}
Let $\tilde\m$ be an  $\Ad_H$ invariant subspace of $\m$ not containing any non-trivial $\Ad_H$-invariant vectors.
Suppose  $Y_0\in \tilde \m$ and let $$\bar \L f
 ={1\over \lambda(Y_0)} \int_H  \nabla^L df \left( (\Ad(h) (Y_0))^*, (\Ad(h)(Y_0) )^*\right)dh.$$
  Then
  $\bar \L={|Y_0|^2\over \lambda(Y_0)  \dim(\tilde\m)} \Delta_{\tilde \m}$ under one of the following conditions:
\begin{itemize}
\item[(a)]  $\dim(\tilde \m)=2$;
\item[(b)] $\Ad_H$ acts transitively and quasi doubly transitively on 
the unit sphere of $\tilde\m$.
\end{itemize}

 \end{proposition}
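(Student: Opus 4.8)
\textit{The plan.} The strategy is to expand the integral defining $\bar\L$ in a fixed orthonormal basis $\{Y_1,\dots,Y_d\}$ of $\tilde\m$, with $d=\dim(\tilde\m)$, and reduce the whole computation to the second-moment formulas already established in Lemma \ref{transitive-lemma2}. First I would use that $\tilde\m$ is $\Ad_H$-invariant, so $\Ad(h)(Y_0)\in\tilde\m$ and hence $(\Ad(h)(Y_0))^* = \sum_{i=1}^d \<\Ad(h)(Y_0), Y_i\>\, Y_i^*$ for every $h\in H$. Substituting this into the bilinear expression $\nabla^L df\big((\Ad(h)(Y_0))^*, (\Ad(h)(Y_0))^*\big)$ and integrating over $H$ gives
\begin{equation*}
\bar\L f = \frac{1}{\lambda(Y_0)}\sum_{i,j=1}^d \left(\int_H \<\Ad(h)(Y_0), Y_i\>\,\<\Ad(h)(Y_0), Y_j\>\, dh\right)\nabla^L df(Y_i^*, Y_j^*).
\end{equation*}

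Next I would invoke Lemma \ref{transitive-lemma2}. Under hypothesis (b), part (1) of that lemma gives $\int_H \<\Ad(h)(Y_0), Y_i\>^2 dh = |Y_0|^2/d$ for every $i$, while part (2) gives the vanishing of the off-diagonal integrals for $i\neq j$; under hypothesis (a), part (4) gives exactly the same conclusion $\int_H \<\Ad(h)(Y_0), Y_i\>\<\Ad(h)(Y_0), Y_j\> dh = \tfrac{|Y_0|^2}{2}\,\delta_{i,j}$ with $d=2$. In either case the matrix of second moments is the scalar matrix $\tfrac{|Y_0|^2}{d}\,\delta_{i,j}$, whence
\begin{equation*}
\bar\L f = \frac{|Y_0|^2}{\lambda(Y_0)\,d}\sum_{i=1}^d \nabla^L df(Y_i^*, Y_i^*).
\end{equation*}

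Finally I would identify the right-hand side with $\Delta_{\tilde\m}f$: since $\nabla^L$ is the left-invariant connection and the $Y_i^*$ are left-invariant, $\nabla^L_{Y_i^*}Y_i^*=0$, so $\nabla^L df(Y_i^*,Y_i^*)=L_{Y_i^*}L_{Y_i^*}f$, and $\sum_i L_{Y_i^*}L_{Y_i^*}=\Delta_{\tilde\m}$ by definition; this sum, and hence the conclusion $\bar\L = \tfrac{|Y_0|^2}{\lambda(Y_0)\dim(\tilde\m)}\Delta_{\tilde\m}$, does not depend on the orthonormal basis chosen. I do not expect a serious obstacle: all the content sits in Lemma \ref{transitive-lemma2}, and the only point needing a little care is to check that hypotheses (a) and (b) both feed into that lemma — the case $d=2$ through part (4), itself reduced to the rotation-invariant measure on $SO(2)$ — so that the two cases can be handled uniformly once the second-moment matrix is known to be a multiple of the identity.
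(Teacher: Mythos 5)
Your proof is correct and follows essentially the same route as the paper's: expand $(\Ad(h)Y_0)^*$ in the orthonormal basis of $\tilde\m$, reduce the coefficients of $\bar\L$ to the second-moment integrals, and apply Lemma \ref{transitive-lemma2} (parts (1)–(2) for case (b), part (4) for case (a)) to see the matrix is scalar. The only addition over the paper's terse version is the explicit observation that $\nabla^L_{Y_i^*}Y_i^*=0$ identifies $\sum_i\nabla^L df(Y_i^*,Y_i^*)$ with $\Delta_{\tilde\m}f$, which is implicit in the paper and harmless to spell out.
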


\begin{proof}
Expanding $(\Ad(h)(Y_0))^* $ in $\{Y_j^*\}$ we see that
$\bar \L f=\sum_{i,j=1}^d a_{i,j} (Y_0)\nabla^L df(Y_i^*,Y_j^*)$ where
 $a_{i,j}(Y_0)={1\over \lambda(Y_0)}\int_{H} \<Y_i,  \Ad(h)Y_0)\>  \<Y_j, \Ad(h)(Y_0)\>\, dh$.
If $\dim(\tilde \m)=2$ then $\Ad(H)$ can be identified with $SO(2)$. In both cases, by Lemma \ref{transitive-lemma2}, $a_{11}(Y_0)=\dots =a_{dd}(Y_0)$ and the cross terms disappear.
\end{proof}

 Since $\ad(\h)$ consists of skew symmetric matrices, %see (\ref{ad-skew-symmetric}), 
 with respect to the invariant Riemannian metric on the homogeneous manifold $G/H$, $H$ acts as a group of isometries. So do the left actions by elements of $H$ on $\pi_*(\tilde\m)$. 
 Since $\pi$ takes the identity to the coset $H$, we may rule out translations and every element of $\Ad(h)$ is rotation
 on $\tilde\m$.  We identify $\tilde\m$ with $\R^{d}$, where $d$ is the dimension of $\tilde \m$. Since $H$ is connected, they are orientation preserving rotations.  Since $H$ is compact, so is $\Ad(H)$ and $\Ad(H)$ can be identified with a compact subgroup of $SO(d)$.  
 
  It would be nice to classify subgroups of $SO(d)$ that acts transitively and quasi doubly transitively on the spheres.
Sub-groups of $SO(d)$ acting transitively on the spheres are completely classified and coincide with the list of possible holonomy groups of simply connected, see  \cite{Berger-55},  non-symmetric irreducible complete Riemannian manifolds \cite[J. Simons]{Simons}. See also \cite[E. Heintze, W. Ziller]{Heintze-Ziller}.
%They are  $O(d)$, $SO(d)$, $U(d)\subset SO(2d)$, $SU(d)\subset SO(2d)$, $sp(d)sp(1)\subset SO(4d)$, $sp(d)U(1)\subset sp(d)sp(1)$, $sp(d)\subset SO(4d)$,
%$G_2\subset SO(7)$, $spin(7)\subset SO(8)$, $spin(9)\subset SO(16)$, see \cite{Besse}.
 The large subgroups of $SO(d)$ are reasonably well understood, which is due to a theorem of Montgomery and Samelson \cite{Montgomery-Samelson} and also a theorem of M. Obata \cite{Obata}. 
There are however two exceptions: the non-simple group $SO(4)$ and also $SO(8) $.
 The latter has two interesting subgroups: the 21 dimensional  $spin(7)$ and the exceptional $14$ dimensional 
  compact simple Lie group $G_2$, automorphism of the Octonians.  We end this discussion with the following remark.
   
%If $K$ is a compact subgroup of $SO(n)$ and $SO(n)/K$ is isotropy irreducible, then $K$ is a maximal connected subgroup of $SO(n)$. We cannot use this fact directly as we do not know
%the representations of $G$ and $H$ on $\g$ are faithful.
%A representation $\rho$ of a Lie group is faithful if $\rho$ is injective.
%  If $\Ad_H$  acts faithfully on $\m$, i.e. no element of $\Ad(H)$ leaves $\m$ invariant, then $\m_0=\{0\}$. The former is equivalent to that the linear isotropy representation acts faithfully on the tangent space $T_oM$, and is equivalent to
%that $G$ acts freely on the linear bundle of $M$, i.e.  no element of $H$, except the identity, leaves a linear frame fixed.
%  The effective assumption would imply . 

  \begin{remark}\label{lemma-transitive}
  Let  $d=\dim(\tilde\m)$ where $ \tilde\m$ is $\Ad_H$-invariant.
 Suppose that $\dim(\Ad(H)|_{\tilde \m})\ge \dim(SO(d-1))$; or suppose that $\tilde \m$ has no two dimensional invariant subspace with $d\ge 13$ and suppose that $\dim(\Ad(H)|_{\tilde \m})\ge  \dim( O(d-3))+\dim( O(3) )+1$.
Then   the identity component of $\Ad(H)_{\tilde \m}$ is \begin{enumerate}
  \item  $SO(d)$ if $d\not =4 ,8$; \item $SO(4)$ or  $S^3$ if $d=4$;
\item $SO(8)$ or  $spin(7)$, if $d=8$.
\end{enumerate}
Furthermore, $\Ad_H: H\to \LL(\tilde \m; \tilde \m)$ acts transitively on the unit sphere of $\tilde \m$, and acts quasi doubly transitively on $\tilde\m$ if $d >2$. 
%\item Suppose that $\Ad_H$ acts transitively on the unit sphere of $\tilde \m$. Then $\Ad_H$ acts quasi doubly transitively on unit spheres of $\tilde\m$ if $\dim(\tilde \m)>2$. (The transitive property holds if $d\ge 13$ and $\dim(\Ad(H))\ge  \dim( O(d-3))+\dim( O(3) )+1$.)
\end{remark}

Our aim is to prove that $\Ad_H$ acts transitively and quasi doubly transitively, using properties of connected closed subgroups of
 the rotation groups, we are not concerned with whether the image of $H$ under the representation, considered as subgroups of $SO(d)$, $d=\dim(\m_l)$, is connected. It is sufficient to prove its identity component has the required property.  The connected component of the compact matrix group $\Ad(H)$ is its normal subgroup of the same dimension to which we apply the following facts to conclude the lemma. For our purpose, the image of $\Ad_H$ acts faithfully on $\m_l$
  in the sense that if $H_0$ is the group of identity transformations, acting on $\m_l$, then $H/H_0$
  has the same orbit as $H_0$ and both act transitively.

 We have already seen that  no  $\Ad_H$-invariant subspace of $\m$ can be one dimensional.
  By a theorem of  D. Montgomery and H. Samelson\cite[Lemma 3, 4]{Montgomery-Samelson},  there is no proper closed subgroup $H'$ of $SO(d)$ of dimension greater than the dimension of $SO(d-1)$. If $H'$ is a connected closed sub-group of $SO(d)$
 of the dimension of $SO(d-1)$, then $H$ is continuously isomorphic to $SO(d-1)$ or to the double cover of $SO(d-1)$.
  If $d\not =4,8$,  then $H$ is conjugate with $Q(d-1)$, the sub-matrix of $SO(d)$ leaving invariant the first axis. Note that 
  the identity component of $\Ad(H)$ has the dimension of $\Ad(H)$. Since $\m$ has no one dimensional invariant subspace we conclude that $\Ad(H)=SO(d)$ for $d=4,8$.

 In dimension $4$, the subgroup $S^3$ is not conjugate with $Q(d-1)$.
 The subgroup $S^3$ acts on itself transiently, freely, and leaves no one dimensional sub-space invariant.
It is also doubly transitive.
  
  If $d=8$, the $21$-dimensional sub-group $spin(7)$ is embedded into $SO(8)$ by the spin representation, it acts transitively on   $S^7$ and its isotropy subgroup at a point is $G_2$. We learnt from Dmitriy Rumynin that
  $G_2\sim SU(4) \subset spin(7)$ and $G_2$ act transitively on unit spheres of $S^7$ and transitive on any two pairs of orthogonal unit vectors. Finally we quote a theorem from \cite[M. Obata]{Obata} : if  $K$ is a Lie group of orthogonal $d\times d$ matrices
where $d\ge 13$ and if $\dim (O(d)) > \dim(K)\ge \dim( O(d-3))+\dim( d(3) )+1$,  then $K$ is reducible in the real vector space. Since $ \m_l$ is irreducible, the group $\Ad(H)$ must be $SO(d)$ where $d=\dim(m_l)$, and  so the analysis on $SO(d)$ can be applied.

\section{Laplacian like operators as effective limits}
\label{section6}

Let $\{Y_j\}$ be an orthonormal basis of $\tilde \m$. 
Denote $\Delta_{\tilde\m}= \sum_{j=1}^mL_{Y_j^*}L_{Y_j^*}$, this is the `round' operator on $\tilde \m$. For the left invariant connection $\nabla^L$, $\Delta_{\tilde\m}= \trace_{\tilde \m} \nabla^L d$ is a `generalised' horizontal Laplacian and is independent of the choice of  the basis. 
In the special case where $G$ is isomorphic to the Cartesian product of a compact group and an additive vector group, there is a bi-invariant metric,  $\nabla$ is the Levi-Civita connection for a bi-invariant metric, then $\nabla_{X}X^*=0$ and $\Delta_{\tilde \m}=\trace_{\tilde\m} \nabla d$.
In the irreducible case, this operator $\Delta_{\m}$ is the {\it horizontal Laplacian} and  we denote the operator by $\Delta^{hor}$.  Its corresponding diffusion is a {\it  horizontal Brownian motion}.
 In the reducible case, we abuse the notation and define a similar concept. 
  Since the limit operator in Theorem \ref{convergence-and-limit} is given by averaging the action of $\Ad_H$, we expect that the size of the isotropy group
 $H$ is correlated with the `homogeneity' of the diffusion operator, which we explore in the remaining of the section.

\begin{definition}
 A sample continuous Markov process is  a (generalised) horizontal  Brownian motion if its Markov
generator is ${1\over 2} \Delta_{\tilde \m}$; it is a (generalised) scaled horizontal  Brownian motion with scale $c$ if
its Markov generator is ${1\over 2} c \Delta_{\tilde\m}$ for some constant $c\not =0$. 
\end{definition}

We use the notation in Theorem \ref{convergence-and-limit}. Let ,  $\alpha(Y_j, Y_0)=\<Y_j, \Ad(h)(Y_0)\>$ and $$\bar \L =\sum_{i,j=1}^{d}a_{i,j}(Y_0)\;L_{Y_i^*}L_{Y_j^*}, \quad a_{i,j}(Y_0)=-\int_H  \alpha(Y_0,Y_i) \,\L_0^{-1} ( \alpha(Y_0, Y_j) ) dh.$$
If the representation space of $\Ad_H$ were complex, then $ \<Y,  \Ad(h)Y_0)\>$ where
$Y, Y_0\in \m_l$ are known as trigonometric functions.

\begin{theorem}\label{Degenerate}
Suppose that $\{A_1, \dots, A_N\}$ generates $\h$, $\L_0=\half \sum_{k=1}^N (A_k)^2$, and $Y_0\in  \m_l$ where $l\not =0$.
 If $Y_0=\sum_{m=1}^d c_m Y_m$, then
$$\bar \L=\sum_{m=1}^d\f {(c_m)^2} {\lambda(Y_m)}\f {1}{ \dim(\m_l)}\Delta_{\m_l}.$$
If furthermore $\{A_1, \dots, A_N\}$ is an o.n.b. of $\h$, then $
\bar \L=\f {|Y_0|^2}{\dim(\m_l)\lambda_l}\Delta_{\m_l}$.
\end{theorem}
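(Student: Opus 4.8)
The plan is to read off the expression for $\bar\L$ from Theorem~\ref{convergence-and-limit}, to diagonalise $c(\L_0)$ on $\m_l$, to solve the relevant Poisson equations eigenfunction by eigenfunction, and finally to recognise the resulting coefficient matrix as a scalar multiple of the identity by an invariance argument.

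I would first note that, since each $\ad(A_k)$ is skew-symmetric for the $\Ad_H$-invariant inner product (equation~(\ref{ad-skew-symmetric})), the map $c(\L_0)|_{\m_l}=\frac12\sum_k\ad^2(A_k)|_{\m_l}$ is a symmetric negative semi-definite endomorphism of $\m_l$; its kernel is $\{Y:\ad(A_k)Y=0\ \forall k\}$, which by iterated brackets (using that $\{A_k\}$ generates $\h$) equals $\m_l\cap\m_0=\{0\}$, so $c(\L_0)|_{\m_l}$ is in fact negative definite. Hence there is an orthonormal basis $\{Y_1,\dots,Y_d\}$ of $\m_l$, $d=\dim(\m_l)$, with $c(\L_0)Y_m=-\lambda(Y_m)Y_m$ and $\lambda(Y_m)>0$, and we write $Y_0=\sum_m c_m Y_m$ in this basis. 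By Lemma~\ref{compute-L0} each function $h\mapsto\langle\Ad(h)Y_m,Y\rangle$, $Y\in\m$, is an $\L_0$-eigenfunction with eigenvalue $-\lambda(Y_m)$; it has zero $dh$-mean because $\bar Y_m=0$ (Lemma~\ref{le:transitive}, $H$ compact hence unimodular), so $\L_0^{-1}\langle\Ad(\cdot)Y_m,Y\rangle=-\lambda(Y_m)^{-1}\langle\Ad(\cdot)Y_m,Y\rangle$ (existence and uniqueness of the mean-zero solution is Lemma~\ref{lemma4.6}). Substituting $\alpha(Y_0,Y_j)=\sum_m c_m\langle\Ad(\cdot)Y_m,Y_j\rangle$ into the formula of Theorem~\ref{convergence-and-limit}, and using $\nabla^L_{Y_i^*}Y_j^*=0$, gives
\begin{equation*}
\bar\L f=\sum_{i,j=1}^d M_{ij}\,\nabla^L df(Y_i^*,Y_j^*),\qquad M_{ij}=\sum_{m,n=1}^d\frac{c_m c_n}{\lambda(Y_m)}\int_H\langle\Ad(h)Y_n,Y_i\rangle\langle\Ad(h)Y_m,Y_j\rangle\,dh.
\end{equation*}

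The key point, which I would establish next, is that $M$ is symmetric and that the associated endomorphism of $\m_l$ is a scalar. Because $H$ is compact its Haar measure is bi-invariant, so by Lemma~\ref{le-symmetric} the operator $\L_0=\frac12\sum(A_k)^2$ is self-adjoint on $L^2(H,dh)$; hence $M_{ij}=-\langle\alpha(Y_0,Y_i),\L_0^{-1}\alpha(Y_0,Y_j)\rangle_{L^2(H)}$ is symmetric in $i,j$ (so $\bar\L$ carries no first-order part). Let $T:\m_l\to\m_l$ be the self-adjoint endomorphism defined by $\langle TY_j,Y_i\rangle=M_{ij}$, i.e.\ $Tv=\int_H\langle\Ad(h)Z_1,v\rangle\,\Ad(h)Y_0\,dh$ with $Z_1=\sum_m\lambda(Y_m)^{-1}c_m Y_m$. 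The substitution $h\mapsto gh$ together with left-invariance of $dh$ shows $T\Ad(g)=\Ad(g)T$ for all $g\in H$; a symmetric $\Ad_H$-equivariant operator has $\Ad_H$-invariant eigenspaces, so $\m_l$ being $\Ad_H$-irreducible forces $T=\mu\,\id_{\m_l}$ for a single real number $\mu$ --- this is where the real Peter--Weyl orthogonality relation for $\m_l$ may be invoked instead.

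Finally I would compute $\mu$ by taking traces: using $\sum_j\langle u,Y_j\rangle\langle v,Y_j\rangle=\langle u,v\rangle$ for $u,v\in\m_l$ and the $\Ad_H$-invariance of $\langle\cdot,\cdot\rangle$,
\begin{equation*}
\mu\,\dim(\m_l)=\trace T=\sum_j M_{jj}=\sum_{m,n}\frac{c_m c_n}{\lambda(Y_m)}\int_H\langle\Ad(h)Y_n,\Ad(h)Y_m\rangle\,dh=\sum_{m,n}\frac{c_m c_n}{\lambda(Y_m)}\,\delta_{mn}=\sum_{m=1}^d\frac{c_m^2}{\lambda(Y_m)},
\end{equation*}
so $\bar\L f=\mu\sum_j L_{Y_j^*}L_{Y_j^*}f=\mu\,\Delta_{\m_l}f$ with $\mu=\dim(\m_l)^{-1}\sum_m c_m^2/\lambda(Y_m)$, which is the first formula ($\Delta_{\m_l}$ being independent of the basis). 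If moreover $\{A_1,\dots,A_N\}$ is orthonormal, Lemma~\ref{lambda}(1) gives $c(\L_0)|_{\m_l}=-\lambda_l\,\id_{\m_l}$, so $\lambda(Y_m)=\lambda_l$ for every $m$ and $\sum_m c_m^2/\lambda(Y_m)=|Y_0|^2/\lambda_l$, whence $\bar\L=\dfrac{|Y_0|^2}{\dim(\m_l)\,\lambda_l}\Delta_{\m_l}$. I expect the main obstacle to be the symmetry of $M$ --- which rests on the absence of a drift term in $\L_0$ and on the self-adjointness of $\L_0$ on $L^2(H,dh)$ --- together with the reduction that a symmetric $\Ad_H$-equivariant operator on an irreducible module must be a scalar; everything else is bookkeeping with the eigenfunctions of $\L_0$.
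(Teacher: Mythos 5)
Your proof is correct, and it arrives at the paper's formula by a genuinely different route. The paper's own proof of Theorem \ref{Degenerate} also diagonalises $c(\L_0)$ on $\m_l$ and uses Lemma \ref{compute-L0} to invert $\L_0$ eigenfunction by eigenfunction, but it then invokes the (real) Peter--Weyl orthogonality relation $\int_H\langle Y_i,\Ad(h)Y_k\rangle\langle Y_j,\Ad(h)Y_l\rangle\,dh=\tfrac1d\delta_{ij}\delta_{kl}$ --- the version for orthogonal representations supplied by Rumynin's appendix --- to evaluate each integral directly and read off $a_{ij}=\tfrac1d\delta_{ij}\sum_m c_m^2/\lambda(Y_m)$. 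You instead avoid Peter--Weyl entirely: you observe that the coefficient matrix $M$ is symmetric (from the $L^2(H,dh)$-self-adjointness of $\L_0=\tfrac12\sum(A_k)^2$, Lemma \ref{le-symmetric}) and that the associated endomorphism $T$ of $\m_l$ is $\Ad_H$-equivariant, so the real form of Schur's lemma --- a symmetric equivariant endomorphism of a real irreducible module has an invariant eigenspace and hence is a scalar --- forces $T=\mu\,\id$, with $\mu$ recovered by the trace identity $\sum_j\langle u,Y_j\rangle\langle v,Y_j\rangle=\langle u,v\rangle$. Both give the same answer, but your route is more self-contained: it does not lean on the appendix, and it also makes the (needed but easy to overlook) symmetry of $M$ explicit, which in the paper is handled implicitly through the $\delta_{kl}$ in the orthogonality relation. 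One remark: the naive unitary Peter--Weyl formula stated in the body of the paper does not literally hold for real orthogonal representations of complex or quaternionic type (compare $\int_{SO(2)}\cos^2\theta\,d\theta/2\pi=1/2\neq 0$ against $\tfrac1d\delta_{12}\delta_{12}$); in those cases the conclusion of Theorem \ref{Degenerate} is nevertheless saved because $c(\L_0)|_{\m_l}$ is then automatically scalar, a fact your Schur-plus-trace argument does not need to track. So your proof is not only valid but slightly more uniform than the paper's.
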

\begin{proof} Set $d=\dim(\m_l)$. With respect to the $\Ad_H$-invariant inner product on $\g$, $\ad^2(A_k)$ is a 
self-adjoint linear map on $\m_l$. For an orthonormal basis  $\{Y_1, \dots, Y_d\}$  
of $\m_l$ consisting of eigenvectors of  $-{1\over 2}\sum_{k=1}^N \ad^2(A_k)$
with $\lambda(Y_j)$ the corresponding eigenvalues.
Then $\alpha(Y_j, Y_0)$ is an eigenfunction of $\L_0$ corresponding to the
  eigenvalue $ -\lambda(Y_0)$,  and $-\f{\alpha(Y_j, Y_0)} {\lambda(Y_0)}$ solves
 the Poisson equation with right hand side $\alpha(Y_j, Y_0)$, see Lemma \ref{compute-L0}.
  Evidently $\lambda(Y_j)\not =0$, for otherwise
  $\< \sum_k \ad^2(A_k)(Y_j), Y_j\>=-\sum_k |\ad(A_k)(Y_j)|^2=0$ which means $Y_j$ is in the kernel of $\ad_H$
which is possible only if $l=0$. 

Consequently,
  $${\begin{split}a_{i,j}(Y_0)&=-\sum_{m,m'} {c_mc_{m'}} \int_H\<\Ad(h)(Y_{m'}), Y_i\>\L_0^{-1} \<\Ad(h)(Y_{m}), Y_j\>dh\\
  &=\sum_{m,m'} \f{c_mc_{m'}}{\lambda(Y_m)} \int_H\<\Ad(h)(Y_{m'}), Y_i\> \<\Ad(h)(Y_{m}), Y_j\>dh. \end{split}}$$

By  Peter-Weyl's theorem, which  states in particular that if $V$ is an irreducible unitary representation of a compact Lie group $H$ and $\{Y_i\}$ an o.n.b. of $V$,
then the collection of functions $\{ \<Y_i,  \rho(\cdot)Y_k)\>\} $, where $V$ ranges through all equivalent classes of irreducible unitary representations  $\rho$,  is orthogonal with norm $\sqrt {\dim(V)}$, and these functions determine $L^2(G)$. In particular  
$$\int_{H} \<Y_i,  \Ad((h)Y_k)\>  \<Y_j, \Ad(h)(Y_l)\>\, dh=\f 1 d\delta_{ij}\delta_{kl}.$$  
We are grateful to Dmitriy Rumynin for providing us with a version of this theorem, valid for orthogonal  representations, which is appended at the end of the paper.  See also Iwahori  \cite{Iwahori} on representations of real Lie algebras.

From this we see that $a_{i,j}=0$ for $i\not =j$ 
 and
\begin{equation*}
a_{i,i}(Y_0)
 =\sum_{m=1}^d\f {(c_m)^2} {\dim(\m_l)\lambda(Y_m)}, \qquad \bar \L=\f{ 1} {d}\sum_{m=1}^d\f {(c_m)^2} {\lambda(Y_m)}\sum_{i=1}^m L_{Y_i}L_{Y_i}.
\end{equation*}
Then part (1) follows by seeting
   $${1\over \lambda'(Y_0)}=  \sum_{m=1}^d{(c_m)^2\over \lambda(Y_m)}.$$
Note that  $\sum_{i=1}^m L_{Y_i}L_{Y_i}=\trace \nabla^L d$ is independent of  the choice of the basis vectors $\{Y_j\}$
so is also $\bar \L$, as a Markov generator to (\ref{corollary3.5}), which means $\lambda'(Y_0)$ is independent of the choice of the basis vectors. For the case $\{A_1, \dots, A_N\}$ is an orthonormal basis of $\h$, we apply Lemma \ref{lambda} to conclude.
   \end{proof}
   
   If $Y_0$ belongs to a subspace of $\tilde \m$, say $Y_0\in \m_l\oplus \m_{l'}$, then an analogous claim holds if
  the representations $\Ad_H$ on $\m_l$ and $\m_{l'}$ are not equivalent, especially if $\m_l$ and $\m_{l'}$ have different dimensions.

Let $d=2$. Then $\sum_k\ad^2(A_k)|_{\m_l}=\lambda_l \id|_{\m_l}$, for some number $\lambda_l\not =0$, if and only if $\bar \L= \f {|Y_0|^2}{2 \lambda_l}\Delta_{\m_l}$. Indeed, $H$ is essentially $SO(2)$.
Let $Y_1, Y_2$ be a pair of orthogonal unit length eigenvectors of the linear map ${1\over 2}\sum_k \ad^2(A_k)$, restricted to $\m_l$.
Let $Y_0=c_1Y_1+c_2Y_2$. For $j',j,k',k=1,2$, let
$$b_{j',j}^{k,k'}(Y_0)={c_kc_{k'}\over \lambda (Y_{k'})}\int_{S^1}  \< \Ad(e^{i2\pi\theta})(Y_k), Y_{j'}\>\<\Ad(e^{i2\pi\theta})(Y_{k'}), Y_j\> d\theta .$$
By direct computation, it is easy to see that $b_{j',j}^{k',k} =0$ unless $j' =j$, $k=k'$ or $j'=k, j=k'$, 
In particular, we examine the cross term:

\begin{equation*}
{\begin{aligned}
a_{1,2}(Y_0)&=\sum_{k,l=1}^2 b_{1,2}^{k,l}=b_{1,2}^{2,1}+ b_{1,2}^{1,2}\\
&= {c_1c_2\over \lambda (Y_2)}\int_{S^1}  \< \Ad(e^{i2\pi\theta})(Y_1), Y_1\>\<\Ad(e^{i2\pi\theta})(Y_2), Y_2\> d\theta\\
&+{c_1c_2\over \lambda (Y_1)}\int_{S^1}  \< \Ad(e^{i2\pi\theta})(Y_2), Y_1\>\<\Ad(e^{i2\pi\theta})(Y_1), Y_2\> d\theta.
\end{aligned}}
\end{equation*}

Direct computation shows that \begin{equation*}
{\begin{aligned}
a_{1,2}(Y_0)&={c_1c_2\over \lambda (Y_2)} \int \cos^2(i2\pi \theta)d\theta- {c_1c_2\over \lambda (Y_1)}\int \sin ^2(i2\pi \theta)d\theta\\
&={1\over 2} ({c_1c_2\over \lambda (Y_2)}-{c_1c_2\over \lambda (Y_1)}).
\end{aligned}}
\end{equation*}
Thus $$\bar \L=({c_1c_2\over \lambda (Y_2)}-{c_1c_2\over \lambda (Y_1)})L_{Y_1}L_{Y_2}+\f 1 2\left( \f {c_1^2}{ \lambda(Y_1)}+\f {c_2^2}{ \lambda(Y_2)}\right)\Delta_{\m_l}.$$
Thus $a_{1,2}(Y_0)$ vanishes if and only if $\lambda_l:=\lambda(Y_2)=\lambda(Y_1)$, the latter allows us to conclude that
$a_{1,1}(Y_0)=a_{2,2}(Y_0)=\f 12\f{|Y_0|^2}{\lambda_l}$
and $\f 12\sum_k\ad^2(A_k) =\lambda_l \id$.

We next work with semi-simple Lie groups. A Lie algebra is simple if it is not one dimensional and if $\{0\}$ and $\g$ are its only ideals; it is semi-simple if it is the direct sum of simple algebras. CartanÕs criterion for semi-simplicity states that $\g$ is semi-simple if and only if its killing form is non-degenerate. Another useful criterion is that a lie algebra is semi-simple if and if it has no solvable (i.e. Abelian) ideals. The special unitary group $SU(n)$ is semi-simple if  $n \ge 2$; $SO(n)$ is semi-simple for $n\ge 3$;
$SL(n,\R)$ is a non-compact semi-simple Lie group for $n\ge 2$; $\so(p,q)$ is semi-simple for $p+q\ge 3$.
%Note that $U(n)$ is not semi-simple.

%  Multiplication by unit quaternions are represented by orthogonal matrices.
% In other words, conjugation by an element $h$ of quaternions, $Y\in \HH\to hYh^{-1}\in \HH$ is an element of $SO(4)$. When restricted to the subspace of 
% pure quaternions, this linear map is a rotation in $SO(3)$. 

 \begin{corollary}
 \label{corollary-symmetric}
 Let $H$ be a maximal torus group of a semi-simple group $G$. Let $Y_0\in \m_l$
 and suppose that $A_0=0$ and $\{A_i\}$ generates $\h$. Then
   $\bar \L={|Y_0|^2\over 2\lambda_l}\Delta_{\m_l}$ where $\lambda_l$ is determined by ${1\over 2}\sum_{k=1}^p \ad^2(A_k)=-\lambda_l \id_{\m_l}$. 
   \end{corollary}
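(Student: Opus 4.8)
\textit{Proof proposal.} The plan is to deduce the corollary directly from Theorem~\ref{Degenerate}; the key structural input is that, when $H$ is a torus, every nontrivial irreducible $\Ad_H$-invariant subspace of $\m$ is two-dimensional. Indeed, since $H$ is a maximal torus of the semi-simple group $G$ it is compact, connected and abelian, and the root-space decomposition of $\g$ relative to $\h$ exhibits $\Ad_H$ on $\m$ as an orthogonal sum of subspaces on each of which $H$ acts nontrivially through a homomorphism into $SO(2)$; equivalently, any nontrivial irreducible \emph{real} representation of a compact connected abelian group has dimension exactly $2$. Hence in the decomposition $\m=\m_0\oplus\m_1\oplus\dots\oplus\m_r$ one has $\dim(\m_l)=2$ for every $l\neq0$. (When $G$ is compact, $\h$ is moreover its own centraliser, so $\m_0=\{0\}$; in general $\m_0$ may be nonzero, but this is immaterial here since we only treat $Y_0\in\m_l$ with $l\neq0$, and such $\m_l$ are two-dimensional because $H$ is abelian.)

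Fix $l\neq0$ with $Y_0\in\m_l$. By (\ref{ad-skew-symmetric}) and the $\Ad_H$-invariance of the inner product, each $\ad(A_k)$ restricts to a skew-symmetric endomorphism of $\m_l$; as $\dim(\m_l)=2$ this forces $\ad(A_k)|_{\m_l}=\theta_k J$ for some $\theta_k\in\R$, where $J$ is the rotation generator of $\m_l$ and $J^2=-\id_{\m_l}$. Therefore $\ad^2(A_k)|_{\m_l}=-\theta_k^2\,\id_{\m_l}$ and
\[
-\tfrac12\sum_{k=1}^N \ad^2(A_k)\big|_{\m_l}=\Bigl(\tfrac12\sum_{k=1}^N\theta_k^2\Bigr)\id_{\m_l}=:\lambda_l\,\id_{\m_l}.
\]
Moreover $\lambda_l>0$: if $\lambda_l=0$ then all $\theta_k=0$, so $\ad(A_k)|_{\m_l}=0$ for every $k$; since $\{A_k\}$ generates---hence, $\h$ being abelian, spans---$\h$, this would give $\ad(Z)|_{\m_l}=0$ for all $Z\in\h$, i.e. $\m_l\subseteq\m_0$, contradicting $l\neq0$. (This is also contained in Lemma~\ref{lambda}(1).)

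Finally I would invoke Theorem~\ref{Degenerate}, whose hypotheses ($A_0=0$, $\L_0=\tfrac12\sum_k(A_k)^2$, $Y_0\in\m_l$ with $l\neq0$) are exactly the present ones. Writing $Y_0=\sum_m c_m Y_m$ in an orthonormal eigenbasis $\{Y_m\}$ of $-\tfrac12\sum_k\ad^2(A_k)|_{\m_l}$ with eigenvalues $\lambda(Y_m)$, that theorem yields $\bar\L=\bigl(\sum_m c_m^2/\lambda(Y_m)\bigr)\tfrac{1}{\dim(\m_l)}\Delta_{\m_l}$. Since $-\tfrac12\sum_k\ad^2(A_k)|_{\m_l}=\lambda_l\,\id_{\m_l}$ is scalar, every orthonormal basis is an eigenbasis and $\lambda(Y_m)=\lambda_l$ for all $m$, so $\sum_m c_m^2/\lambda(Y_m)=|Y_0|^2/\lambda_l$; together with $\dim(\m_l)=2$ this gives $\bar\L=\dfrac{|Y_0|^2}{2\lambda_l}\Delta_{\m_l}$, as claimed. (Alternatively one may quote the two-dimensional computation preceding this corollary, which already identifies $\bar\L$ with $\tfrac{|Y_0|^2}{2\lambda_l}\Delta_{\m_l}$ once $\sum_k\ad^2(A_k)|_{\m_l}$ is known to be scalar.) There is no real analytic obstacle here: the substance is carried by Theorem~\ref{Degenerate}, and the only delicate points are the representation-theoretic fact $\dim(\m_l)=2$ and the bookkeeping caused by $\{A_k\}$ merely spanning $\h$ rather than being an orthonormal basis, which is why the general form of Theorem~\ref{Degenerate} must be used and then collapsed via two-dimensionality.
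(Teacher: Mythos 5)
Your proof is correct and takes essentially the same route as the paper's: establish $\dim(\m_l)=2$, establish that $-\tfrac12\sum_k\ad^2(A_k)$ is scalar on $\m_l$, then cite a result from \S\ref{section6}. The differences are in articulation rather than substance, and are worth noting because your version is arguably tighter. For $\dim(\m_l)=2$ you invoke the classical fact that a nontrivial irreducible real representation of a compact connected abelian group is two-dimensional; the paper reaches the same conclusion by complexifying $\g$, passing to a root space $\g_\alpha$, and reading off an invariant real plane from $Y_1,Y_2$ with $Y=Y_1+iY_2\in\g_\alpha$. For the scalar property you observe directly that each $\ad(A_k)|_{\m_l}$ is a skew-symmetric endomorphism of a two-dimensional space, hence $\theta_kJ$ for the rotation generator $J$, giving $\ad^2(A_k)|_{\m_l}=-\theta_k^2\id$; the paper argues this more indirectly (``only one specific $A_k$ makes non-trivial contribution ... restrict to the one dimensional torus sub-group''). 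For the final bridge the paper cites Proposition~\ref{totally-symmetric} together with the $d=2$ discussion, while you cite Theorem~\ref{Degenerate} and collapse its formula using scalarity; your alternative remark that the $d=2$ computation alone suffices once $\lambda(Y_1)=\lambda(Y_2)=\lambda_l$ is known is a useful safeguard, since it sidesteps any worry about applying Peter--Weyl orthogonality to a two-dimensional representation of complex type. Both routes land on $\bar\L=\tfrac{|Y_0|^2}{2\lambda_l}\Delta_{\m_l}$ and both correctly handle $\{A_k\}$ being merely a spanning set rather than an orthonormal basis.
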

 \begin{proof}
If $\h$ is the Cartan sub-algebra of the semi-simple Lie algebra $\g$ the dimension of $\m_l$ is $2$ and 
Proposition \ref{totally-symmetric} applies. 
It is clear that $\dim(\m_l)\ge 2$, for otherwise it consists of invariant vectors. It is well known that $\g$ is semi-simple if and only if $\g_\C=\g\otimes \C$ is semi-simple. Denote the complexification of $\h$ by $\mathfrak t$, it is a Cartan sub-algebra of $\g\otimes \C$.   Let $\alpha\in {\mathfrak t}^*$ be a weight for $\g_\C$. Take
$Y=Y_1+iY_2$ from the root space $\g_\alpha$ corresponding to $\alpha$. Let $X\in \h$,
and write $\alpha=\alpha_1+i\alpha_2$.  Then $$[X, Y]=[X, Y_1]+i[X, Y_2]=(\alpha_1(X)+i\alpha_2(X))  (Y_1+i Y_2),$$
and $Y_1$ and $Y_2$ generate an invariant subspace for $\ad_H$ following from:
$$[X, Y_1]=\alpha_1(X) Y_1-\alpha_2(X) Y_2; \quad [X,Y_2]=\alpha_1(X)Y_1+ \alpha_2(X) Y_2.$$
Restricted on each vector space $\g_\alpha$ only one specific $A_k$ in the sum $\sum_{k=1}^p \ad^2(A_k)$ makes non-trivial contribution to the corresponding linear operation. Thus we can restrict  to the one dimensional torus sub-group of $H$, which acts faithfully on $\g_\alpha$. By the earlier discussion,
 $\bar \L ={|Y_0|^2\over \lambda_l  \dim(\m_l)} \Delta_{ \m_l}={1\over 2 \lambda_l} {|Y_0|^2} \Delta_{ \m_l}$.
\end{proof}

 \begin{example}
\label{Grass}
Let $G_0(k,n)=SO(n)/SO(k)\times SO(n-k)$ be the oriented Grassmannian manifold of $k$ oriented planes in $n$ dimensions.  It is a connected manifold of dimension $k(n-k)$. The Lie group $SO(n)$ act on it transitively. Let $o$ be $k$-planes spanned by the first $k$ vectors of the standard basis in $\R^n$. Then $SO(k)\times O(n-k)$ keeps $\R^k$ fixed and as well as keeps the orientation of the first $k$-frame. Let  $\pi: SO(n)\to G(k,n)$, then $\pi O=\{O_1, \dots, O_k\}$, the first $k$ columns of the matrix $O$.
Let $\sigma(A)=SAS^{-1}$, where $S$ is the diagonal block matrix with $-I_k$ and $I_{n-k}$ as entries, be the symmetry map on $G$. The Lie algebra has the symmetric decomposition  $\so(n)=\h\oplus \m$:
  $$\h=\left\{\left( \begin{matrix} \so(k)&0\\0&\so(n-k)\end{matrix} \right)\right\}, \quad
\m=\left\{Y_{M}:=\left( \begin{matrix} 0&M\\-M^T&0\end{matrix} \right), M\in M_{k,n-k} \right\},$$
The adjoint action on $\m$ is  given by
$$\Ad(h)(Y_M)=
\left( \begin{matrix} 0&RMQ^T\\-(RMQ^T)^T&0\end{matrix} \right),$$
for $h=\left(\begin{matrix} R&0\\0&Q\end{matrix} \right)$,  $R\in SO(k)$ and $Q\in SO(n-k)$. 
  We identify $\m$ with $T_oG_0(k,n)$.  
 The isotropy action  on $T_oG_0(k,n)$ is now identified with the map
 $(\left(\begin{matrix} R&0\\0&Q\end{matrix} \right), M)\to RMQ^T$. 
Let  $M_1=(e_1,0\dots, 0)$. Then $RM_1Q^T=R_1Q_1^T$
where $R_1$ and $Q_1$ are the first columns of $R$ and $Q$ respectively. The orbit of $M_1$ by the Adjoint action generates a basis of $M_{k, n-k}$ and $M$ is isotropy irreducible.  Let $M_2=(0, e_2, 0,\dots, 0)$, $dR$ and $dQ$ the Haar measure on $SO(k)$ and $SO(n-k)$ respectively.  Then if $R=(r_{ij})$ and $Q=(q_{ij})$. Then
$$\int_H \<\Ad(h)M_1, M_1\>\<\Ad(h)(M_1), M_2\>dRdQ=\int_H(r_{11})^2q_{11}q_{12}dQdR=0.$$
Hence $\bar \L$ is proportional to $\Delta^{hor}$.
 It is easy to see that
$\bar \L$ satisfies the one step H\"ormander condition and is hypoelliptic on $G$.
 Given $M, N \in M_{k\times (n-k)}$ whose corresponding elements in $\m$ denoted by $\tilde M, \tilde N$. Then $[\tilde M, \tilde N]= N^TM-MN^T$.  There is a basis of $\h$ in this form. If $\{e_i\}$ is the standard basis of $\R^k$, $E_{ij}=e_ie_j^T$, $\{ E_{ij}-E_{ji}, i<j\}$ is a basis of $\so(\k)$.  

\end{example}

\section{Classification of limits on Riemannian homogeneous manifolds}
\label{section8}
Let $M$ be a smooth manifold with a transitive action by a Lie group $G$. 
 A Riemannian metric on $M$ is {\it $G$-invariant } if $\bar L_a$ for all $a\in G$, are isometries, in which case $M$ is a Riemannian homogeneous space and $G$ is a subgroup of  $\Iso(M)$. We identify $G$ with the group of actions and $M$ with
  $G/H$ where $H=G_o$, the subgroup fixing a point $o$.  By declaring $d\pi$ at the identity an isometry,  an $\Ad(H)$-invariant inner product on $\m$ induces a $G$ invariant inner product on $T_oM$ and vice versa.
This extends to a $G$ invariant Riemannian metric by defining: 
$\<(d\bar L_g)_o\pi_*Y_1, (d\bar L_g)_o \pi_* Y_2\>_{go}=\<\pi_*Y_1, \pi_* Y_2\>_o$. 
Furthermore,
 $G$-invariant metrics on $M$ are in one to one correspondence with 
 $\Ad_H$-invariant metrics on $\m$.   We should mention that,  by a theorem of 
S. B. Myers and N. E. Steenrod \cite{Myers-Steenrod},   the set of all isometries of a Riemannian manifold $M$ is a Lie
group under composition of maps, and  furthermore the isotropy subgroup $\Iso_o(M)$ is compact.  See also S. Kobayashi and K. Nomizu \cite{Kobayashi-NomizuI}.
If a subgroup $G$ of $\Iso(M)$ acts on $M$ transitively, $G/H$ is a {\it Riemannian homogeneous space}, in the sense that $G$ acts effectively on $M$.

  A connected Lie group admits an Ad-invariant metric if and only if $G$ is of compact type, i.e. $G$ is isomorphic to the Cartesian product of a compact group and an additive vector group
  \cite[J. Milnor , Lemma 7.5 ]{Milnor}, in which case we choose to use the bi-invariant metric for simplicity.
   The existence of an $\Ad_H$-invariant metric is less restrictive. If $H$ is compact by averaging we can construct an $\Ad_H$ invariant inner product in each irreducible invariant subspace of $\m$. If $\m=\m_0\oplus \bigoplus _{i=1}^{r} \m_i$ is an irreducible invariant decomposition of $\m$, 
   $\Ad_H$-invariant inner products on $\m$ are precisely of the  form $g_0+\sum_{i=1}^{r} a_i g_i$ where $g_0$ is any inner product on $\m_0$,  $a_i$ are positive numbers and $g_i$ are $\Ad_H$ invariant inner products on $\m_i$.   In particular an irreducible homogeneous space with
   compact $H$ admits a $G$-invariant Riemannian metric, unique up to homotheties. See   \cite[J. A. Wolf]{Wolf} and   \cite[A. L. Besse, Thm. 7.44] {Besse}.

In the remaining of the section we assume that $G$ is given a left invariant Riemannian metric which induces a $G$-invariant Riemannian metric on $M$ as constructed.
We ask the question whether the projection of the limiting stochastic process in Theorem \ref{Homogeneous} is a  Brownian motion like process. Firstly, we note that the projections of `exponentials', $g\exp(tX)$, are not necessarily Riemannian geodesics on $M$. They are not necessarily Riemannian exponential maps on $G$.
Also, given an orthonormal basis $\{Y_i, 1\le i\le  n\}$  of $\g$, $\sum_{i=1}^n L_{Y_i^*}L_{Y_i^*}$ is not necessarily the Laplace-beltrami operator.

A connected Lie group $G$  admits a bi-invariant Riemannian metric if and only if its Lie algebra admits an
$\Ad_G$ invariant inner product, the latter is equivalent to $\ad(X)$ is skew symmetric for every $X\in \g$.
It is also equivalent to that $G$ is the Cartesian product of a compact group and an additive vector space.
For $X, Y,Z\in \g$,  Koszul's formula for the Levi-Civita connection of the left-invariant metric on $G$ gives:
$$\<2\nabla_XY, Z\>=\<[X,Y],Z\>+\<[Z,Y],X\>+\<[Z,X],Y\>.$$
By polarisation,  $\nabla_XX=0$ for all $X$ if and only if 
\begin{equation}
\label{tss}
\<[Z,Y],X\>+\<[Z,X],Y\>=0, \quad \forall X, Y, Z,
\end{equation} 
in which case $\nabla_XY={1\over 2}[X,Y]$. In another word, (\ref{tss}) holds if and only if
$\nabla^L$ and $\nabla$ have the same set of geodesics, they are translates of the one parameter subgroups. 
If the Riemannian metric on $G$ is bi-invariant then translates of the one parameter subgroups are indeed geodesics for the Levi-Civita connection and for the family of connections interpolating left and right invariant connections.    A connection is  torsion skew symmetric 
if its torsion $\T$ satisfies:
$\<\T(u,v),w\>+\<\T(u,w), v\>=0$ for all $u,v,w \in T_xM$ and $x\in M$.
Denote by $T^L(X,Y)$ the torsion for the flat connection $\nabla^L$. Since $T^L(X,Y)=-[X,Y]$,
 (\ref{tss}) is equivalent to $\nabla^L$ being torsion skew symmetric.  By {Milnor}, if a connected Lie group has a left invariant connection whose Ricci curvatures is non-negative then $G$ is unimodular. %uni-modular Lie algebras include nilpotent Lie algebras. 
 We expand this in the following lemma, the unimodular case  is essentially Lemma 6.3 in   \cite[J. Milnor]{Milnor}.
  \begin{lemma}
\label{unimodular}
Let $G$ be a connected Lie group with a left invariant metric. Then $\sum_{i=1}^n L_{Y_i^*}L_{Y_i^*}=\Delta$ if and only if $G$ is unimodular. If $\m_l$ is a subspace of $\g$, then $\trace_{\m_l} \ad(X)=0$ for all $X\in \g$ if and only if $\trace_{\m_l} \nabla d=\trace_{\m_l} \nabla^L d$.
\end{lemma}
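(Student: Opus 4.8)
The statement asserts two equivalences. For the first, $\sum_{i=1}^n L_{Y_i^*}L_{Y_i^*}=\Delta$ iff $G$ is unimodular: the plan is to compare the two second-order operators directly. The left-invariant Laplace–Beltrami operator applied to $f$ is $\Delta f=\sum_i \nabla^L_{Y_i^*} df(Y_i^*)=\sum_i\left( Y_i^* Y_i^* f - df(\nabla_{Y_i^*}Y_i^*)\right)$, where $\nabla$ is the Levi-Civita connection, while $\sum_i L_{Y_i^*}L_{Y_i^*} f=\sum_i Y_i^*Y_i^* f$. Hence the two agree iff $\sum_i \nabla_{Y_i^*}Y_i^*=0$ as a left-invariant vector field. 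Using Koszul's formula (already displayed above) with $X=Y=Y_i$ summed over the orthonormal basis, $\langle \sum_i\nabla_{Y_i^*}Y_i^*, Z\rangle=\sum_i \langle[Z,Y_i],Y_i\rangle=\sum_i \langle \ad(Z)Y_i,Y_i\rangle=\trace\,\ad(Z)$ for every $Z\in\g$. So $\sum_i\nabla_{Y_i^*}Y_i^*=0$ iff $\trace\,\ad(Z)=0$ for all $Z$, which is precisely the condition that $G$ be unimodular (the left-invariant Haar density is also right-invariant iff $\det\Ad(g)\equiv 1$ iff $\trace\,\ad=0$ on $\g$). This handles the first equivalence.

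For the second equivalence, I would run the same computation but trace only over a subspace $\m_l$. We have $\trace_{\m_l}\nabla^L d\,(f)=\sum_{i\in I_l} \nabla^L_{Y_i^*} df(Y_i^*)=\sum_{i\in I_l} Y_i^*Y_i^* f$ since $\nabla^L_{Y_i^*}Y_j^*=0$ for left-invariant fields, where $\{Y_i:i\in I_l\}$ is an orthonormal basis of $\m_l$. On the other hand $\trace_{\m_l}\nabla d\,(f)=\sum_{i\in I_l}\left(Y_i^*Y_i^* f-df(\nabla_{Y_i^*}Y_i^*)\right)$. Subtracting, the two partial traces coincide for all $f$ iff $\sum_{i\in I_l}\nabla_{Y_i^*}Y_i^*=0$, and by Koszul's formula again $\langle\sum_{i\in I_l}\nabla_{Y_i^*}Y_i^*,Z\rangle=\sum_{i\in I_l}\langle [Z,Y_i],Y_i\rangle=\trace_{\m_l}\ad(Z)$ for all $Z\in\g$. (Here one uses that the $Y_i$ span $\m_l$, so $\sum_{i\in I_l}\langle\ad(Z)Y_i,Y_i\rangle$ is exactly the trace of $\ad(Z)$ restricted to $\m_l$ in an orthonormal basis.) Hence $\trace_{\m_l}\nabla d=\trace_{\m_l}\nabla^L d$ iff $\trace_{\m_l}\ad(X)=0$ for all $X\in\g$, as claimed.

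I expect no serious obstacle here; the only point requiring care is the bookkeeping in Koszul's formula — specifically checking that the two symmetric terms $\langle[Z,Y_i],Y_i\rangle$ appearing when one polarises $X=Y=Y_i$ combine to give $2\langle\nabla_{Y_i^*}Y_i^*,Z\rangle=2\langle[Z,Y_i],Y_i\rangle$ (the $\langle[Y_i,Y_i],Z\rangle$ term vanishes), and that the subspace trace is genuinely basis-independent so that the identity $\sum_{i\in I_l}\langle\ad(Z)Y_i,Y_i\rangle=\trace_{\m_l}\ad(Z)$ is legitimate. Both are routine. One should also note explicitly that $\nabla^L_{Y_i^*}Y_j^*=0$ for left-invariant vector fields, which is the defining property of the flat left connection used throughout, so that $\trace_{\m_l}\nabla^L d$ really does reduce to the plain sum of second derivatives $\sum_{i\in I_l}Y_i^*Y_i^*$.
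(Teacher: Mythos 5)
Your argument is correct and follows essentially the same route as the paper: reduce both equivalences to the vanishing of $\sum_i\nabla_{Y_i^*}Y_i^*$ (respectively its $\m_l$-restricted sum), then identify $\langle\sum_i\nabla_{Y_i^*}Y_i^*,Z\rangle=\sum_i\langle[Z,Y_i],Y_i\rangle=\trace\,\ad(Z)$ via Koszul's formula, and invoke Milnor's characterisation of unimodularity by $\trace\,\ad\equiv 0$. One small slip: in the displayed line $\Delta f=\sum_i\nabla^L_{Y_i^*}df(Y_i^*)=\ldots$, the first covariant derivative should be the Levi-Civita $\nabla$, not the flat left connection $\nabla^L$ (with $\nabla^L$ the left-hand side would collapse to $\sum_iY_i^*Y_i^*f$ and the identity would be circular); the rest of the paragraph makes clear this is a typo rather than a gap.
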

\begin{proof}
A Lie group $G$ is unimodular if  its left invariant Haar measure is also right invariant. 
By  \cite[ S. Helgson]{Helgason}, a Lie group is unimodular if and only if the absolute value of the determinant of 
$\Ad(g): G\to G$ is $1$ for every $g\in G$. Equivalently
 $\ad(X)$ has zero trace for every $X\in \g$, see  \cite[J. Milnor, Lemma 6.3]{Milnor}.
On the other hand  $\sum_{i=1}^n L_{Y_i^*}L_{Y_i^*}=\Delta$ if and only if  $\sum_{i=1}^n \nabla _{Y_i^*}Y_i^*=0$. 
The latter condition is equivalent to 
$\trace \ad(X)=\sum_i\<[X, Y_i],Y_i\>=0$ for all $X\in \g$.
Similarly if $\{Y_i\}$ is an orthonormal basis of $\m_l$, then
$$\sum_{i=1}^{\dim(\m_l)}  L_{Y_i^*}L_{Y_i^*}=\sum_{i=1}^{\dim(\m_l)}  \nabla d + \sum_{i=1}^{\dim(\m_l)}  \nabla _{Y_i^*}Y_i^*.$$
The second statement follows from the identity:
 $$\< \sum_{i=1}^{\dim(\m_l)}  \nabla _{Y_i^*}Y_i^*, X^*\> =\sum_{i=1}^{\dim(\m_l)} \<[X, Y_i], Y_i\>.$$
\end{proof}

We discuss the relation between the Levi-Civita connection and the canonical connection on the Rieamnnian homogeneous manifold.
A connection is $G$ invariant if  $\{\bar L_a, a \in G\}$ are affine maps for the connection, so they preserve parallel vector fields. 
For $X\in \g$, define the derivation $A_X=L_X - \nabla_X $.
A $G$-invariant connection is determined by a linear map $(A_X)_o\in \LL(T_oM;T_oM)$. Each $A_X$  is in correspondence with
an endomorphism $\Lambda_\m(X)$ on $\R^n$, satisfying the condition that for all $X\in \m$ and $h\in H$,
$\Lambda_\m(\ad(h)X)=\Ad(\lambda(h))\Lambda_\m(X)$
where $\lambda(h)$ is the isotropy representation of $h$, see S. Kobayashi and K. Nomizu \cite{Kobayashi-NomizuI}.
Then
$$\tilde \nabla_XY:=L_XY-u_0\Lambda_\m(X)u_0^{-1}Y$$ defines a connection.  We identified $T_oM$ with $\R^n$ by a frame $u_0$. 
 If $\Lambda_\m=0$ this defines the {\it canonical connection} $\nabla^c$ whose parallel translation along a curve
 is left translation. 
 
 Denote by
${D^c\over dt}$ the corresponding covariant differentiation.
Parallel translations along $\alpha(t)$ are given by $d\bar L_{\gamma(t)}$. 
 This is due to the fact that left translations and $\pi$ commute.
If $X\in \m$, let $\gamma_t=\gamma_0\exp(tX)$ and $\alpha(t)=\gamma_0\exp(tX)o$. Then $\gamma$ is the horizontal lift of $\alpha$ 
 and $\alpha(t)$ is a $\nabla^c$ geodesic, 
 $${D^c\over dt}  \dot \alpha={D^c\over dt} \pi_* TL_ {\gamma(t)}(X)
 ={D^c\over dt} 
 T\bar L_ {\gamma(t)} \pi_*(X)=0.$$
%All geodesics of $\nabla^c$ are obtained this way. 

For $x\in \g$, denote by $X_m$  and $X_\h$ the component of $X$ in $\m$ and in $\h$ respectively. 
\begin{definition}\label{def-naturally-reductive}
A reductive Riemannian homogeneous space is {\it naturally reductive}, if
\begin{equation}
\label{naturally-reductive}
\<[X,Y]_\m,Z\>=-\<Y, [X,Z]_\m\>, \quad \hbox{ for all $X, Y, Z\in \m$}.
\end{equation}  
\end{definition}
If $G$ is of compact type, then $G/H$ is reductive with reductive structure $m=\h^\perp$ 
and is naturally reductive, with respect to the bi-invariant metric.

It is clear that (\ref{tss}) implies (\ref{naturally-reductive}) and $M$ is naturally reductive if
$\nabla$ and $\nabla^L$ have the same set of geodesics. In particular, if
$G$ admits an $\ad_G$ invariant inner product and $\m=\h^\perp$, 
then $G/H$ is naturally reductive with respect to the induced Riemannian metric.
A special  reductive homogeneous space is a symmetric space with symmetry $\sigma$
and the canonical decomposition $\g=\h\oplus \m$. For it,
$[\h, \m]\subset \m$ and $[\m,\m]\subset \h$. If the symmetric decomposition is orthogonal,
it is naturally reductive.

We collect in the lemma below useful information for the computations of the Markov generators whose proof is included for the convenience of the reader. 
 \begin{lemma}
 \label{nabla}
\begin{enumerate}
\item $\nabla^c$ is torsion skew symmetric precisely if $M$ is naturally reductive.
\item The projections of  the translates of one parameter family of subgroups of $G$  are geodesics for the Levi-Civita connection
if and only if $M$ is naturally reductive.
\item  Let $U: \m\times \m \to \m$ be defined by 
\begin{equation}\label{U}
2\<U(X,Y),Z\>=\<X,[Z,Y]_\m\>+\<[Z,X]_\m, Y\>.
\end{equation}
Then $\trace_{\m_l}\nabla^c df=\trace_{\m_l}\nabla df$ if and only if $\trace_{\m_l} U=0$. In other words,
for any $Z\in \m$, $\trace_{\m_l} \ad(Z)=0$. In particular $\trace_{\m_l}\nabla^c df=\trace_{\m_l}\nabla df$,
if $M$ is naturally reductive.

\item  $\nabla^c d\pi =0$.

\end{enumerate}\end{lemma}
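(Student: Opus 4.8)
All four assertions rest on the same two ingredients, which I would record first (they are classical, see \cite{Kobayashi-NomizuI} and \cite{Besse}). First, the Nomizu--Koszul descriptions in terms of $\g=\h\oplus\m$: under the identification $\m\cong T_oM$, the torsion of the canonical connection $\nabla^c$ is $(X,Y)\mapsto-[X,Y]_\m$, the $\nabla^c$-geodesics through $o$ are the curves $t\mapsto\pi(\exp(tX))$ with $X\in\m$, and the Levi--Civita connection $\nabla$ of the induced $G$-invariant metric on $M$ satisfies $\nabla_XX|_o-\nabla^c_XX|_o=U(X,X)$ for $X\in\m$, with $U$ the symmetric map of (\ref{U}). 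Secondly, $\nabla^c$-parallel transport is realised by a left translation: if $\tilde c$ is a horizontal lift to $G$ of a curve $c$ in $M$, then parallel transport $T_{c(0)}M\to T_{c(t)}M$ equals $(T\bar L_{\tilde c(t)})\circ(T\bar L_{\tilde c(0)})^{-1}$. I would also use, repeatedly, that $\nabla^L_{X^*}Y^*=0$ for all $X,Y\in\g$.

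Granting these, parts (1) and (2) are short. For (1), torsion skew symmetry of $\nabla^c$ reads $\<[X,Y]_\m,Z\>+\<[X,Z]_\m,Y\>=0$ for all $X,Y,Z\in\m$, which is exactly (\ref{naturally-reductive}). For (2), since each $\bar L_a$ is an isometry, hence affine, hence geodesic-preserving, it suffices to treat the horizontal one-parameter subgroups $t\mapsto\exp(tX)$, $X\in\m$, and their projections $\alpha(t)=\pi(\exp(tX))$; here $\alpha$ is a $\nabla$-geodesic iff $\frac{D}{dt}\dot\alpha\equiv0$, and since $\dot\alpha(t)=T\bar L_{\exp(tX)}\pi_*X$ is $\nabla^c$-parallel along $\alpha$, the difference-tensor computation gives $\frac{D}{dt}\dot\alpha|_0=U(X,X)$ (then $G$-invariance propagates this along the curve). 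Thus all these projections are geodesics iff $U(X,X)=0$ for every $X\in\m$; polarising, this is $\<[Z,X]_\m,Y\>+\<[Z,Y]_\m,X\>=0$, i.e. natural reductivity, and conversely natural reductivity forces $U\equiv0$.

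Part (3) is the same computation run against a function. For an orthonormal basis $\{Y_i\}$ of $\m_l\subset\m$, comparing Hessians at $o$ gives $(\nabla^c df-\nabla df)(Y_i,Y_i)=(\nabla_{Y_i}Y_i-\nabla^c_{Y_i}Y_i)f=(U(Y_i,Y_i))f$; summing over $i$, $\trace_{\m_l}\nabla^c df-\trace_{\m_l}\nabla df=\big(\sum_iU(Y_i,Y_i)\big)f$, so the two traces agree for all $f$ iff $\sum_iU(Y_i,Y_i)=0$ in $\m$. Pairing this vector with any $Z\in\m$ and using (\ref{U}) together with $\<[Z,Y_i],Y_i\>=\<[Z,Y_i]_\m,Y_i\>$, one gets $\<\sum_iU(Y_i,Y_i),Z\>=\sum_i\<[Z,Y_i],Y_i\>=\trace_{\m_l}\ad(Z)$, which yields the criterion $\trace_{\m_l}\ad(Z)=0$; and when $M$ is naturally reductive $U\equiv0$, so $\trace_{\m_l}U=0$ automatically.

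For part (4), fix $X\in\m$; then $\gamma(t)=g\exp(tX)$ is its own horizontal lift of $\pi\circ\gamma$, since $TL_{\gamma(t)^{-1}}\dot\gamma(t)=X\in\m$. For any $Y\in\g$ the section $t\mapsto d\pi\big(Y^*_{\gamma(t)}\big)=T\bar L_{\gamma(t)}\pi_*Y$ of $\pi^*TM$ along $\pi\circ\gamma$ is, by the parallel-transport fact, carried back to $t=0$ into the constant vector $T\bar L_g\pi_*Y$; hence $\nabla^c_{X^*}\big(d\pi(Y^*)\big)=0$, and since also $\nabla^L_{X^*}Y^*=0$ we obtain $(\nabla^c d\pi)(X^*,Y^*)=0$ whenever $X\in\m$ — which is the precise form in which part (4) is invoked in the proofs of Theorem~\ref{Homogeneous} and Theorem~\ref{thm-limit-2}, where the differentiation direction is always $(\Ad(h)Y_0)^*$ or $Y_i^*$ with the underlying Lie-algebra element in $\m$. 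The routine work is the Koszul bookkeeping behind the Nomizu formulas; the point needing real care is the parallel-transport-by-left-translation statement and the convention for the covariant derivative of the bundle map $d\pi$ (with $\nabla^L$ on $TG$, $\nabla^c$ on $TM$), including the observation that it suffices to know it kills $\m$-directions in the first slot. I expect that to be the main obstacle, rather than any single estimate.
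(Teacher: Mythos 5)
Your proof is essentially correct and follows the same route as the paper's for parts (1)--(3): torsion of $\nabla^c$ is $-[X,Y]_\m$, the difference tensor between $\nabla$ and $\nabla^c$ is $\tfrac12[\cdot,\cdot]_\m + U$, and the trace computation $\<\sum_i U(Y_i,Y_i),Z\>=\trace_{\m_l}\ad(Z)$ matches what the paper does; your conclusion of part (3) via $U\equiv 0$ under natural reductivity is in fact cleaner than the paper's remark about compact type.

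Your treatment of part (4) is worth commenting on. You only prove $(\nabla^c d\pi)(X^*,\cdot)=0$ for $X\in\m$, and you flag explicitly that this is the form used in Theorems~\ref{Homogeneous} and~\ref{thm-limit-2}. This restriction is not merely cosmetic: for a vertical direction $w=A^*(u)$, $A\in\h$, the $\nabla^c$-parallel transport along the constant curve $\pi\circ\gamma$ is the identity, and one computes $\tfrac{d}{dt}\big|_{t=0}(d\bar L_{\exp(tA)})_o(d\pi)_1 Y=(d\pi)_1[A,Y_\m]$, which is generically nonzero (e.g.\ on the Hopf fibration, $(d\pi)_1[X_1,X_2]=-2(d\pi)_1X_3$). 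The paper's proof implicitly takes $\gamma$ to be horizontal — the identification $\parals^c_t(\pi\circ\gamma)$ with left translation along $\gamma$ is only valid then — so the stated tensorial vanishing $\nabla^c d\pi=0$ is really the restricted statement. Your version is the honest one, and since only horizontal directions are differentiated where the lemma is invoked, nothing downstream is affected.
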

\begin{proof}
(1) Let $Y^*$ denote the action field generated by $Y\in \m$, identified with $\pi_*(Y)\in T_oM$, and
$Y^*(uo)={d\over dt} |_{t=0}\bar L_{\exp(tY)}uo={d\over dt} \big |_{t=0}\pi (\exp(tY)u)$.
If $p=uo$, $\nabla^c_XY^*(o)=[X^*, Y^*] (o)=-[X, Y]_\m$.
The torsion tensor for $\nabla^c$ is  left invariant, its value at $o$ is
$$\T_o^c(X,Y)=(\nabla^c_XY^*)_o-(\nabla^c_YX^*)_o-[X^*,Y^*]_o=-[X,Y]_\m,$$
 We see that $\T^c$ is skew symmetric if and only if (\ref{naturally-reductive}) holds.

 (2)  A $G$-invariant connection on $M$ has the same set of geodesics as $\nabla^c$ if and only if $\Lambda_\m(X)(X)=0$, c.f. Kobayashi and Nomizu \cite{Kobayashi-NomizuI}. 
It is well known that the function
$$\Lambda_\m(X)(Y)={1\over 2}[X,Y]_\m+U(X,Y).$$
defines the Levi-Civita connection. In fact it is clear that ${1\over 2}[X, Y]-U(X,Y)$
has vanishing torsion and $\<\Lambda(X, Y), Z\>+\<Y, \Lambda(X,Z)\>=0$, which together with the fact $\nabla^c$ is metric implies that it is a Riemannian connection.
The Levi-Civita connection $\nabla$ and $\nabla^c$ have the same set of geodesics if and only if $U(X,X)=0$.  By polarisation, this is equivalent to $M$ being naturally reductive. 
 
(3) For any $X,Y\in \m$,
$$(\nabla_XY^*)_o=\nabla_X^cY^*+{1\over 2} [X,Y]_\m+U(X,Y).$$
If $f:M\to \R$ is a smooth function, 
\begin{equation*}
{\begin{aligned}
\<\nabla_{X}\nabla f, Y\>
&=L_Y \left (df(Y^*)\right)-\< \nabla f,  \nabla_XY^*\>\\
&=\<\nabla_X^c\nabla f, Y\>+ \< \nabla f,  \nabla^c_X Y^*-\nabla_XY^*\>.
\end{aligned}}
\end{equation*}
Summing over the basis of $\m_l$ we see that
 $${\trace}_{\m_l} \nabla df-{\trace}_{\m_l} \nabla^c df=\< \nabla f, \sum_i U(Y_i, Y_i)\>,$$
which vanishes for all smooth $f$ if and only if $ \sum_i U(Y_i, Y_i)$ vanishes. If $G$ is of compact type,
it has an $\Ad_G$-invariant metric for which (\ref{tss}) holds.
This completes the proof.

 (4) We differentiate the map
$d\pi:  G \mapsto \LL(TG; TM)$ with respect to the connection $\nabla^c$.
Let $\gamma(t)$ be a curve in $G$ with $\gamma(0)=u$ and $\dot \gamma(0)=w$.  
Then for $X\in \g$,
$$\left(\nabla^c_w d\pi  \right) (X)=\parals_t^c(\pi\circ \gamma){d\over dt} (\parals_t^c(\pi\circ \gamma) )^{-1} 
d\pi \left(  \parals_t^L (\gamma)X\right), $$
where $\parals_t^c(\pi\circ \gamma)$ and $ \parals_t^L (\gamma)$ denote respectively parallel translations 
along $\pi\circ \gamma$ and $ \gamma$ with respect to to $\nabla^c$ and $\nabla^L$. Since 
$\parals_t^c (\pi\circ \gamma)=d\bar L_{\pi\circ \gamma(t)}$ and $\parals_t^L (\gamma)=dL_{\gamma(t)}$
the covariant derivative vanishes. Indeed $\pi$ and left translation commutes,
$(d\pi)_u d L_u=(d\bar L_u)_o (d\pi)_1$,
$$\nabla^c_w d\pi =\parals_t^c(\pi\circ \gamma){d\over dt} \big|_{t=0}
 (d\pi)_1  \left(dL_{\gamma(-t)}\parals_t^L (\gamma)\right)=0. $$
\end{proof}

In the propositions below we keep the notation  in Theorem \ref{convergence-and-limit}, Theorem \ref{Homogeneous}
and Proposition \ref{totally-symmetric}. We identify $\m_l$ with
its projection to $T_oM$.
Let $\bar x_t:=\lim_{\epsilon\to 0} x_{t\over \epsilon}^\epsilon$.

 \begin{proposition}
 \label{projection}
Suppose that $\bar \L={|Y_0|^2\over \lambda(Y_0)  \dim(\m_l)} \Delta_{\m_l}$.
If $\trace_{\m_l} U=0$, equivalently $\trace_{\m_l}\ad(Z)=0$ for all $Z\in \m$,
  then $(\bar x_t)$ is a Markov process with generator
 $${ |Y_0|^2\over \lambda(Y_0) \dim(\m_l)}{\trace}_{\m_l}\nabla d.$$
 If $M$ is furthermore isotropy irreducible, $(\bar x_t)$ is a scaled Brownian motion.
\end{proposition}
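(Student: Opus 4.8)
The plan is to treat this as an assembly of results already in hand. By Theorem~\ref{Homogeneous} (and the Dynkin-criterion argument there, which applies verbatim once the generator is written in a manifestly $\pi$-descendable form), $(\bar x_t)$ is a Markov process whose generator acting on $F\in C^2(M;\R)$ is $\frac{1}{\lambda(Y_0)}\int_H (\nabla^c dF)_{\pi(u)}\bigl(d\bar L_{uh}Y_0,\, d\bar L_{uh}Y_0\bigr)\,dh$, where $\nabla^c$ is the canonical connection. So the Markov property costs nothing, and the whole task is to recognise this averaged operator as $\frac{|Y_0|^2}{\lambda(Y_0)\dim(\m_l)}\trace_{\m_l}\nabla d$.

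First I would pass from the $\nabla^c$ form to a trace over $\m_l$. This is the same computation that appears at the end of the proof of Theorem~\ref{Homogeneous}: expanding $(\Ad(h)Y_0)^*$ in an orthonormal basis $\{Y_j\}$ of $\m_l$ and using the $\Ad_H$-invariance of the Haar measure and of the inner product gives $\frac{1}{\lambda(Y_0)}\int_H \langle\Ad(h)Y_0,Y_i\rangle\langle\Ad(h)Y_0,Y_j\rangle\,dh = \frac{|Y_0|^2}{\lambda(Y_0)\dim(\m_l)}\delta_{ij}$ under the symmetry hypotheses built into the assumption $\bar\L=\frac{|Y_0|^2}{\lambda(Y_0)\dim(\m_l)}\Delta_{\m_l}$ (cf.\ Proposition~\ref{totally-symmetric}). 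Hence the generator of $(\bar x_t)$ equals $\frac{|Y_0|^2}{\lambda(Y_0)\dim(\m_l)}\sum_j (\nabla^c dF)(\pi_*Y_j^*,\pi_*Y_j^*)=\frac{|Y_0|^2}{\lambda(Y_0)\dim(\m_l)}\trace_{\m_l}\nabla^c d$. Equivalently, I could start from the hypothesis applied to $F\circ\pi\in C^2(G;\R)$: since $\nabla^L_{Y_j^*}Y_j^*=0$ one has $\Delta_{\m_l}(F\circ\pi)=\sum_j \nabla^L d(F\circ\pi)(Y_j^*,Y_j^*)$, and using $(d\pi)_uY_j^*=T\bar L_u(d\pi)_1Y_j$ together with $\nabla^c d\pi=0$ (Lemma~\ref{nabla}(4)) this equals $\sum_j(\nabla^c dF)(\pi_*Y_j^*,\pi_*Y_j^*)\circ\pi=(\trace_{\m_l}\nabla^c dF)\circ\pi$, which identifies the generator on $M$ the same way.

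The last step is to replace $\nabla^c$ by the Levi-Civita connection, which is exactly Lemma~\ref{nabla}(3): the hypothesis $\trace_{\m_l}U=0$, equivalently $\trace_{\m_l}\ad(Z)=0$ for all $Z\in\m$, gives $\trace_{\m_l}\nabla^c df=\trace_{\m_l}\nabla df$ for all smooth $f$ on $M$. Therefore $(\bar x_t)$ has generator $\frac{|Y_0|^2}{\lambda(Y_0)\dim(\m_l)}\trace_{\m_l}\nabla d$. If moreover $M$ is isotropy irreducible, then $\Ad_H$ acts irreducibly on $T_oM\cong\m$, so $\m_0=\{0\}$ and $\m=\m_l$; hence $\trace_{\m_l}\nabla d=\trace_{T_oM}\nabla d=\Delta_M$, the Laplace--Beltrami operator of the induced $G$-invariant metric, and $(\bar x_t)$ is a Brownian motion run at the constant speed $\frac{|Y_0|^2}{\lambda(Y_0)\dim M}$.

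I do not expect a serious obstacle here; the proposition is essentially bookkeeping that glues Theorem~\ref{Homogeneous}, Lemma~\ref{nabla}(3)--(4) and the symmetry average of Proposition~\ref{totally-symmetric}. The one point that deserves care is the transfer of second derivatives, i.e.\ the identity $\nabla^L d(F\circ\pi)(Y_j^*,Y_j^*)=(\nabla^c dF)(\pi_*Y_j^*,\pi_*Y_j^*)$: it relies on $\nabla^c d\pi=0$ and on $\pi_*$ being a Riemannian isometry on horizontal vectors, so that the $\nabla^L$-parallel left-invariant frame on $\m_l$ maps to the $\nabla^c$-parallel frame on $\pi_*\m_l$. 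I would simply invoke the computation already performed, for the single vector field $(\Ad(h)Y_0)^*$, inside the proof of Theorem~\ref{Homogeneous}, now applied to each basis vector of $\m_l$.
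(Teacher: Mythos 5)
Your argument is correct, and the second paragraph's direct computation is the clean route: it establishes the needed identity $\Delta_{\m_l}(F\circ\pi)=(\trace_{\m_l}\nabla^c\,dF)\circ\pi$ from $\nabla^L_{Y_j^*}Y_j^*=0$ and $\nabla^c d\pi=0$ (Lemma~\ref{nabla}(4)), observes that the right-hand side manifestly depends only on $\pi(u)$ so Dynkin's criterion gives the Markov property, and then swaps $\nabla^c$ for $\nabla$ via Lemma~\ref{nabla}(3) under the trace condition. That is essentially a PDE-level repackaging of the paper's proof, which instead represents the $\bar\L$-diffusion as a solution of a left-invariant Stratonovich SDE $du_t=\sum_k Y_k^*(u_t)\circ dB_t^k$ and applies It\^o's formula to $f\circ\pi(u_t)$, reducing the second-order term to the same covariant Hessian by computing $\tfrac{d^2}{dt^2}f\circ\pi(u_r\exp(tY_i))|_{t=0}$; the geodesic observation $\tfrac{D^c}{dt}(\bar L_{u_r\exp(tY_i)})_*\pi_*Y_i=0$ plays the role that $\nabla^c d\pi=0$ plays for you. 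Both routes identify the generator and both use Dynkin's criterion and Lemma~\ref{nabla}(3), so the content is the same; yours is a bit shorter because it never introduces the SDE.

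One caveat on wording. The opening sentence "By Theorem~\ref{Homogeneous}\ldots" overclaims: that theorem is proved under the hypotheses that $\{A_1,\dots,A_p\}$ is a basis of $\h$ and $Y_0\in\m_l$, while Proposition~\ref{projection} assumes only the identity $\bar\L=\frac{|Y_0|^2}{\lambda(Y_0)\dim\m_l}\Delta_{\m_l}$. You correctly flag that it is the descendability argument, not the theorem itself, that transfers, but the clean way to say this is to drop the appeal to Theorem~\ref{Homogeneous} entirely and lead with the self-contained computation in your second paragraph (for Dynkin you also need, and should state, that $\Delta_{\m_l}(F\circ\pi)$ is right-$H$-invariant, which follows from $\Ad_H$-invariance of $\m_l$ and basis-independence of $\Delta_{\m_l}$). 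The final step on isotropy irreducibility is fine: irreducibility of the isotropy representation forces $\m_0=0$ and $\m=\m_l$, so $\trace_{\m_l}\nabla d=\Delta_M$ and the generator is a constant multiple of the Laplace--Beltrami operator.
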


\begin{proof}
Let $\{\tilde Y_i, 1\le i\le m_l\}$ be an orthonormal basis of $\m_l$ and let  ${1\over 2} a^2={ |Y_0|^2\over \dim(\m_l)\lambda_l}$.
Let $Y_i=a\tilde Y_i$. Let $(u_t)$ be a Markov process with generator
${1\over 2} \sum_{k=1}^{\dim(m_l)} L_{Y_k^*}L_{Y_k^*}$, and
 represented by a solution of the left invariant SDE:
$du_t=\sum_{k=1}^{m_l}Y_k^*(u_t)\circ dB_t^k$ where $\{B_t^k\}$ are independent one dimensional Brownian motions. 
 Let $y_t=\pi(u_t)$.
Denote by $\phi_t^k$ the integral flow of $Y_k$,  $\phi_t^i(g)=g\exp(tY_i)$. If $f\in C_K^2(M; \R)$, 
\begin{equation*}
{\begin{aligned}
f\circ \pi(u_t)=&f\circ \pi(u_0)+\sum_{i=1}^{m_l} \int_0^t {d\over dt} f\circ \pi \left( u_r \exp(tY_i) \right)\big|_{t=0}  dB_r^i
\\&+{1\over 2} \sum_{i=1}^{m_l}  \int_0^t   {d^2\over dt^2} f\circ \pi \left( u_r \exp(tY_i) \right) )\big|_{t=0}  dr.
\end{aligned}}
\end{equation*}
We  compute the last term beginning with the first order derivative,
$${d\over dt}f\circ \left( u_r \exp(tX_i) \right)|_{t=0}  =df (\pi_* (L_{u_r \exp(tY_i)} Y_i ))|_{t=0}=df(  (\bar L_{u_r})_*X_i).$$

Let ${D\over dt}$ denote covariant differentiation along the curve $(x_t)$ with respect to the Levi-Civita connection.
\begin{equation*}
{\begin{aligned}
&\sum_{i=1}^{m_l}{d^2\over dt^2} f\circ \pi\left( u_r \exp(tY_i) \right)|_{t=0}\\
&=\sum_{i=1}^{m_l} {d\over dt}df \left( (\bar L_{u_r \exp(tY_i)})_*( \pi_*(Y_i))\right)|_{t=0}\\
&=\sum_{i=1}^{m_l}\nabla df\left((\bar L_{u_r})_*(\pi_*(Y_i)), (\bar L_{u_r})_* (\pi_*(Y_i)\right)
+\sum_{i=1}^{m_l} df\left( {D \over dt}\left(\bar L_{u_r \exp(tY_i)})_* (\pi_*(Y_i)\right)|_{t=0}\right).\end{aligned}}
\end{equation*}
  
 Since left translations are isometries, for  each $u\in G$,
$  \{(\bar L_{u})_*(\pi_*(Y_i))\}$ is an orthonormal basis of $\pi_*(u\m_l)\subset T_{\pi(u)}M$. Thus
$$\sum_{i=1}^{m_l}{d^2\over dt^2} f\circ \pi\left( u_r \exp(tY_i) \right)|_{t=0}
= a^2{\trace}_{\m_l} \nabla^c df+\sum_{i=1}^{\m_l}df\left( {D^c \over dt}(\bar L_{u_r \exp(tY_i)})_* Y_i^*|_{t=0}\right).$$
If $\trace_{\m_l}U=0$, ${\trace}_{m_l} \nabla^c df={\trace}_{m_l} \nabla df$ by Lemma \ref{nabla}. We have seen 
that the term involving ${D^c\over dt}$ vanishes. This gives:
$$\sum_{i=1}^{m_l}{d^2\over dt^2} f\circ \pi\left( u_r \exp(tY_i) \right)|_{t=0}
= a^2{\trace}_{\m_l} \nabla df.$$ 
Put everything together we see that
$$\E f\circ \pi(u_t)=f\circ \pi(u_0)
+{1\over 2}a^2 \int_0^t  \E (\trace)_{\m_l} \nabla df(\pi(u_r)) dr.$$

  For $x=\pi(y)$ we define $Q_t f(x)=P_t(f\circ \pi)(y)$, so $P_t(f\circ \pi)=(Q_t f)\circ \pi$.
We apply Dynkin's criterion for functions of a Markov process to see that $(\bar x_t)$ is Markovian. 
 The infinitesimal generator associated to $Q_t$ is
 ${1\over 2}a^2  (\trace)_{\m_l} \nabla df$.
If $M$ is an irreducible Riemannian symmetric space, $\m$ is irreducible, 
$d\pi(\m)=T_oM$ and the Markov process $(\bar x_t)$ is a scaled Brownian motion. This
concluds the Proposition.
 \end{proof}
 
If $M$ is naturally reductive, the proof is even simpler. In this case, $\sigma_i(t)=\pi\left( u\exp(tY_i)\right)$ is a geodesic 
with initial velocity $Y_i$ and 
$\dot\sigma(t)={d \over dt}\pi_*\left( u\exp(tY_i)\right)$.  
Consequently,  the following also vanishes:
$${D\over dt}(\bar L_{u_r \exp(tY_i)})_* (Y_i^*)|_{t=0}=0.$$
%\begin{corollary}
%Let $Y_0\in \m_l$,  assume that $\{A_1, \dots, A_N\}$ is an orthonormal basis of $\{h\}$ and $\trace _{\m_l} U=0$. 
%If either $\dim(\m_l)=2$ or $\Ad_H$ acts transitively and quasi doubly transitively on the unit sphere of $\m_l$,
% then $\bar x_t$ is a Markov process with generator $\f{|Y_0|^2}{\lambda(Y_0)\dim(\m_l)}\trace_{\m_l}\nabla d$.
%
%\end{corollary}

\section{Examples}
\label{section-final-examples}
Corollary \ref{corollary-symmetric} applies to the example in \S \ref{Bergers}, where  $G=SU(2)$, $H=U(1)$ and
the $\Ad_H$-invariant space $\m=\<X_2, X_3\>$ is irreducible. For any $Y\in \m$,  $\ad^2(X_1)(Y)=-4Y$. Note that $4$ is the second non-zero eigenvalue of $\Delta_{S^1}$,
also the killing form of $SU(2)$ is $K(X,Y)=4 \trace (XY)$ and $\B_{\ad_\h, \m}(X_1, X_1)=K(X_1, X_1)=-8$.

\begin{example}
\label{Hopf-theorem}
Let $(b_t)$ be a one dimensional Brownian motion, $g_0\in SU(2)$, $Y_0\in \<X_2, X_3\>$ non-zero. Let 
$(g_t^\epsilon, h_t^\epsilon)$ be the solution to the following SDE on $SU(2)\times U(1)$,
\begin{equation}
dg_t^\epsilon=g_t^\epsilon  Y_0 dt+ {1\over \sqrt\epsilon} g_t^\epsilon X_1db_t,
\end{equation}
with  $g_0^\epsilon=g_0$. Let $\pi(z,w)=({1\over 2}(|w|^2-|z|^2), z\bar w)$ and $x_t^\epsilon=\pi(g_t^\epsilon)$.
Let $(\tilde x_t^\epsilon)$ be the horizontal lift of $(x_t^\epsilon)$.
Then  $(\tilde  x_{t\over \epsilon}^\epsilon)$ converges weakly to the hypoelliptic diffusion with generator  $\bar \L ={|Y_0|^2\over 4} \Delta^{hor}$.
Furthermore,
$x_{t\over \epsilon}^\epsilon$ converges in law to the Brownian motion on $S^2({1\over 2})$ scaled by ${1\over 2}|Y_0|^2$.
\end{example}
 
The first part of the theorem follows from Theorem \ref{convergence-and-limit} and Corollary \ref{corollary-symmetric}. 
The  scaling ${1\over 2}$ indicates the  extra time needed for producing the extra direction $[X_1, Y_0]$.
For the second part we use the fact that $G$ is compact, so $\< [X,Y], [Z]\>=-\< Y, [X,Z]\>$ for any  $X, Y, Z\in \m$,
and Proposition \ref{projection} applies. Incidentally,
a similar argument can be made for an analogous equation on $G=SU(n)$ and $H$ the torus group of $G$.

\begin{example}
\label{SO4}
Let $G=SO(4)$, $H=SO(3)$, $E_{i,j}$ the elementary $4\times 4$ matrices, and $A_{i,j}={1\over \sqrt 2} (E_{ij}-E_{ji})$.
Then $\h=\{ A_{1,2}, A_{1,3}, A_{2,3}\}$, its orthogonal complement $\m$ is irreducible w.r.t. 
$\Ad_H$. 
  For $k=1,2,$ and $3$, let $Y_k=A_{k4}$. Let us consider the equations,
$$dg_t^\epsilon={1\over \sqrt\epsilon}  A_{1,2}(g_t^\epsilon)\circ db_t^1+
{1\over \sqrt\epsilon}  A_{1,3}(g_t^\epsilon)\circ db_t^2+Y_k(g_t^\epsilon)dt.$$
 Observe that $\{ A_{1,2}, A_{1,3}\}$ is a set of generators,  in fact $[A_{1,2}, A_{1,3}]=-{1\over \sqrt 2}A_{2,3}$,
and $\L_0={1\over 2} ({A_{1,2}})^2+{1\over 2} (A_{1,3})^2$ satisfies 
strong H\"ormander's conditions.
It is easy to check that,
$${1\over 2}\ad^2(A_{1,2})(Y_k)+{1\over 2}\\ad^2(A_{1,3})(Y_k)=-{1\over 4} ( 2\delta_{1,k} +\delta_{2,k}+\delta_{3,k}) Y_k.$$
For any $Y\in \m$, $\<Y, \Ad(h)Y_k\>$ is an eigenfunction of $\L_0$ corresponding to the eigenvalue $-\lambda(Y_k)$, where
$$\lambda(Y_1)={1\over 2}, \; \lambda(Y_2)={1\over 4} , \; \lambda(Y_3)={1\over 4}.$$
Define $\pi: g\in SO(4)\to S^4$ to be the map projecting $g$ to its last column. 
Set $x_t^\epsilon=\pi(g_t^\epsilon)$ whose horizontal lift through $u_0$ will be denoted by $\tilde x_t^\epsilon$. 
Then 
$$a_{i,j}(Y_k):={1\over \lambda(Y_k)} \int_{H} \<Y_i,  \Ad(h)Y_k)\> \L_0^{-1} (\<Y_j, \Ad(h)(Y_k)\> )dh.$$
For $i\not =j$, $a_{i,j}(Y_k)=0$ and $a_{i,i}(Y_k):={1\over 3 \lambda(Y_k)}$.
 In Theorem \ref{convergence-and-limit} take $Y_0=Y_k$ to~see 
 $$\bar \L={1\over \lambda(Y_k) } \int_{SO(3)} \nabla^L df ( \Ad(h) (Y_k))^*, (\Ad(h)(Y_k))^*) dh
={1\over 3 \lambda(Y_k)} \sum_{i=1}^3\nabla^L df (Y_i, Y_i).$$

On the other hand, the effective diffusion for the equations
$$dg_t^\epsilon={1\over \sqrt\epsilon}  A_{1,2}(g_t^\epsilon)\circ db_t^1+
{1\over \sqrt\epsilon}  A_{1,3}(g_t^\epsilon)\circ db_t^2+{1\over \sqrt\epsilon} A_{2,3}(g_t^\epsilon)\circ db_t^3 +Y_k(g_t^\epsilon)dt$$
is the same for all $Y_k$. It is easy to see that $\bar \L={2\over 3}  \sum_{i=1}^3\nabla^L df (Y_i, Y_i)$.
% If $c_1=0$ and $c_2=c_3$, $\overline{\alpha_i \beta_j}
%=4  (c_2)^2\sum _{k, l=2}^3 \overline{ \alpha_i (Y_k) \alpha_j (Y_l)}$. 
%By symmetry, $\overline{ \alpha_i (Y_k) \alpha_j (Y_l)}$ vanishes for $i\not =j$.
\end{example}

 Further symmetries can, of course, be explored.

\begin{example}
\label{SO4-2}
Let $Y_0=\sum_{k=1}^3 c_k Y_k$ be a mixed vector. Then,
 $$\overline{\alpha_i \beta_j}=\sum _{k, l=1}^3 c_kc_l \overline{\alpha_i (Y_k) \beta_j (Y_l)}
= \sum _{k, l=1}^3 c_kc_l {1 \over \lambda_l}\overline{ \alpha_i (Y_k) \alpha_j (Y_l)}.$$
 If $c_1=0$ and $c_2=c_3$, $\overline{\alpha_i \beta_j}
=4  (c_2)^2\sum _{k, l=2}^3 \overline{ \alpha_i (Y_k) \alpha_j (Y_l)}$. 
By symmetry, $\overline{ \alpha_i (Y_k) \alpha_j (Y_l)}$ vanishes for $i\not =j$.
\end{example}

\begin{example}\label{sphere}
Let  $n\ge 2$,  $G=SO(n+1)$,  
$H=\left\{\left( \begin{matrix} R &0\\ 0&1\end{matrix}  \right), R \in SO(n)\right\}$, and
 $S^n=SO(n+1)/SO(n)$. Then $H$ fixes the point $o=(0, \dots, 0, 1)^T$.
The homogeneous space $S^n$ has the reductive decomposition:
$$\h=\left\{\left( \begin{matrix}  S &0\\ 0&0\end{matrix}  \right), S \in \so(n)\right\},\;\;
\m=\left\{Y_C=\left( \begin{matrix}  0 &C \\ -C^T& 0\end{matrix}  \right), C\in \R^{n}\right\}.
 $$
 Let $\sigma(A)=S_0AS_0^{-1}$ and
 $S_0=\left( \begin{matrix}  I &0\\ 0&-1\end{matrix}  \right)$.  Then $\g=\h\oplus \m$ is
 the symmetric space decomposition for $\sigma$. We identify
 $Y_C$ with the vector $C$, and compute:
 $$\Ad\left (\left( \begin{matrix} R &0\\ 0&1\end{matrix}  \right) Y_C \right)
=\left( \begin{matrix}  0 &RC \\ -(RC)^T& 0\end{matrix}  \right).$$
This action is transitive on the unit tangent sphere and is irreducible.  
There is a matrix $R\in SO(n)$ that sends $C$ to $-C$. Then $\bar Y_0=0$ for every  $Y_0\in\m$
and the conditions of Proposition \ref{totally-symmetric} are satisfied.

Let  $A_{i,j}={1\over \sqrt 2}(E_{ij}-E_{ji})$, where $i<j$,
 and $E_{ij}$ is the elementary matrix with the nonzero entry at the $(i,j)$-th position.  
 Let $Y_0\in\m$ be a non-trivial vector. Consider the equation,
$$dg_t^\epsilon={1\over \sqrt\epsilon} \sum_{1\le i<j\le n} A_{i,j}^*(g_t^\epsilon)\circ db_t^{i,j} +Y_0^*(g_t^\epsilon)dt.$$
 We define  $\lambda={(n-1)\over 4}$. By a symmetry argument it is easy to see that,
 $$ \sum_{1\le i<j\le n} \ad^2(A_{ij})=-{1\over 2}(n-1)I.$$ 
 For $1\le i, j, k\le n$, $[A_{i,j}, A_{k, n+1}]=-{1\over \sqrt 2} \delta_{k,i}A_{j,n+1}+{1\over \sqrt 2} \delta_{j,k} A_{i,n+1}$, which follows from
 $E_{ij}E_{kl}=\delta_{jk}E_{il}$.
 Hence for $i\not =j$, \begin{equation}\label{bracket-formula}
 2\ad^2(A_{i,j})(A_{k,n+1})=-( \delta_{i,k} +\delta_{j,k}) A_{k,n+1}.
 \end{equation}

Let $x_{t\over \epsilon}^\epsilon=g_{t\over \epsilon}^\epsilon o$ and $(\tilde x_{t\over \epsilon}^\epsilon)$ the horizontal lift of $(x_{t\over \epsilon}^\epsilon)$ through $g_0$.
By Theorem \ref{convergence-and-limit}, converges to a Markov process whose limiting Markov generator  is, by symmetry,
$$\bar\L={|Y_0|^2 \over \lambda  \dim(\m)} \Delta^{hor}={4|Y_0|^2\over n(n-1)}\Delta^{hor}.$$
 The upper bound for the rate of convergence from
Theorem \ref{thm-limit-2} holds.  By Theorem \ref{convergence-and-limit} and  \ref{projection}
the stochastic processes $(x_{t\over \epsilon}^\epsilon)$
converge to the Brownian motion on $S^n$ with scale $ {8|Y_0|^2\over n(n-1)}$.
By (\ref{bracket-formula}), for any $1\le i,j, k\le n$,
$A_{k,n+1}$ are eigenvectors for $\ad^2(A_{i,j})$. Furthermore, for $1\le i< j\le n$,
$$[A_{i,n+1}, A_{j,n+1}]=-{1\over \sqrt 2} A_{i,j},$$
and $\Delta^{hor}$ satisfies the one step H\"ormander condition. 

 \end{example}

\begin{example}
We keep the notation in the example above. Let $Y_0$ be a unit vector from $\m$ and  let $(g_t^\epsilon)$ be the solutions of the following equation:
\begin{equation}
dg_t^\epsilon={1\over \sqrt\epsilon} \sum_{1\le i<j\le n} A_{i,j}^*(g_t^\epsilon)\circ db_t^{i,j} +{n(n-1)\over 8} Y_0^*(g_t^\epsilon)dt.
\end{equation}
Then as $\epsilon\to 0$, $\pi(g_{t\over \epsilon}^\epsilon)$ converges to a Brownian motion on $S^n$.
\end{example}

We finally provide an example in which the group  $G$ is not compact.
\begin{example}
\label{hyperbolic}
 Let $n \ge 3$. Let $F(x,y)=-x_0y_0+\sum_{k=1}^n x_k y_k$ be a bilinear form on $\R^{n+1}$.
  Let $O(1,n)$ be the set of $(n+1)\times (n+1)$ matrices preserving the indefinite form 
  $$O(1,n)=\left\{ A\in GL(n+1):   A^TSA=S, S=\left( \begin{matrix} -1 &0  \\ 0&I_n \end{matrix}  \right) \right\}.$$ 
  Let $G$ denote the identity component of $O(1,n)$, consisting of  $A\in O(1,n)$ with $\det(A)=1$ and $a_{00}\ge 1$. 
  This is a $n(n+1)/2$ dimensional manifold,  $X\in \g$ if  and only $X^TS+SX =0$. So
    $$\g=\left\{\left( \begin{matrix}0 &\xi^T  \\ \xi&S\end{matrix}  \right), \xi\in \R^n, S\in \so(n) \right\}.$$ 
      The map  $\sigma(A)=SAS^{-1}$ defines an evolution on $G$
  and  $$M=\{x=(x^0, x^1, \dots, x^n): F(x,x)=-1, x^0\ge 1\}$$ is a symmetric space.
Its isometry group at $e_0=(1, 0, \dots, 0)^T$ is $H=\left\{ \left( \begin{matrix} 1 &0  \\ 0&B \end{matrix}  \right)\right\}$ where $B\in SO(n)$. The $-1$ eigenspace of the Lie algebra involution is:
$$\m=\left\{\left( \begin{matrix} 0 &\xi^T  \\ \xi&0 \end{matrix}  \right) \right\}.$$
Let us take $Y_0\in \m$ and consider the equation,
$$dg_t^\epsilon={1\over \sqrt\epsilon} \sum_{1\le i<j\le n} A_{i,j}^*(g_t^\epsilon)\circ db_t^{i,j} +Y_0^*(g_t^\epsilon)dt.$$
It is clear that Theorem \ref{convergence-and-limit} and  \ref{Homogeneous} apply.  
The isotropy representation of $H$ is irreducible. The action of $\Ad_H$ on $\m$ is essentially the action of $SO(n)$ on $\R^n$:
$$\Ad( \left( \begin{matrix} 1 &0  \\ 0&B \end{matrix}  \right))\left( \begin{matrix} 0 &\xi^T  \\ \xi&0 \end{matrix}  \right)
=\left( \begin{matrix} 0 & (B\xi)^T  \\B \xi&0 \end{matrix}  \right).$$
The conclusion of Theorems hold. The symmetric space $M$ is naturally reductive and so Proposition \ref{projection} applies.
Note that
$$\left[\left( \begin{matrix} 0 &\xi^T  \\ \xi&0 \end{matrix}  \right), \left( \begin{matrix} 0 &\eta^T  \\ \eta&0 \end{matrix}  \right)\right]
=\left( \begin{matrix} 0 &0 \\ 0&  \xi\eta^T-\eta  \xi^T \end{matrix}  \right).$$
Since $e_ie_j^T=E_{ij}$, $[\m,\m]$ generates a basis of $\h$. 
The effective process arising from the family of stochastic differential equations (\ref{1}) is a scaled horizontal Brownian motion. Observe that the latter satisfies the one step H\"ormander condition. The limit of $\pi(g_{t\over \epsilon}^\epsilon)$ is a scaled hyperbolic Brownian motion. The rate of convergence stated in Theorem \ref{thm-limit-2} is valid here, the scale is ${8\over n(n-1)}$.

\end{example}

\section{Further discussions and  open questions}
\label{section11}
The study of scaling limits should generalise to principal bundles  and  to foliated manifolds. 
Let  $\pi:P\to M$ be a principal bundle with group action $H$ whose Lie algebra $\h$ is given a suitable inner product. For $A\in \h$ denote by $A^*$ the corresponding fundamental vertical vector field: $A^*(u)={d\over dt}|_{t=0} u\exp(tA)$. 
Let us fix an orthonormal basis  $\{A_1, \dots, A_{p}\}$ of $\h$. 
Let $\{\sigma_j, j=p+1, \dots, n\}$ be  smooth horizontal sections of $TP$, $f_0$ a vertical vector field,
$\{ w_t^1, \dots, w_t^{N_1}, b_t^1, \dots, b_t^{N_2}\}$ independent one dimensional Brownian motions.
Let  $\{a_j^k, c_i^l\}$ be  a family of smooth functions on $P$.   For example, a computation analogous to that in Lemma \ref{lemma5.2}, should lead to a system of SDEs of Markovian type, whose solutions are slow and fast motions, and the following proposition.
\begin{proposition}
Let $u_t\equiv \phi_t(u_0)$ be a solution to
\begin{equation}
\label{sde10}
{\begin{aligned}
du_t=&(\sigma_0+f_0)(u_t)dt+\sum_{k=1}^{N_1} \sum_{j=p+1}^n (c_k^j \sigma_j)(u_t) \circ dw_t^k
+ \sum_{l=1}^{N_2} \sum_{j=1}^p \left( a_l^jA_j^*\right)(u_t)\circ db_t^l.
\end{aligned}}
\end{equation}
Set $x_t=\pi(u_t)$ and denote by $(\tilde x_t)$ its horizontal lift. Then $u_t=\tilde x_ta_t$ where
\begin{equation*}
{\begin{aligned}
d\tilde x_t=&(R_{a_t^{-1}})_*\sigma_0(\tilde x_t)dt +\sum_{k=1}^{N_2} \sum_{j=p+1}^n c_k^j (\tilde x_t a_t) (R_{a_t^{-1}})_*(\sigma_j)(\tilde x_t) \circ dw_t^k\\
da_t=&TR_{a_t}\left(\varpi_{\tilde x_t} (f_0)\right)dt +\sum_{l=1}^{N_1} \sum_{j=1}^p a_l^j(\tilde x_ta_t)A_j^*(a_t)\circ db_t^l.
\end{aligned}}
\end{equation*}
 \end{proposition}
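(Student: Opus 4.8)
The plan is to mimic the proof of Lemma \ref{lemma5.2} almost verbatim, since the structure of the equation \eqref{sde10} is exactly that of \eqref{3-1} with the vertical part $Y_0^v$ replaced by $\sigma_0+f_0$ and the horizontal sections $Y_k^h$ replaced by $c_k^j\sigma_j$. First I would define $\tilde g_t := \tilde x_t a_t$ and, using the product rule on a principal bundle (page 66 of Kobayashi--Nomizu \cite{Kobayashi-NomizuI}),
\[
d\tilde g_t = dR_{a_t}\circ d\tilde x_t + \bigl(a_t^{-1}\circ da_t\bigr)^*(\tilde g_t),
\]
where the Stratonovich convention is used throughout. Substituting the stated equations for $d\tilde x_t$ and $da_t$ and invoking the identity $dR_{a}\bigl((R_{a^{-1}})_*\sigma_j\bigr)(\tilde x) = \sigma_j(\tilde x a)$ for horizontal sections, together with the corresponding cancellation of the term $TR_{a_t}(\varpi_{\tilde x_t}(f_0))$ against the fundamental vector field contribution from $(a_t^{-1}\circ da_t)^*$, one recovers \eqref{sde10}. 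Pathwise uniqueness for \eqref{sde10} (the coefficients are smooth) then forces $u_t = \tilde x_t a_t$ up to the minimum of the two lifetimes.

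Next I would verify that $(\tilde x_t)$ is genuinely a horizontal lift of $x_t=\pi(u_t)$, i.e.\ that $\varpi(\circ d\tilde x_t)=0$ and $\pi(\tilde x_t)=x_t$. Horizontality is immediate from the equation for $d\tilde x_t$, whose driving vector fields $(R_{a_t^{-1}})_*\sigma_j$ are horizontal by construction. For the projection, apply $d\pi$ to both the equation for $u_t$ and the equation for $\tilde x_t$ and use the equivariance $d\pi\bigl((R_{a^{-1}})_*\sigma_j(\tilde x)\bigr) = d\pi(\sigma_j(\tilde x a))$ (which holds because $\pi$ is $H$-invariant and $\sigma_j$ is horizontal), plus $d\pi(f_0)=0$ and $d\pi(A_j^*)=0$ for vertical fields; this shows $\pi(u_t)$ and $\pi(\tilde x_t)$ solve the same SDE on $M$ with the same initial value, hence agree. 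Finally, since through each initial frame there is a unique horizontal lift with the same lifetime as $x_t$ (Shigekawa \cite{Shigekawa82}, Darling \cite{Darling-thesis}), the lift produced here must be \emph{the} horizontal lift, and a no-explosion comparison argument as in part (2) of Lemma \ref{lemma5.2} identifies all three lifetimes.

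The one genuinely new ingredient compared with Lemma \ref{lemma5.2} is the non-invariant horizontal sections: there $Y_k^h = c_k^i X_i^*$ were built from left-invariant vector fields on a Lie group, whereas here the $\sigma_j$ are merely smooth horizontal sections of $TP$ and there is no global frame. The main obstacle will therefore be to handle the $\Ad$-type conjugation cleanly: one must express the pushforward of a horizontal section under $R_{a}$ in a way that is compatible with the Ehresmann connection and makes the cancellation of the $\varpi_{\tilde x_t}(f_0)$ term transparent. Concretely, I expect to need the relations $\varpi_{ua^{-1}}\bigl((R_{a^{-1}})_*w\bigr)=\Ad(a)\varpi_u(w)$ and the horizontality of $(R_{a^{-1}})_*\sigma_j$, exactly the two facts used in part (3) of Lemma \ref{lemma5.2}; once these are in place the computation is routine. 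The statement is flagged as heuristic (``should lead to'', ``for example''), so it suffices to indicate this reduction to the already-proven Lemma \ref{lemma5.2} rather than to write out every Stratonovich correction term.
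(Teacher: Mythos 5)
Your outline is the right one, and the paper itself does not prove this Proposition — it appears in \S\ref{section11} only as an expected consequence of ``a computation analogous to that in Lemma~\ref{lemma5.2}'', which is exactly the reduction you sketch. The treatment of the horizontal sections $\sigma_0, \sigma_j$, of the $b_t^l$-diffusion term, the pathwise-uniqueness step, and the Shigekawa--Darling lifetime comparison all carry over verbatim using the two facts you list (horizontality of $(R_{a^{-1}})_*\sigma_j$ and the equivariance $\varpi_{ua}\circ TR_a=\Ad(a^{-1})\varpi_u$).

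One warning, though, on the $f_0$ drift: carrying out the Lemma~\ref{lemma5.2}(3) computation does not land exactly on the printed formula. Applying $\varpi_{\tilde x_t}$ to $d\tilde x_t = TR_{a_t^{-1}}\circ du_t + \bigl(a_t\circ d(a_t^{-1})\bigr)^*(\tilde x_t)$ and using $\varpi_{ua}\circ TR_a = \Ad(a^{-1})\varpi_u$ gives $TR_{a_t^{-1}}da_t = \Ad(a_t)\varpi_{u_t}(f_0(u_t))\,dt + \cdots$, hence the $da_t$ drift is $TL_{a_t}\bigl(\varpi_{u_t}(f_0(u_t))\bigr)$ with $u_t=\tilde x_t a_t$. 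Since $TR_{a_t}\bigl(\varpi_{\tilde x_t}(f_0(\tilde x_t))\bigr) = TL_{a_t}\Ad(a_t^{-1})\varpi_{\tilde x_t}(f_0(\tilde x_t)) = TL_{a_t}\varpi_{u_t}\bigl(TR_{a_t}f_0(\tilde x_t)\bigr)$, the stated drift agrees with the computed one if and only if $f_0(ua)=TR_a f_0(u)$, i.e.\ $f_0$ is right-$H$-equivariant — an assumption not listed in the Proposition. So when you flesh this out, either add that hypothesis or rewrite the drift as $TL_{a_t}\varpi_{u_t}(f_0(u_t))$; your remark that ``once these are in place the computation is routine'' is a little too confident on this one term, precisely at the $\Ad$-conjugation you flagged as the main obstacle.
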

If each $c_k^j$ vanishes identically the problem is easier  in which case we are led naturally  to random ODEs of the following type
 $$\begin{aligned}\dot y_t^\epsilon &=\sum_{k=1}^m Y_k(y_t^\epsilon) \alpha_k(z_t^\epsilon(\omega), y_t^\epsilon)\\
 y_0^\epsilon&=y_0.
 \end{aligned}$$
We may assume that the stochastic processes $(z_t^\epsilon)$ are hypoelliptic diffusions or  L\'evy processes satisfying a Birkhoff's ergodic theorem, and $\alpha_k: G\times M\to \R$ are smooth functions. The vector fields $\{Y_k\}$ are, for example, given by $Y_k(\cdot)=Y(\cdot)(e_k)$ where for each $y\in P$, $Y(y): \R^m\to T_yP$ is  isometric and $\{e_k\}$ is an orthonormal basis of $\R^m$. Letting $\epsilon$ approaches zero, we expect an {\it averaging principle} and  an effective ODE in the limit, c.f. \cite{Li-OM-1}. For the
 next step, the challenging problem is to find geometrically meaningful conditions on the coefficients so that the effective ODE is trivial, in which case one may proceed to obtain an effective diffusion on a larger time scale. We observe that the limit theorems in \cite{Li-limits} do not cover this.

\bigskip

{\bf Acknowledgment.}
I would like to acknowledge helpful conversations with 
K. Ardakov, R. L. Bryant, K. D. Elworthy, M. Hairer, J. Rawnsley and D. Rumynin.
 I also like  to thank  the Institute for Advanced Studies for its hospitality during my visit in 2014.

 \bibliographystyle{plain}
 \bibliography{Hopf}

\def\dbar{\leavevmode\hbox to 0pt{\hskip.2ex \accent"16\hss}d} \def\cprime{$'$}
  \def\cprime{$'$} \def\cprime{$'$} \def\cprime{$'$} \def\cprime{$'$}
  \def\cprime{$'$} \def\cprime{$'$} \def\cprime{$'$} \def\cprime{$'$}
  \def\cprime{$'$}
\begin{thebibliography}{10}

\bibitem{Arnaudon-homogeneous}
Marc Arnaudon.
\newblock Semi-martingales dans les espaces homog\`enes.
\newblock {\em Ann. Inst. H. Poincar\'e Probab. Statist.}, 29(2):269--288,
  1993.

\bibitem{Arnold89}
V.~I. Arnol{\cprime}d.
\newblock {\em Mathematical methods of classical mechanics}.
\newblock Springer-Verlag, New York, 1989.

\bibitem{Berard-Bergery-Bourguignon}
Lionel B{\'e}rard-Bergery and Jean-Pierre Bourguignon.
\newblock Laplacians and {R}iemannian submersions with totally geodesic fibres.
\newblock {\em Illinois J. Math.}, 26(2):181--200, 1982.

\bibitem{Berger-55}
Marcel Berger.
\newblock Sur les groupes d'holonomie homog\`ene des vari\'et\'es \`a connexion
  affine et des vari\'et\'es riemanniennes.
\newblock {\em Bull. Soc. Math. France}, 83:279--330, 1955.

\bibitem{Berline-Getzler-Vergne}
Nicole Berline, Ezra Getzler, and Mich{\`e}le Vergne.
\newblock {\em Heat kernels and {D}irac operators}.
\newblock Springer-Verlag, 1992.

\bibitem{Berndt-Vanhecke}
J{\"u}rgen Berndt and Lieven Vanhecke.
\newblock Geometry of weakly symmetric spaces.
\newblock {\em J. Math. Soc. Japan}, 48(4):745--760, 1996.

\bibitem{Besse}
Arthur~L. Besse.
\newblock {\em Einstein manifolds}, volume~10.
\newblock Springer-Verlag, Berlin, 1987.

\bibitem{Bismut-Lie-group}
Jean-Michel Bismut.
\newblock The hypoelliptic {L}aplacian on a compact {L}ie group.
\newblock {\em J. Funct. Anal.}, 255(9):2190--2232, 2008.

\bibitem{Bismut-hypoelliptic-Laplacian}
Jean-Michel Bismut.
\newblock {\em Hypoelliptic {L}aplacian and orbital integrals}, volume 177 of
  {\em Annals of Mathematics Studies}.
\newblock Princeton University Press, Princeton, NJ, 2011.

\bibitem{Bismut-Lebeau}
Jean-Michel Bismut and Gilles Lebeau.
\newblock Laplacien hypoelliptique et torsion analytique.
\newblock {\em C. R. Math. Acad. Sci. Paris}, 341(2):113--118, 2005.

\bibitem{Borodin-Freidlin}
A.~N. Borodin and M.~I. Freidlin.
\newblock Fast oscillating random perturbations of dynamical systems with
  conservation laws.
\newblock {\em Ann. Inst. H. Poincar\'e Probab. Statist.}, 31(3):485--525,
  1995.

\bibitem{Burstall-Rawnsley}
Francis~E. Burstall and John~H. Rawnsley.
\newblock {\em Twistor theory for {R}iemannian symmetric spaces}, volume 1424
  of {\em LNIM}.
\newblock Springer-Verlag, 1990.

\bibitem{Cheeger-Gromov}
Jeff Cheeger and Mikhael Gromov.
\newblock Collapsing {R}iemannian manifolds while keeping their curvature
  bounded. {I}.
\newblock {\em J. Differential Geom.}, 23(3):309--346, 1986.

\bibitem{Darling-thesis}
R.~Darling.
\newblock Martingale on manifolds and geometric ito calculus.
\newblock Ph. D. thesis, The University of Warwick.

\bibitem{Dowell}
R.~M. Dowell.
\newblock {\em Differentiable Approximations to Brownian Motion on Manifolds}.
\newblock PhD thesis, University of Warwick, 1980.

\bibitem{Dynkin}
E.~B. Dynkin.
\newblock {\em Markov processes. {V}ols. {I}, {II}}, volume 122 of {\em
  Translated with the authorization and assistance of the author by J. Fabius,
  V. Greenberg, A. Maitra, G. Majone.}
\newblock Academic Press Inc., 1965.

\bibitem{Eells-Elworthy76}
J.~Eells and K.~D. Elworthy.
\newblock Stochastic dynamical systems.
\newblock In {\em Control theory and topics in functional analysis ({I}nternat.
  {S}em., {I}nternat. {C}entre {T}heoret. {P}hys., {T}rieste, 1974), {V}ol.
  {III}}, pages 179--185. Internat. Atomic Energy Agency, Vienna, 1976.

\bibitem{Elworthy-book}
K.~D. Elworthy.
\newblock {\em Stochastic differential equations on manifolds}, volume~70 of
  {\em London Mathematical Society Lecture Note Series}.
\newblock Cambridge University Press, Cambridge, 1982.

\bibitem{Elworthy-LeJan-Li-book}
K.~D. Elworthy, Y.~Le~Jan, and Xue-Mei Li.
\newblock {\em On the geometry of diffusion operators and stochastic flows},
  volume 1720 of {\em Lecture Notes in Mathematics}.
\newblock Springer-Verlag, 1999.

\bibitem{Elworthy-LeJan-Li-book-2}
K.~David Elworthy, Yves Le~Jan, and Xue-Mei Li.
\newblock {\em The geometry of filtering}.
\newblock Frontiers in Mathematics. Birkh\"auser Verlag, Basel, 2010.

\bibitem{Emery}
Michel {\'E}mery.
\newblock {\em Stochastic calculus in manifolds}.
\newblock Universitext. Springer-Verlag, Berlin, 1989.
\newblock With an appendix by P.-A. Meyer.

\bibitem{Freidlin}
Mark Freidlin.
\newblock {\em Markov processes and differential equations: asymptotic
  problems}.
\newblock Lectures in Mathematics ETH Z\"urich. Birkh\"auser Verlag, Basel,
  1996.

\bibitem{Freidlin-Weber}
Mark Freidlin and Matthias Weber.
\newblock On stochasticity of solutions of differential equations with a small
  delay.
\newblock {\em Stoch. Dyn.}, 5(3):475--486, 2005.

\bibitem{Freidlin-Wentzell-93}
Mark~I. Freidlin and Alexander~D. Wentzell.
\newblock Diffusion processes on graphs and the averaging principle.
\newblock {\em Ann. Probab.}, 21(4):2215--2245, 1993.

\bibitem{Fukaya87}
Kenji Fukaya.
\newblock Collapsing of {R}iemannian manifolds and eigenvalues of {L}aplace
  operator.
\newblock {\em Invent. Math.}, 87(3):517--547, 1987.

\bibitem{Gonzales-Gargate-Ruffino}
Ivan~I. Gonzales-Gargate and Paulo~R. Ruffino.
\newblock An averaging principle for diffusions in foliated spaces.
\newblock {\em Ann. Probab.}, 44(1):567--588, 2016.

\bibitem{Gorbatsevich-Onishchik-Vinberg}
V.~V. Gorbatsevich, A.~L. Onishchik, and E.~B. Vinberg.
\newblock {\em Foundations of {L}ie theory and {L}ie transformation groups}.
\newblock Springer-Verlag, Berlin, 1997.

\bibitem{Khasminskii66}
R.~Z. Has{\cprime}minskii.
\newblock Stochastic processes defined by differential equations with a small
  parameter.
\newblock {\em Teor. Verojatnost. i Primenen}, 11:240--259, 1966.

\bibitem{Heintze-Ziller}
Ernst Heintze and Wolfgang Ziller.
\newblock Isotropy irreducible spaces and {$s$}-representations.
\newblock {\em Differential Geom. Appl.}, 6(2):181--188, 1996.

\bibitem{Helgason}
Sigur{\dbar}ur Helgason.
\newblock {\em Differential geometry and symmetric spaces}.
\newblock Academic Press, New York-London, 1962.

\bibitem{Hogele-Ruffino}
Michael H\"ogele and Paulo Ruffino.
\newblock Averaging along foliated {L}\'evy diffusions.
\newblock {\em Nonlinear Anal.}, 112:1--14, 2015.

\bibitem{Holm-Marsden-Ratiu-Weinstein}
Darryl Holm, Jerrold Marsden, Tudor Ratiu, and Alan Weinstein.
\newblock Stability of rigid body motion using the energy-{C}asimir method.
\newblock In {\em Fluids and plasmas: geometry and dynamics}, pages 15--23.
  Amer. Math. Soc., 1984.

\bibitem{Hormander-acta}
Lars H{\"o}rmander.
\newblock Hypoelliptic second order differential equations.
\newblock {\em Acta Math.}, 119:147--171, 1967.

\bibitem{Hormander-book}
Lars H{\"o}rmander.
\newblock {\em The analysis of linear partial differential operators. {I}},
  volume 256.
\newblock Springer-Verlag, Berlin, second edition, 1990.
\newblock Distribution theory and Fourier analysis.

\bibitem{Ikeda-Ogura}
Nobuyuki Ikeda and Yukio Ogura.
\newblock A degenerating sequence of {R}iemannian metrics on a manifold and
  their {B}rownian motions.
\newblock In {\em Diffusion processes and related problems in analysis, {V}ol.\
  {I}}, volume~22 of {\em Progr. Probab.}, pages 293--312. Birkh\"auser Boston,
  1990.

\bibitem{Ikeda-Watanabe}
Nobuyuki Ikeda and Shinzo Watanabe.
\newblock {\em Stochastic differential equations and diffusion processes}.
\newblock North-Holland Publishing Co., second edition, 1989.

\bibitem{Iwahori}
Nagayoshi Iwahori.
\newblock On real irreducible representations of {L}ie algebras.
\newblock {\em Nagoya Math. J.}, 14:59--83, 1959.

\bibitem{Kasue-Kumura}
Atsushi Kasue and Hironori Kumura.
\newblock Spectral convergence of {R}iemannian manifolds.
\newblock {\em Tohoku Math. J. (2)}, 46(2):147--179, 1994.

\bibitem{Kobayashi}
Shoshichi Kobayashi.
\newblock {\em Transformation groups in differential geometry}.
\newblock Classics in Mathematics. Springer-Verlag, Berlin, 1995.
\newblock Reprint of the 1972 edition.

\bibitem{Kobayashi-NomizuI}
Shoshichi Kobayashi and Katsumi Nomizu.
\newblock {\em Foundations of differential geometry. {V}ol {I}}.
\newblock Interscience Publishers, a division of John Wiley \& Sons, New
  York-Lond on, 1963.

\bibitem{Kramers}
H.~A. Kramers.
\newblock Brownian motion in a field of force and the diffusion model of
  chemical reactions.
\newblock {\em Physica}, 7:284--304, 1940.

\bibitem{Kurtz70}
Thomas~G. Kurtz.
\newblock A general theorem on the convergence of operator semigroups.
\newblock {\em Trans. Amer. Math. Soc.}, 148:23--32, 1970.

\bibitem{Joan-Andreu-Juan-Pablo}
Joan-Andreu L{\'a}zaro-Cam{\'{\i}} and Juan-Pablo Ortega.
\newblock Reduction, reconstruction, and skew-product decomposition of
  symmetric stochastic differential equations.
\newblock {\em Stoch. Dyn.}, 9(1):1--46, 2009.

\bibitem{Li-averaging}
Xue-Mei Li.
\newblock An averaging principle for a completely integrable stochastic
  {H}amiltonian system.
\newblock {\em Nonlinearity}, 21(4):803--822, 2008.

\bibitem{Li-OM-1}
Xue-Mei Li.
\newblock Effective diffusions with intertwined structures.
\newblock arxiv:1204.3250, 2012.

\bibitem{Li-geodesic}
Xue-Mei Li.
\newblock Random perturbation to the geodesic equation.
\newblock {\em Annals of Probability}, 44(1):544--566, 2015.

\bibitem{Li-limits}
Xue-Mei Li.
\newblock Limits of random differential equations on manifolds.
\newblock {\em Probab. Theory Relat. Fields}, 166(3-4):659--712, 2016.
\newblock DOI 10.1007/s00440-015-0669-x.

\bibitem{Li-conservation}
Xue-Mei Li.
\newblock Perturbation of conservation laws and averaging on manifolds.
\newblock To appear: Abelsymposium 2016, 2017.

\bibitem{Liao-Wang-07}
Ming Liao and Longmin Wang.
\newblock Average under the {I}wasawa transformation.
\newblock {\em Proc. Amer. Math. Soc.}, 135(3):895--901 (electronic), 2007.

\bibitem{Malliavin78}
Paul Malliavin.
\newblock {\em G\'eom\'etrie diff\'erentielle stochastique}, volume~64 of {\em
  S\'eminaire de Math\'ematiques Sup\'erieures}.
\newblock Presses de l'Universit\'e de Montr\'eal, 1978.
\newblock Notes prepared by Dani{\`e}le Dehen and Dominique Michel.

\bibitem{M-M-O-P-R}
Jerrold~E. Marsden, Gerard Misio{\l}ek, Juan-Pablo Ortega, Matthew Perlmutter,
  and Tudor~S. Ratiu.
\newblock {\em Hamiltonian reduction by stages}, volume 1913 of {\em Lecture
  Notes in Mathematics}.
\newblock Springer, Berlin, 2007.

\bibitem{Mazzeo-Melrose}
Rafe~R. Mazzeo and Richard~B. Melrose.
\newblock The adiabatic limit, {H}odge cohomology and {L}eray's spectral
  sequence for a fibration.
\newblock {\em J. Differential Geom.}, 31(1):185--213, 1990.

\bibitem{Milnor}
John Milnor.
\newblock Curvatures of left invariant metrics on {L}ie groups.
\newblock {\em Advances in Math.}, 21(3):293--329, 1976.

\bibitem{Montgomery-Samelson}
Deane Montgomery and Hans Samelson.
\newblock Transformation groups of spheres.
\newblock {\em Ann. of Math. (2)}, 44:454--470, 1943.

\bibitem{Myers-Steenrod}
S.~B. Myers and N.~E. Steenrod.
\newblock The group of isometries of a {R}iemannian manifold.
\newblock {\em Ann. of Math. (2)}, 40(2):400--416, 1939.

\bibitem{Nelson}
Edward Nelson.
\newblock {\em Dynamical theories of {B}rownian motion}.
\newblock Princeton University Press, Princeton, N.J., 1967.

\bibitem{Nomizu54}
Katsumi Nomizu.
\newblock Invariant affine connections on homogeneous spaces.
\newblock {\em Amer. J. Math.}, 76:33--65, 1954.

\bibitem{Obata}
Morio Obata.
\newblock On subgroups of the orthogonal group.
\newblock {\em Trans. Amer. Math. Soc.}, 87:347--358, 1958.

\bibitem{Ogura-Taniguchi96}
Yukio Ogura and Setsuo Taniguchi.
\newblock A probabilistic scheme for collapse of metrics.
\newblock {\em J. Math. Kyoto Univ.}, 36(1):73--92, 1996.

\bibitem{Kohler-Papanicolaou74}
G.~C. Papanicolaou and W.~Kohler.
\newblock Asymptotic theory of mixing stochastic ordinary differential
  equations.
\newblock {\em Comm. Pure Appl. Math.}, 27:641--668, 1974.

\bibitem{Papanicolaou-Varadhan73}
G.~C. Papanicolaou and S.~R.~S. Varadhan.
\newblock A limit theorem with strong mixing in {B}anach space and two
  applications to stochastic differential equations.
\newblock {\em Comm. Pure Appl. Math.}, 26:497--524, 1973.

\bibitem{Rosenblatt}
Murray Rosenblatt.
\newblock {\em Markov processes. {S}tructure and asymptotic behavior}.
\newblock Springer-Verlag, New York-Heidelberg, 1971.
\newblock Die Grundlehren der mathematischen Wissenschaften, Band 184.

\bibitem{Selberg}
A.~Selberg.
\newblock Harmonic analysis and discontinuous groups in weakly symmetric
  {R}iemannian spaces with applications to {D}irichlet series.
\newblock {\em J. Indian Math. Soc. (N.S.)}, 20:47--87, 1956.

\bibitem{Shigekawa82}
Ichiro Shigekawa.
\newblock On stochastic horizontal lifts.
\newblock {\em Z. Wahrsch. Verw. Gebiete}, 59(2):211--221, 1982.

\bibitem{Simons}
James Simons.
\newblock On the transitivity of holonomy systems.
\newblock {\em Ann. of Math. (2)}, 76:213--234, 1962.

\bibitem{Stratonovich-rhs}
R.~L. Stratonovich.
\newblock A limit theorem for solutions of differential equations with random
  right-hand side.
\newblock {\em Theory Prob. Appl.,}, 11, 1960.
\newblock In Russian.

\bibitem{Tanno-79}
Sh{\^u}kichi Tanno.
\newblock The first eigenvalue of the {L}aplacian on spheres.
\newblock {\em T\^ohoku Math. J. (2)}, 31(2):179--185, 1979.

\bibitem{Urakawa86}
Hajime Urakawa.
\newblock The first eigenvalue of the {L}aplacian for a positively curved
  homogeneous {R}iemannian manifold.
\newblock {\em Compositio Math.}, 59(1):57--71, 1986.

\bibitem{Wolf}
Joseph~A. Wolf.
\newblock The goemetry and structure of isotropy irreducible homogeneous
  spaces.
\newblock {\em Acta Math.}, 120:59--148, 1968.

\end{thebibliography}

\appendix
\addcontentsline{toc}{section}{Appendix by Dmitriy Rumynin: `Real Peter-Weyl'   }
\includepdf[pages={1-6}]{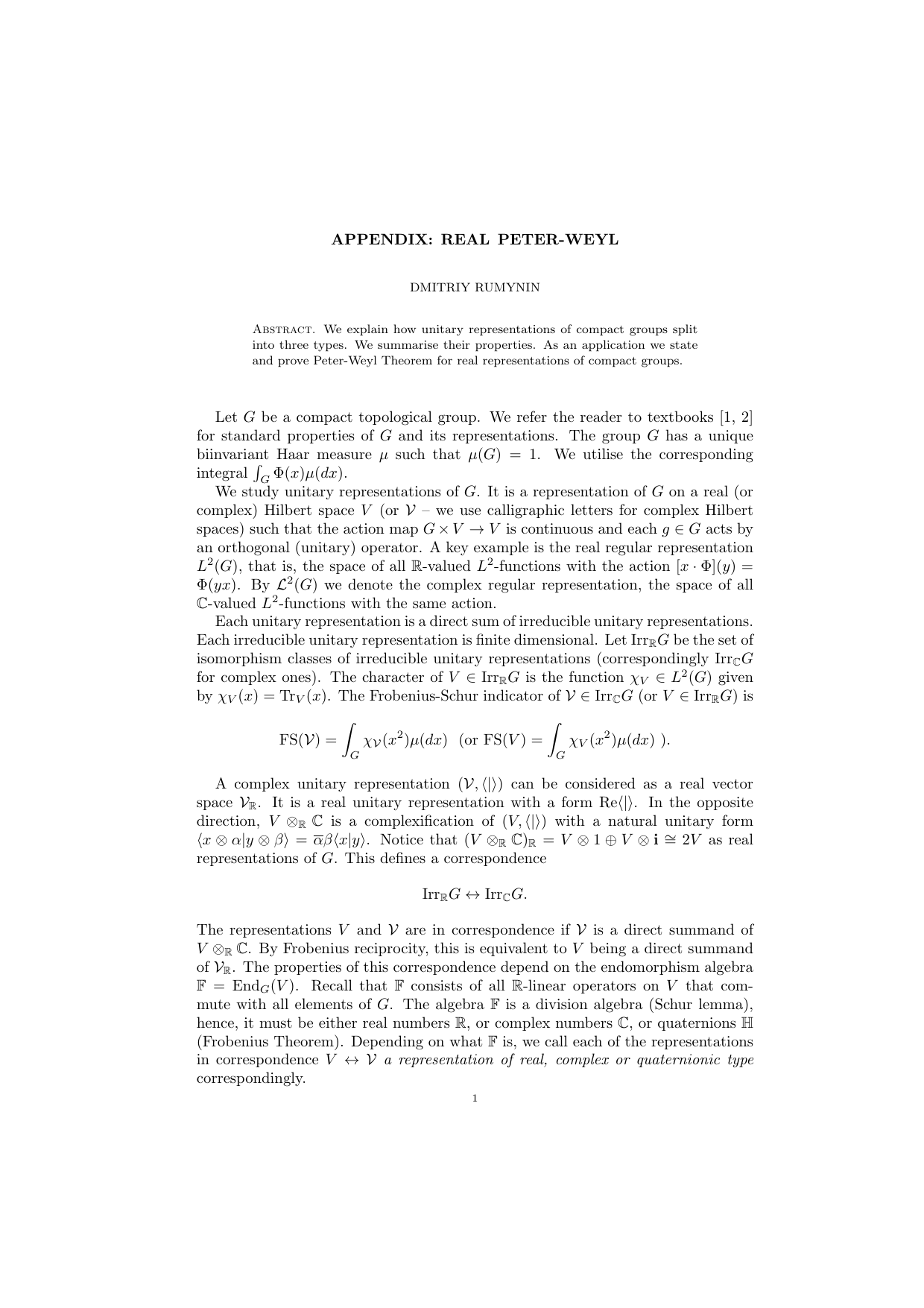}
\endappendix

\end{document}